\documentclass[10pt,twoside, a4paper,reqno]{amsart}

\usepackage{amsmath,amssymb,eucal,graphicx,mathtools,amsthm}
\usepackage{thmtools}
\usepackage[foot]{amsaddr}
\usepackage{hyperref}
\usepackage{graphicx}
\usepackage{caption}
\usepackage{subcaption}
\usepackage{listings}
\usepackage[figuresleft]{rotating}
\usepackage{lscape}
\usepackage{tikz}
\usetikzlibrary{shapes.geometric, positioning,calc, patterns,decorations.markings,backgrounds}
\usepackage{contour}
\usetikzlibrary{arrows}
\usetikzlibrary{decorations.pathmorphing}
\usepackage{tikz-cd}
\usepackage{stmaryrd}
\usepackage[capitalise]{cleveref}
\usepackage{comment}
\usepackage{verbatim}
\usepackage{tkz-euclide}
\usepackage{adjustbox}
\usepackage{mathdots}
\usepackage{circuitikz}
\usetikzlibrary{decorations.markings}
\usepackage{cite}
\usepackage{todonotes}
\usepackage{mathrsfs}
\usepackage{mathtools}
\usepackage{enumitem}
\usepackage[margin=2.5cm]{geometry}
\usepackage{varwidth}

\usepackage[all]{xy}
\xyoption{all}

\usepackage{microtype}

\mathchardef\mhyphen="2D 
\DeclareMathOperator{\mods}{mod}

\DeclareMathOperator{\Ext}{Ext}
\DeclareMathOperator{\Hom}{Hom}
\DeclareMathOperator{\RHom}{\textbf{R}Hom}

\DeclareMathOperator{\End}{End}

\DeclareMathOperator{\projdim}{pdim}
\DeclareMathOperator{\add}{add}

\DeclareMathOperator{\Gen}{Gen}

\DeclareMathOperator{\coker}{coker}

\DeclareMathOperator{\topp}{top}

\newcommand{\A}{\mathscr{A}}

\newcommand{\Ccal}{\mathcal{C}}
\newcommand{\Dcal}{\mathcal{D}}
\newcommand{\Ecal}{\mathcal{E}}
\newcommand{\Lcal}{\mathcal{L}}
\newcommand{\Rcal}{\mathcal{R}}
\newcommand{\Tcal}{\mathcal{T}}

\newcommand{\Trm}{\mathrm{T}}
\newcommand{\Prm}{\mathrm{P}}

\newcommand{\nsplit}{\textnormal{ns}}
\newcommand{\ssplit}{\textnormal{s}}

\newcommand{\Erm}{\mathrm{E}}

\counterwithin*{equation}{section}

\newtheorem{theorem}{Theorem}[section]

\newtheorem{lemma}[theorem]{Lemma}
\newtheorem{proposition}[theorem]{Proposition}

\theoremstyle{definition}
\newtheorem{definition}[theorem]{Definition}
\newtheorem{example}[theorem]{Example}
\newtheorem{remark}[theorem]{Remark}

\newcommand{\support}[1]{\color{red}}

\title[Exceptional versus $\tau$-exceptional sequences]{Exceptional versus $\tau$-exceptional sequences for the Auslander algebra of $K[x]/(x^t)$}

\author[M. Kaipel]{Maximilian Kaipel}
\address{Maximilian Kaipel, Fakultät für Mathematik, Universität Bielefeld, 33501 Bielefeld, Germany}
\email{mkaipel@math.uni-bielefeld}

\keywords{}

\begin{document}

\begin{abstract}
	For $\A_t$, the Auslander algebra of $K[x]/(x^t)$, it is shown that every complete exceptional sequence of $\A_t$-modules is a complete $\tau$-exceptional sequence. Moreover, it is established that the mutation of complete $\tau$-exceptional sequences generalises the mutation of complete exceptional sequences in the category of $\A_t$-modules.
\end{abstract}

\maketitle

\section{Introduction}

Exceptional sequences are classical objects in algebraic geometry and representation theory, which were introduced by Rudakov's school in the study of vector bundles \cite{GorodentsevRudakov,Gorodentsev1988, Rudakov90}. These sequences live in the triangulated categories arising naturally, as bounded derived categories, in these mathematical areas. They consist of exceptional objects, that is, objects without self-extensions and with simple endomorphism algebras. An exceptional sequence is an ordering of such objects so that $\Hom$ and $\Ext$ vanish in one direction. The remarkable power of an exceptional sequence is most apparent when it generates the entire ambient triangulated category, in which case the sequence is called \textit{complete}. Complete exceptional sequences induce semi-orthogonal decompositions of triangulated categories \cite{Bondal1990} and can be used to construct stability conditions on triangulated categories \cite{Macri2007}.

For these reasons, the pursuit of triangulated categories admitting complete exceptional sequences has been a long-standing endeavour for researchers since the 1970s \cite{Beilinson1979,Kapranov1988, cbw92, Kawamata2006, PrabhuNaik2017, CHS2025}. Complete exceptional sequences are also closely connected to the theory of highest weight categories and quasi-hereditary algebras \cite{Krause2017}. One particularly important example of such a quasi-hereditary algebra is the Auslander algebra $\A_t$ of $K[x]/(x^t)$ for $t \geq 1$ and $K$ an algebraically closed field. This finite dimensional algebra arises when studying actions of linear groups on flags \cite[Sec. 4]{HilleRoehrle1999} and is related to chains of $(-2)$-curves on smooth projective surfaces \cite[Sec. 3.2]{HillePloog2019b}. Tilting modules and the quasi-hereditary structure of the algebra $\A_t$ were investigated in \cite{BHRR1999} and complete exceptional sequences in $\mods \A_t$, the category of finite dimensional right $\A_t$-modules, were characterised in \cite{HillePloog2019}. The algebra $\A_t$ is also closely related to the preprojective algebra of type $A$ \cite{RingelZhang2014}. 

Inspired by the introduction of cluster algebras \cite{FZ2002} and their associated combinatorics, the theory of tilting modules over finite dimensional algebras was extended with great success using Auslander--Reiten theory \cite{AIR2014}. Within this new framework of $\tau$-tilting theory, many classical concepts from representation theory have successfully been generalised to all finite dimensional algebras. Especially the combinatorial aspects of classical tilting theory behave best for hereditary algebras, but generally break beyond this setting. Here, support $\tau$-tilting modules generalise tilting modules and resolve this issue. 

Complete exceptional sequences of modules exist for hereditary algebras \cite{cbw92} but generally fail to exist for arbitrary finite dimensional algebras. To always have complete sequences, the authors of \cite{BuanMarsh2018t} introduced $\tau$-exceptional sequences and showed that these two notions coincide for hereditary algebras. However, in contrast to the relationship between tilting and $\tau$-tilting modules, a complete exceptional sequence (if it exists) is not necessarily a complete $\tau$-exceptional sequence, see \cref{rmk:excepnottau}. In fact, essentially no relationship between the two types of sequences is known beyond hereditary algebras. The only exception are tensor algebras of hereditary path algebras with finite dimensional commutative local $K$-algebras, whose $\tau$-exceptional sequences are in bijection with those of the hereditary path algebra \cite{Nonis2025}. The first main theorem of this paper establishes such a relationship for the algebra $\A_t$.

\begin{theorem}(\cref{thm:excepistauexcep})\label{thm:introthm1}
    Every complete exceptional sequence in $\mods \A_t$ is a $\tau$-exceptional sequence. More precisely, there is an explicit bijection 
    \[ \Phi: \{ \text{basic tilting $\A_t$-modules} \} \longrightarrow \{ \text{complete exceptional sequences in $\mods \A_t$} \}, \]
    given by the bijection between TF-ordered $\tau$-tilting modules and complete $\tau$-exceptional sequences of \cite{MendozaTreffinger}.
\end{theorem}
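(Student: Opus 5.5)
The plan rests on two known structural facts about $\A_t$ and on a counting argument that compares both sides.

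\emph{Tilting side.} $\A_t$ is an Auslander algebra, hence has global dimension at most $2$. By \cite{BHRR1999}, the basic tilting $\A_t$-modules are in bijection with the symmetric group $S_t$; equivalently, they are the tilting modules of the form $T_w$ attached to $w\in S_t$, obtained from $\A_t$ by products of the idempotent ideals $I_i=\A_t(1-e_i)\A_t$. Each such module $T$ has projective dimension at most $1$, so it is support $\tau$-tilting with $|T|=t$ summands. By the Mendoza--Treffinger correspondence, every TF-ordering $(T_1,\dots,T_t)$ of $T$ then yields a complete $\tau$-exceptional sequence. The first step is to show that for tilting $T$ over $\A_t$ the TF-ordering is essentially forced. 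I would try to prove that $\Gen$ of the successive partial sums forms a maximal chain that is compatible with the $\tau$-tilting order. Then I would define $\Phi(T)$ as the sequence produced by the algorithm. Concretely, $\Phi(T)=(X_1,\dots,X_t)$ with $X_t=T_t$, and each $X_i$ is obtained by applying the $\tau$-perpendicular reduction $f_{T_{i+1}\oplus\dots\oplus T_t}$ to $T_i$.

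\emph{Identifying the terms.} The key step is to compute the terms $X_i$ explicitly and show that they form an exceptional sequence. I would use the fact that for $\A_t$ the $\tau$-perpendicular categories $J(U)$ of summands of tilting modules of projective dimension at most $1$ are equivalent to module categories $\mods \A_s$ with $s<t$, or products of such categories. This is the known reduction for idempotent quotients by $e_i$. Since $\Ext^1(U,-)=D\Hom(-,\tau U)$ up to maps factoring through injectives when $\projdim U\le 1$, for these $U$ the category $J(U)=U^{\perp}\cap{}^{\perp}\tau U$ coincides with the Geigle--Lenzing perpendicular category $U^{\perp_{0,1}}$. Each $X_i$ therefore lies in $(X_{i+1}\oplus\dots\oplus X_t)^{\perp_{0,1}}$. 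Because $\operatorname{gldim}\A_t\le 2$, I also need $\Ext^2(X_j,X_i)=0$ for $j>i$. I would prove this by induction on $t$ within the reduced category $\mods\A_s$, using that the perpendicular embedding is extension-closed and preserves $\Ext^2$ in this situation. Exceptionality of each $X_i$ (brick with no self-extensions) comes from its being a $\tau$-rigid brick in $J(U)$ of projective dimension at most $1$.

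\emph{Bijectivity.} Injectivity of $\Phi$ follows from the injectivity of the Mendoza--Treffinger map. For surjectivity I would count. Hille--Ploog \cite{HillePloog2019} characterise the complete exceptional sequences in $\mods\A_t$, and there are exactly $t!$ of them, matching the number of basic tilting modules. Alternatively, I would show directly that every complete exceptional sequence $(E_1,\dots,E_t)$ has $E_t$ $\tau$-rigid of projective dimension at most $1$ and $E_t^{\perp_{0,1}}\simeq J(E_t)$. Then induction reconstructs a tilting module. Either way, every complete exceptional sequence is in the image, so it is $\tau$-exceptional.

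\emph{Main obstacle.} I expect the hardest step to be identifying $J(U)$ with $U^{\perp_{0,1}}$ and checking the $\Ext^2$ vanishing. In general $\tau$-perpendicular and Geigle--Lenzing perpendicular categories differ, which is exactly the failure noted in \cref{rmk:excepnottau}. For that step, I would first attempt an explicit description of the indecomposable $\tau$-rigid summands of the $T_w$ as intervals (quotients of projectives $P_i/P_j$), together with their projective presentations.
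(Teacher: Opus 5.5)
\textbf{There is a genuine gap: you never fix which ordering of $T$ to feed into the Mendoza--Treffinger map, and your claim that the ordering is forced is false.} Your overall structure matches the paper's: apply the Mendoza--Treffinger map, check that the images are exceptional, then count $t!$ on both sides. But the choice of ordering is exactly what makes $\Phi$ well defined and its image exceptional.

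\emph{The TF-ordering is not forced.} A projective lying in $\Gen Q$ with $Q$ projective is a summand of some $Q^r$. Hence every one of the $t!$ orderings of $\A_t = P(t)\oplus\cdots\oplus P(1)$ is TF-ordered. Different orderings give different sequences, and most are not exceptional. For $t=2$, make $P(2)$ the $\tau$-rigid term. Then $f_{P(2)}(P(1)) = S(1)$, giving the complete $\tau$-exceptional sequence $(S(1),P(2))$ of \cref{exmp:t2}. It is not exceptional, because $\dim\End(P(2))=2$.

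\emph{Consequences for your other steps.} The same example shows that your claim that each term is a ``$\tau$-rigid brick in $J(U)$'' has no basis: indecomposable $\tau$-rigid objects of $J(U)$ need not be bricks. Likewise, $J(U)$ for a summand $U$ of a tilting module need not be of the form $\mods\A_s$ or a product of such. For instance, $P(t)$ is a summand of every tilting module, and $J(P(t))=\mods \A_t/\langle e_t\rangle$ is the module category of the preprojective algebra of type $A_{t-1}$. As written, your $\Phi$ is not well defined, and the exceptionality step would fail for most orderings.

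\textbf{The missing idea is the canonical ordering.} Order $T$ by its quasi-hereditary decomposition $T_i = e_iT\subseteq P(i)$. Then $T_1\subset T_2\subset\cdots\subset T_t=P(t)$, and $T_1=e_1T$ is the $\tau$-rigid term. Via $\End(T)\cong\A_t$, the minimal right $\add(\bigoplus_{j<i}T_j)$-approximation of $T_i$ is a monomorphism $T_{i-1}\hookrightarrow T_i$. This gives two things at once:
\begin{itemize}
\item TF-orderedness, since no map from $\add(\bigoplus_{j<i}T_j)$ onto $T_i$ can exist;
\item $f_{\bigoplus_{j<i}T_j}(T_i)\cong T_i/T_{i-1}$, so the terms of $\Phi(T)$ are explicit.
\end{itemize}

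\textbf{Exceptionality is then a dimension count, not perpendicular calculus.}
\begin{itemize}
\item One has $\dim\Hom(P(t),T_i)=i$.
\item Since $\Hom(T_{i-1},T_i/T_{i-1})=0$, there is an isomorphism $\End(T_i/T_{i-1})\cong\Hom(T_i,T_i/T_{i-1})$.
\item Using $\Ext^1(T_i,T_{i-1})=0$ and $\add T\simeq\add\A_t$, this space has dimension $\dim\Hom(P(i),P(i))-\dim\Hom(P(i),P(i-1))=1$.
\end{itemize}
Hille--Ploog's characterisation of exceptional modules (indecomposable, thin, $\dim\Hom(P(t),-)=1$) and their filtration criterion for complete exceptional sequences then finish the argument. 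This bypasses the identification $J(U)=U^{\perp_{0,1}}$ and the $\Ext^2$-vanishing you flagged as the main obstacle. With this ordering in place, your counting step is exactly the paper's.
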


With this result as a starting point it is natural to compare the mutation theories of complete exceptional sequences and complete $\tau$-exceptional sequences. Both mutation operations are defined via a mutation operation on two consequence terms in a complete ($\tau$-)exceptional sequence, so called ($\tau$-)exceptional pairs. The mutation of exceptional sequences is classical but various questions, especially regarding its transitivity remained open until very recently \cite{CHS2025}. In general, the mutation of an exceptional sequence in $\mods \A_t$ does not remain a sequence of objects in $\mods \A_t$, but rather becomes a sequence of objects in the bounded derived category $\Dcal^b(\mods \A_t)$. In contrast, the mutation of $\tau$-exceptional sequences always remains in $\mods \A_t$. Therefore, the comparison in this article restricts itself to the case where the mutation of an exceptional sequence in $\mods \A_t$ remains in $\mods \A_t$. The mutation of $\tau$-exceptional sequences was recently introduced in \cite{BHM2024} and has already inspired substantial work. Its transitivity has been established for a large class of finite dimensional algebras \cite{BHM2025} and investigations of whether the mutation of $\tau$-exceptional sequences satisfies braid group relations have yielded insightful results \cite{KaipelTerland2025, Nonis2025b}. The second main theorem establishes that the mutation of complete exceptional sequences which remains in $\mods \A_t$ is a particular case of the mutation of complete $\tau$-exceptional sequences.

\begin{theorem}(\cref{thm:mutthm})\label{thm:introthm2}
    Let $\Ecal= (E_t, \dots, E_1)$ be a complete exceptional sequence in $\mods \A_t$ and let $i \in \{2, \dots, t\}$. If the left mutation of $\Ecal$ as an exceptional sequence (at position $i$) lies in $\mods \A_t$ then it coincides with the left mutation of $\Ecal$ as a $\tau$-exceptional sequence (at position $i$).
\end{theorem}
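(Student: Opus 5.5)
Since the statement compares two mutation operations on a fixed complete exceptional sequence, I will reduce everything to a local statement about a single exceptional pair. Both mutations at position $i$ change only the pair $(E_i,E_{i-1})$ and keep the other terms. The $\tau$-exceptional mutation of \cite{BHM2024} is defined relative to a $\tau$-perpendicular category. So I first pass to the wide subcategory $\mathcal{W} = {}^{\perp}(E_{i-2}\oplus\dots\oplus E_1)$ cut out by the terms to the right, or rather its $\tau$-perpendicular analogue $J(\cdot)$ computed via \cref{thm:introthm1}. In $\mathcal{W}$, the pair $(E_i,E_{i-1})$ is a complete ($\tau$-)exceptional sequence of length two. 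Because the sequence is exceptional in the classical sense, I expect that by \cref{thm:introthm1} and the bijection $\Phi$ the relevant $\tau$-perpendicular categories coincide with the classical perpendicular categories. Concretely, I will show $J(M)=M^{\perp_{0,1}}$ for the relevant support $\tau$-rigid modules $M$, using that $\Phi$ sends a tilting module to its TF-ordered decomposition and that, for tilting summands, $\tau$-perpendicularity reduces to $\Hom$- and $\Ext^1$-perpendicularity. Then $\mathcal{W}$ is a rank-two wide subcategory equivalent to modules over a rank-two algebra.

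Next, I will analyse the rank-two situation case by case according to whether $\Hom(E_i,E_{i-1})$, or an extension $\Ext^1(E_i,E_{i-1})$, is nonzero. Classical left mutation replaces $E_i$ by the cone or cocone of the universal map $E_i\otimes\Hom(E_i,E_{i-1})\to E_{i-1}$, or by the universal extension. The hypothesis that the result lies in $\mods\A_t$ excludes the case where this is a genuine complex, that is, the non-injective, non-surjective case with both kernel and cokernel nonzero. So the mutated term is one of: the kernel of a surjective universal map, the cokernel of an injective one, the middle term of a universal extension, or just $E_{i-1}$ (with the pair swapped) when $\Hom$ and $\Ext$ both vanish. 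On the $\tau$-side, \cite{BHM2024} defines left mutation of the pair via the $\tau$-exceptional mutation. This mutation is computed by passing through the corresponding TF-ordered support $\tau$-tilting pair in $\mathcal{W}$ (by \cite{MendozaTreffinger}), performing a $\tau$-tilting mutation, and reading off the new pair, with a split/non-split dichotomy (the $\ssplit/\nsplit$ cases). I will match each classical case with one $\tau$-case by using that $\mathcal{W}$ is hereditary of rank two, so it is either $K\times K$ or the path algebra of a generalised Kronecker quiver. In both cases, \cite{BuanMarsh2018t} gives that exceptional and $\tau$-exceptional sequences coincide, and the mutations can be checked directly on the (few) exceptional modules.

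The main obstacle is the first step: justifying that the ambient perpendicular category in which BHM's mutation is computed is equivalent to a hereditary category. Only then can the rank-two coincidence be used, and only then is the $\tau$-perpendicular category of the tail the classical perpendicular category, not a larger or smaller one. I will try to derive this from \cref{thm:introthm1}: the tail $(E_{i-2},\dots,E_1)$ corresponds under $\Phi$ to a partial tilting module $T'$, and Jasso's reduction gives $J(T')\simeq \mods \End(B)/\langle e\rangle$ for a Bongartz completion $B$. For $\A_t$ these reductions should again be Auslander-type or hereditary, and I will check that the rank-two reductions arising are hereditary, using the explicit description of exceptional modules from \cite{HillePloog2019}. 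If this fails in general, the fallback is a direct computation with the explicit classification of exceptional pairs in $\mods\A_t$.
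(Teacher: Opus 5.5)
\textbf{There is a genuine gap: the reduction to a rank-two hereditary category is false.} You need the category in which the $\tau$-exceptional mutation at position $i$ is actually computed. By definition this is $J(E_{i-2},\dots,E_1)$, and there the pair $(E_i,E_{i-1})$ is not a complete sequence. The complete sequence in that category is $(E_t,\dots,E_{i-1})$, of length $t-i+2$. In fact $J(E_{i-2},\dots,E_1)\simeq\mods\A_{t-i+2}$: this follows from Jasso's reduction at the Bongartz completion of $E_1=T_1$, using $\End(T)\simeq\A_t$ and $\A_t/\langle e_1\rangle\simeq\A_{t-1}$, and then induction. That category has global dimension $2$ and is not hereditary.

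Even when the pair genuinely has rank two, $t=i=2$, the ambient category is $\mods\A_2$. It is not hereditary. It has $2$ complete exceptional but $4$ complete $\tau$-exceptional sequences. The module $\begin{smallmatrix}&2\\1&\end{smallmatrix}$ is exceptional but not $\tau$-rigid. So the dichotomy ``$K\times K$ or generalised Kronecker'' is unavailable, and so is the coincidence of exceptional and $\tau$-exceptional sequences for hereditary algebras.

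Passing to the wide subcategory generated by the pair does not help either. For $t=2$ it contains $\coker(S(2)\hookrightarrow P(1))=S(1)$, hence is all of $\mods\A_2$. Moreover, BHM's mutation is defined relative to the ambient category, through $f_C^{-1}$, $\tau$ and $\Ext$-projectives computed there. Your fallback ``direct computation'' is not an argument.

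\textbf{Your classical case analysis does not match what happens in $\mods\A_t$.} For terms of a complete exceptional sequence one always has $\dim\Hom(E_i,E_{i-1})=1=\dim\Ext^1(E_i,E_{i-1})$ and $\Ext^2(E_i,E_{i-1})=0$. So the cone always involves both parts and gives an exact sequence $0\to H^{-1}\to E_i\xrightarrow{f}E_{i-1}\to H^0\to E_i\to 0$. Nonzero $f$ is injective or surjective, so the hypothesis means exactly that $f$ is injective. Then $\Lcal_{E_i}E_{i-1}$ is an extension of $E_i$ by $\coker f$. None of your listed cases is this one.

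\textbf{On the $\tau$-side you never handle the irregular case, which is the typical one here.} Under the mutability hypothesis, left regularity forces $E_1$ to be projective, and $P(1)$ is the only projective exceptional module. So BHM's irregular formula via $\Prm_{\nsplit}(\Trm(J(f_C^{-1}(B)\oplus C)))$ has to be evaluated.

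The paper does this as follows.
\begin{enumerate}
\item It reduces to $i=2$ using $J(E_{i-2},\dots,E_1)\simeq\mods\A_{t-i+2}$, together with wideness of $J(E_{i-2},\dots,E_1)$ so that the defining sequence of $\Lcal_{E_i}E_{i-1}$ stays inside it.
\item It identifies $f_{E_1}^{-1}(E_2)=T_2=f_{E_2}^{-1}(\Lcal_{E_2}E_1)$.
\item It shows $T_2\oplus E_2$ is partial tilting with $J(T_2\oplus E_1)=J(T_2\oplus E_2)$.
\item It uses right irregularity of $(\Lcal_{E_2}E_1,E_2)$ to get $\Prm_{\nsplit}(\Trm(J(T_2\oplus E_1)))=T_2\oplus E_2$, whence $\varphi(E_2,E_1)=(f_{E_2}(T_2),E_2)=(\Lcal_{E_2}E_1,E_2)$.
\item The regular case is closed by uniqueness: complete $\tau$-exceptional sequences that differ in at most one term are equal.
\end{enumerate}
Some such computation is the missing core of your proof.
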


Tilting modules, which are in bijection with complete exceptional sequences by \cref{thm:introthm1}, have their own mutation theory, which was generalised by that of support $\tau$-tilting modules. It turns out that the mutation of tilting modules, which replaces one indecomposable summand of a tilting module by another, is the lift of the mutation of complete exceptional sequences under the inverse of $\Phi$. In other words, $\Phi$ commutes with the mutations. A similar phenomen was also observed in \cite[Sec. 9]{Geuenich2022}.

\begin{proposition}(\cref{prop:mutcomp}, simplified)\label{prop:introprop3}
	The explicit bijection $\Phi$ of \cref{thm:introthm1} is compatible with the mutation of tilting $\A_t$-modules and the mutation of complete exceptional sequences in $\mods \A_t$.
\end{proposition}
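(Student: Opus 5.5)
The plan is to make $\Phi$ explicit and then track a single mutation step on each side. Start from the bijection of \cref{thm:introthm1}. A basic tilting module $T=T_1\oplus\dots\oplus T_t$ over $\A_t$ has a TF-ordering; the projectives are the natural choice of starting point. Under the Mendoza--Treffinger construction, this ordering produces the sequence
\[ \Phi(T)=(E_t,\dots,E_1), \qquad E_j = f_{T_{<j}}(T_j). \]
Here $f_{T_{<j}}$ is the torsion-free functor: $E_j$ is the torsion-free part of $T_j$ with respect to $\Gen$ of the summands that precede it. One subtlety must be settled first: $\Phi(T)$ should depend on $T$ alone, not on the chosen TF-ordering. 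I expect this to be part of the proof of \cref{thm:introthm1}, where the TF-order for tilting $\A_t$-modules is forced up to commuting summands. If it is not, I will fix a canonical TF-order.

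Next comes the precise form of \cref{prop:mutcomp}. Mutating $T$ at a summand $T_k$ gives $\mu_k(T)=T/T_k\oplus T_k^*$, which is again tilting, since every almost complete tilting module over this algebra with a non-projective-injective complement has exactly two complements. The claim should be that $\Phi(\mu_k T)$ equals the left or right mutation of $\Phi(T)$ at the position $i$ where $T_k$ sits in the TF-order, with $T_k^*$ occupying the adjacent slot. The argument then has three steps.

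\textbf{Step 1 (reduce to two neighbouring terms).} Choose TF-orders of $T$ and $\mu_k T$ that agree outside positions $i,i-1$. For $j<i-1$ the terms $E_j$ coincide, because they only involve $T_{<j}$. For $j>i$ the terms also coincide: $\Gen(T_{<j})$ is unchanged, since $T_k$ and $T_k^*$ generate the same torsion class together with $T/T_k$. This last fact follows from the exchange sequence $0\to T_k\to B\to T_k^*\to 0$ with $B\in\add(T/T_k)$, or from the dual sequence in the other direction.

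\textbf{Step 2 (identify the pair).} This reduces the statement to a rank-two situation: inside the $\tau$-perpendicular category $J(T_{<i-1})$, which has two simples, compare the exceptional pair $(E_i,E_{i-1})$ with the pair coming from $\mu_k T$. Here the exceptional pair mutation is
\[ L(E_i,E_{i-1})=(L_{E_{i-1}}E_i,\ E_{i-1}). \]
Its new term $L_{E_{i-1}}E_i$ is the cone, or kernel/cokernel, of the universal map $\Hom(E_{i-1},E_i)\otimes E_{i-1}\to E_i$. I then apply the functor $f_{T_{<i-1}}$ to the exchange sequence. The aim is to show that this image is exactly the universal (co)evaluation sequence, using that $B$ is the minimal $\add(T/T_k)$-approximation.

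\textbf{Step 3 (direction and shift).} Finally I determine whether left or right mutation occurs, according to whether $T_k^*$ lies in $\Gen T$ or not. I also check that the resulting object is a module and not a shifted complex; this is what the hypothesis of \cref{thm:introthm2} guarantees.

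The main obstacle is Step 2: showing that $f_{T_{<i-1}}$ sends the exchange sequence to the universal evaluation sequence, including exactness and minimality after the functor is applied. My first attempt will be to use the explicit description of tilting $\A_t$-modules in terms of the symmetric group, from \cite{BHRR1999}, to reduce to finitely many local configurations of two adjacent summands. In that setting the rank-two reduced category is $K A_2$ or a small Auslander-type algebra, where both mutations can be computed by hand. If that works, \cref{thm:introthm2} then identifies this exceptional mutation with $\tau$-exceptional mutation, which gives additional confirmation.
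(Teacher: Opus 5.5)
Your existence claim for $\mu_kT$ is fine: $T/T_k$ has the faithful summand $P(t)=T_t$, so Happel--Unger gives exactly two complements. The argument built on it, however, has genuine gaps.

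\textbf{Step~1.} The stated reason is false. $T/T_k\oplus T_k$ and $T/T_k\oplus T_k^*$ never generate the same torsion class, since distinct $\tau$-tilting modules have distinct torsion classes \cite[Thm. 2.7]{AIR2014}. The partial torsion classes change as well. For $t=3$, take $T=P(3)\oplus P(2)\oplus P(1)$ and its left mutation $T'=P(3)\oplus P(2)\oplus P(2)/P(1)$. Then $\Gen(P(2)\oplus P(1))\neq\Gen P(2)$, because $P(1)$ has top $S(1)$. The terms $E_j$ for $j>i$ do coincide, but for a different reason. By \cref{lem:torfdesc}, $f_{T_{<j}}(T_j)\cong T_j/T_{j-1}$ for the \emph{quasi-hereditary} decomposition. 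Moreover $T'$ keeps $T_j$ in slot $j$, because slots are detected by $\dim\Hom(P(t),-)$ (\cref{lem:tiltdimhom}). This also shows the ordering is not ``forced''. Over $\A_2$, the order $P(1)\oplus P(2)$ is TF-ordered too and yields $(S(1),P(2))$, which is not exceptional since $\End P(2)\not\simeq K$.

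\textbf{Step~2.} This fails as designed, for three reasons.
\begin{enumerate}
\item By \cref{lem:reduction}, $J(T_{<i-1})\simeq\mods\A_{t-i+2}$, which has rank two only when $i=t$. So there is no reduction to finitely many local configurations.
\item Your universal map points the wrong way. Here $\Hom(E_{i-1},E_i)=0$, so as written your $\Lcal_{E_{i-1}}E_i$ is just $E_i$. The left mutation is $(\Lcal_{E_i}E_{i-1},E_i)$, built from $\Hom(E_i,E_{i-1})\otimes E_i\to E_{i-1}$.
\item The exchange sequence is not the evaluation sequence. For $i=2$ it is $0\to T_1\to T_2\to T_2/T_1\to 0$, i.e.\ the universal extension $0\to E_1\to T_2\to E_2\to 0$. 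By contrast, $\Lcal_{E_2}E_1\cong f_{E_2}(T_2)$ is a further quotient of the middle term (cf.\ \cref{prop:leftirregularprops}(2)).
\end{enumerate}

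\textbf{The missing idea.} No such computation is needed. Take $i=2$ and suppose $E_2\to E_1$ is injective.
\begin{itemize}
\item Since $\dim\Hom(T_2,T_1)=\dim\Hom(P(2),P(1))=1$, every map $T_2\to T_1$ is a multiple of $T_2\twoheadrightarrow T_2/T_1\hookrightarrow T_1$, hence not surjective.
\item By \cref{lem:tiltapprox}(2), this gives $T_1\notin\Gen(T/T_1)$. So the AIR left mutation exists, with exchange sequence $0\to T_1\to T_2\to T_2/T_1\to 0$.
\item The module $T'=T_t\oplus\dots\oplus T_2\oplus T_2/T_1$ is tilting, because $\projdim(T_2/T_1)\le 1$ by \cref{lem:completeexcepprop}(4).
\item $\Phi(T')$ and $\psi_2(\Phi(T))$ are both complete $\tau$-exceptional sequences (\cref{thm:excepistauexcep}, \cref{lem:excepmutdesc}). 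They agree except possibly in position~$2$, so they are equal by the uniqueness theorem \cite[Thm. 2.1(a)]{BuanMarsh2023}, \cite[Thm. 8]{HansonThomas2024}.
\end{itemize}
Conversely, suppose $E_2\to E_1$ is surjective. Then $T_1\in\Gen(T/T_1)$. Any tilting $T'\in\Gen T$ differing only at $T_1$ satisfies $\Gen T'\subseteq\Gen T=\Gen(T/T_1)\subseteq\Gen T'$, which forces $T'=T$. The case of general $i$ follows inside $J(T_{i-2}\oplus\dots\oplus T_1)\simeq\mods\A_{t-i+2}$, via \cref{lem:reduction} and \cref{lem:tiltreduction}.

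Your Step~3 asserts the left/right correspondence without supplying any of this argument. Your Step~2 is exactly the computation that the uniqueness theorem lets one bypass.
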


To summarise the results above, let $t \geq 2$ and let $i \in \{2, \dots, t\}$. \cref{thm:introthm1} establishes the top row of the diagram in \cref{fig:introfig}. Take a basic tilting $\A_t$-module $T$. If there is another tilting module $T'$ which differs from $T$ only in one direct summand, or equivalently if the mutation of its corresponding complete exceptional sequence lies in $\mods \A_t$, then  \cref{thm:introthm2} and \cref{prop:introprop3} imply that the entire commutative diagram of \cref{fig:introfig} exists. In \cref{fig:introfig}, $\mu$ denotes the mutation of tilting modules, $\psi$ denotes the mutation of exceptional sequences and $\varphi$ denotes the mutation of $\tau$-exceptional sequences. These statements hold both for left and for right mutations, because these are inverse procedures.
\begin{figure}[ht!]
\[
\begin{tikzcd}
\{\text{basic tilting $\A_t$-modules}\} \arrow[d, "\mu_{i-1}"] \arrow[r, "\Phi","\text{bij.}"'] & \left\{ \begin{varwidth}{10em} \begin{center} complete exceptional sequences in $\mods \A_t$ \end{center} \end{varwidth} \right\} \arrow[d, "\psi_i"] \arrow[r, phantom, "\subsetneq"]& \left\{ \begin{varwidth}{10em} \begin{center} complete $\tau$-exceptional sequences in $\mods \A_t$ \end{center} \end{varwidth} \right\}  \arrow[d, "\varphi_i"]  \\
\{\text{basic tilting $\A_t$-modules}\} \arrow[r, "\Phi","\text{bij.}"'] & \left\{ \begin{varwidth}{10em} \begin{center} complete exceptional sequences in $\mods \A_t$ \end{center} \end{varwidth} \right\} \arrow[r, phantom, "\subsetneq"] &  \left\{ \begin{varwidth}{10em} \begin{center} complete $\tau$-exceptional sequences in $\mods \A_t$ \end{center} \end{varwidth} \right\}
\end{tikzcd}
\]
\caption{Overview of the relationship between tilting modules, exceptional sequences and $\tau$-exceptional sequences in $\mods \A_t$ and their mutations.}
\label{fig:introfig}
\end{figure}

To illustrate this behaviour explicitly, consider the following example.

\begin{example}\label{exmp:t2}
    Consider the case $t=2$. The case $t=3$ is illustrated in \cref{sec:example}. The algebra $\A_2$ is isomorphic to the bound path algebra
    \[ K \left( \begin{tikzcd}
        1 \arrow[r, "a", shift left, bend left] & 2 \arrow[l, "b", shift left, bend left]
    \end{tikzcd} \right) / \langle ab \rangle .\]
    The algebra $\A_2$ has 5 indecomposable modules up to isomorphism, all of which are uniserial. This makes it a Nakayama algebra, which allows for a simple calculation of its support ($\tau$-)tilting modules and ($\tau$-)exceptional sequences. The left mutation $\varphi$ of $\tau$-exceptional sequences for Nakayama algebras has been described in simple terms in \cite[Thm. 1.3]{BKT2025}. In \cref{fig:introfigex}, the tilting $\A_2$-modules are the subset of all support $\tau$-tilting modules highlighted in {\color{purple} \textbf{purple}} and the complete exceptional sequences in $\mods \A_2$ are the subset of all complete $\tau$-exceptional sequences also highlighted in {\color{purple} \textbf{purple}}. 
    \begin{figure}[ht!]
    \[
    \begin{tikzcd}[ampersand replacement=\&, column sep=5]
        \& {\color{purple}\boldsymbol{\begin{smallmatrix}&2\\1&\\&2\end{smallmatrix} \oplus \begin{smallmatrix}1&\\&2\end{smallmatrix}}} \arrow[ld, thick, purple, "\mu_1",swap] \arrow[rd] \\
        {\color{purple}\boldsymbol{\begin{smallmatrix}&2\\1&\\&2\end{smallmatrix} \oplus \begin{smallmatrix}2\end{smallmatrix}}} \arrow[d] \& \& \begin{smallmatrix}1\end{smallmatrix} \oplus \begin{smallmatrix}1&\\&2\end{smallmatrix} \arrow[d]\\
        \begin{smallmatrix}2\end{smallmatrix} \arrow[rd] \& \& \begin{smallmatrix}1\end{smallmatrix} \arrow[ld] \\
        \& \begin{smallmatrix}0 \end{smallmatrix} 
    \end{tikzcd}
    \qquad  \qquad
    \begin{tikzcd}[ampersand replacement=\&, column sep=5]
        \& {\color{purple}\boldsymbol{\left(\begin{smallmatrix}2\end{smallmatrix}, \begin{smallmatrix}1&\\&2\end{smallmatrix} \right)}} \arrow[ld, "\varphi_2"', "\psi_2", thick, purple] \\
        {\color{purple}\boldsymbol{\left(\begin{smallmatrix}&2\\1&\end{smallmatrix}, \begin{smallmatrix}2\end{smallmatrix} \right)}} \arrow[rd, "\varphi_2",swap] \& \& \left(\begin{smallmatrix}1&\\&2\end{smallmatrix}, \begin{smallmatrix}1\end{smallmatrix} \right) \arrow[lu, "\varphi_2",swap]\\
        \& \left(\begin{smallmatrix}1\end{smallmatrix}, \begin{smallmatrix}&2\\1&\\&2\end{smallmatrix} \right) \arrow[ru, "\varphi_2",swap]
    \end{tikzcd}
    \]
    \caption{Hasse diagram of support $\tau$-tilting modules (left) with tilting modules highlighted in {\color{purple}\textbf{purple}} and left mutation graph of complete $\tau$-exceptional sequences (right) with exceptional sequences highlighted in {\color{purple}\textbf{purple}}.}
    \label{fig:introfigex}
    \end{figure}
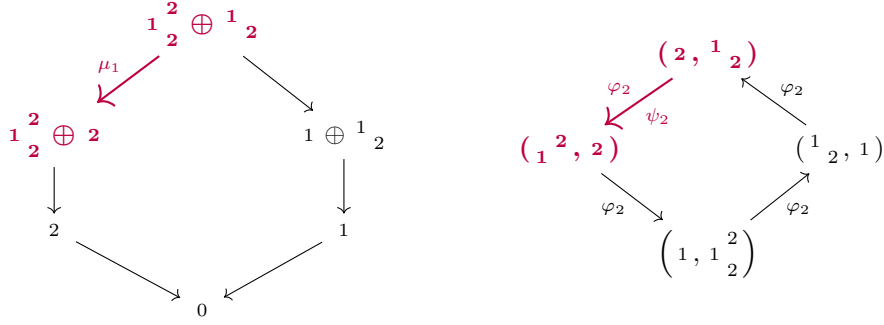
\end{example}

The structure of this paper is as follows: \cref{sec:prelim} recalls the preliminaries of ($\tau$-)tilting theory and collects some special properties of tilting $\A_t$-modules. Exceptional and $\tau$-exceptional sequences are also defined there. \cref{sec:mainthm1} is relatively short and culminates in the proof \cref{thm:introthm1}. \cref{sec:mutation} then introduces and compares the three different kinds of mutations, ultimately proving \cref{thm:introthm2} and \cref{prop:introprop3}. The final \cref{sec:example} illustrates the developed results for $\A_3$. 

\subsection*{Acknowledgements}
The author thanks Håvard U. Terland for sharing a QPA programme with him, which significantly helped the author's calculations of complete $\tau$-exceptional sequences and their mutations. The author is grateful to Iacopo Nonis for carefully reading an earlier version of this manuscript, which helped to improve the quality of this paper. The author acknowledges support from the Deutsche Forschungsgemeinschaft (DFG, German Research Foundation) as part of SFB-TRR 191 (Project ID 281071066) and as part of SFB-TRR 358/1 2023 (Project ID 491392403).

\section{Preliminaries}\label{sec:prelim}
Throughout, let $K$ be an algebraically closed field and, for a natural number $t \geq 1$, let $\A_t$ denote the Auslander algebra of $K[x]/(x^t)$. Note that all results are trivial for $t=1$. In other words, let $\A_t = \End(\bigoplus_{i=1}^t M(i))$, where $M(i)$ is the isomorphism class of the indecomposable $K[x]/(x^t)$-module of length $i$. The algebra $\A_t$ is isomorphic to the bound path $K$-algebra of the quiver
\[ \begin{tikzcd}
    1 \arrow[r, "a_1", shift left] & 2 \arrow[r, "a_2", shift left] \arrow[l, "b_1", shift left] & \dots \arrow[r, "a_{t-1}", shift left] \arrow[l, "b_2", shift left] & t \arrow[l,"b_{t-1}", shift left]
\end{tikzcd}, \quad \text{with relations}\quad  a_1 b_1= 0, a_{i+1}b_{i+1}= b_i a_i, \text{ for } 1 \leq i \leq t-2.\]

As an Auslander algebra of a representation finite algebra, the algebra $\A_t$ is of global dimension at most 2 \cite[Thm. 4.6]{Auslander1974b}. The category of finite dimensional right $\A_t$-modules is denoted by $\mods \A_t$ and given $M \in \mods \A_t$, the number of isomorphism classes of indecomposable direct summands of $M$ is denoted by $|M|$. There is a sequence of inclusions of indecomposable projective $\A_t$-modules
\[ P(1) \subset P(2) \subset \dots \subset P(t) \] 
which induces a (unique) quasi-hereditary structure on $\A_t$. In this paper, various special families of $\A_t$-modules play central roles. This section introduces these and collects their most relevant properties.

\begin{definition}\label{defn:tilting}
    Let $T \in \mods \A_t$. Then $T$ is called \textit{partial tilting} if $\Ext^1(T,T) = 0$ for $i \geq 1$ and $\projdim T \leq 1$. It is called \textit{tilting} if additionally $|T|=t$.
\end{definition}

For the algebra $\A_t$, it turns out that studying its maximal ideals provides an insightful perspective on its tilting modules. Consider the maximal ideal $I_i \coloneqq \langle 1-e_i \rangle \subseteq \A_t$, where $\{e_i\}_{i=1}^t$ is a complete set of primitive orthogonal idempotents of $\A_t$ and let $\langle I_1, \dots, I_{t-1} \rangle$ denote the semigroup generated multiplicatively by the maximal ideals $\{I_i\}_{i=1}^{t-1} \subseteq \A_t$. 

\begin{theorem}\cite[Thm. 1.1(1)]{IyamaZhang2020}\label{thm:idealsemigroup}
    The set of basic tilting $\A_t$-modules is given by $\langle I_1, \dots, I_{t-1} \rangle$. 
\end{theorem}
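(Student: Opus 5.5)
The plan is to model the argument on the treatment of tilting ideals over preprojective algebras by Iyama--Reiten and Buan--Iyama--Reiten--Scott. Three facts carry most of the weight. First, the primitive idempotent $e_t$ corresponds to the projective-injective module $P(t)$. Second, $\A_t/I_i \cong K$ is the simple module $S(i)$. Third, $I_i$ contains $e_t$ for $i \leq t-1$, so $P(t)$ is always a summand of $I_i$.

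In a first step I will show that every finite product $I_{i_1}\cdots I_{i_k}$ with $i_j \in \{1,\dots,t-1\}$ is a basic tilting module. It suffices to prove the inductive claim that, for a tilting ideal $T \subseteq \A_t$ arising this way, the ideal $T I_i$ is again tilting. The argument will have two cases.
\begin{itemize}
\item If $T \otimes_{\A_t} S(i) = 0$, equivalently $T I_i = T$, there is nothing to prove.
\item Otherwise I expect $T I_i \cong T \otimes^{\mathbf{L}}_{\A_t} I_i$. Then $T I_i$ is the image of $T$ under the derived autoequivalence $-\otimes^{\mathbf{L}} I_i$, and so it is tilting. To justify this I will use the exact sequence $0 \to I_i \to \A_t \to S(i) \to 0$. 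From it, $\mathrm{Tor}_1(T,S(i))$ vanishes exactly when $T I_i \cong T\otimes I_i$ and $\mathrm{Tor}_2(T,S(i))$ vanishes automatically.
\end{itemize}
The base case is that $I_i$ itself is tilting. I will check this directly: $\projdim S(i) = 2$ for $i<t$, so $\projdim I_i \leq 1$; $\Ext^1(I_i,I_i)=0$ follows from $\Ext^j(S(i),\A_t)=0$ for $j\leq 1$, using that $S(i)$ has dominant dimension considerations coming from $P(t)$; and $I_i$ has $t$ non-isomorphic summands. I will also observe that $I_i^2 = I_i$ and that the braid relations $I_iI_{i+1}I_i = I_{i+1}I_iI_{i+1}$ and $I_iI_j=I_jI_i$ for $|i-j|\geq 2$ hold. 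The semigroup is then a quotient of the $0$-Hecke monoid of $S_t$, and hence has at most $t!$ elements.

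In a second step I will show that the map $w \mapsto I_w$, taking a reduced expression of $w \in S_t$ to the corresponding product, is injective. For this I will compare dimension vectors, or the classes of $I_w$ in the Grothendieck group, which transform by the reflection action of $s_i$ under the derived equivalence. That produces exactly $t!$ distinct basic tilting ideals.

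Finally I will invoke the count of Brüstle--Hille--Ringel--Röhrle \cite{BHRR1999} that $\A_t$ has exactly $t!$ basic tilting modules. Together with the first two steps, this shows the semigroup exhausts all basic tilting modules.

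The main obstacle is the $\mathrm{Tor}_1$-vanishing in the inductive step: showing that whenever $T I_i \neq T$ one has $\mathrm{Tor}_1(T,S(i))=0$, or equivalently that $\ell(s_iw) > \ell(w)$ forces the product to behave derivedly. I would first try to reduce this to preprojective algebras. The idea is that $\A_t/\langle e_t\rangle$ is the preprojective algebra of type $A_{t-1}$, where this is known, and that the projective-injective summand $P(t)$ is shared by all the modules involved and so can be split off.
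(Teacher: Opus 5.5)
The paper gives no proof here; the statement is quoted from Iyama--Zhang. So your proposal has to stand on its own. As written it has a genuine gap, exactly where you locate the ``main obstacle'': the dichotomy ``$TI_i=T$ or $\mathrm{Tor}_1(T,\A_t/I_i)=0$'' is only expected. Both of your steps rest on it. Step~1 needs it directly, and Step~2 needs it because computing $[I_w]$ via the reflection action requires being in the derived case at every letter of a reduced word.

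The surrounding framework is fine. Right ideals have projective dimension at most $1$ because $\A_t$ has global dimension $2$, $\mathrm{Tor}_2(T,\A_t/I_i)=0$, and $TI_i\cong T\otimes^{\mathbf L}I_i$ if and only if $\mathrm{Tor}_1(T,\A_t/I_i)=0$.

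The reduction you suggest, however, does not work as sketched:
\begin{itemize}
\item $\Pi=\A_t/\langle e_t\rangle$ is the preprojective algebra of Dynkin type $A_{t-1}$, hence self-injective. So the images $\bar I_i$ are not tilting and $-\otimes^{\mathbf L}_\Pi\bar I_i$ is not an autoequivalence.
\item Even granting some multiplicativity over $\Pi$, base change along $\A_t\to\Pi$ is not derived-clean. Already $\mathrm{Tor}_1^{\A_t}(I_i,\Pi)\cong\mathrm{Tor}_2^{\A_t}(S(i),\Pi)\cong\mathrm{soc}(e_i\Pi)\neq 0$.
\item Splitting off $P(t)$ does not help, since $\langle e_t\rangle\subseteq T$ meets every summand $e_jT$.
\end{itemize}

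The missing idea is short and stays inside $\A_t$. With $D=\Hom_K(-,K)$ one has $\mathrm{Tor}_j(T,\A_t/I_i)\cong D\Ext^j(T,S(i))$. For any classical tilting module $T$, $(\Gen T,T^\perp)$ is a torsion pair with $\Gen T=\{M:\Ext^1(T,M)=0\}$. A simple module is either torsion or torsion-free, so one of two things holds:
\begin{itemize}
\item $\Hom(T,S(i))=0$, i.e.\ $TI_i=T$; or
\item $\Ext^1(T,S(i))=0$, i.e.\ $TI_i\cong T\otimes^{\mathbf L}I_i$.
\end{itemize}

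For Step~2, first note that $\dim(T/TI_i)-\dim\mathrm{Tor}_1(T,\A_t/I_i)$ is the coefficient of $[P(i)]$ in $[T]$. Write $[T]=\sum_j c_jq_j$ with $q_j=[P(j)]-[P(j-1)]$; then this coefficient is $c_i-c_{i+1}$. Since $[e_iI_i]=[P(i-1)]+[P(i+1)]-[P(i)]$, the functor $-\otimes^{\mathbf L}I_i$ swaps $q_i$ and $q_{i+1}$. Also $[\A_t]=\sum_j(t-j+1)q_j$.

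Now induct along a reduced word. Each step swaps an adjacent pair with $c_i>c_{i+1}$, so $TI_i\neq T$ and the dichotomy puts you in the derived case. Hence the classes $[I_w]$ are the $t!$ distinct rearrangements of $(t,\dots,1)$. Together with \cref{thm:BHRRcount} this finishes the proof. The braid relations, which you assert without proof, are not even needed.

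Finally, the base case. The vanishing of $\Ext^{0,1}(S(i),\A_t)$ is what gives $\End(I_i)\cong\A_t$ via left multiplication. You need this, though you do not state it, for $-\otimes^{\mathbf L}I_i$ to be an autoequivalence. By contrast, $\Ext^1(I_i,I_i)\cong\Ext^2(S(i),I_i)$ vanishes for a different reason: $\Ext^1(S(i),S(i))=0$, and $\Ext^2(S(i),\A_t)\to\Ext^2(S(i),S(i))$ is an isomorphism of one-dimensional spaces.
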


For the algebra $\A_t$, many properties of its tilting modules can be deduced from properties of its projective modules as a consequence of the following theorem.

\begin{theorem}\cite[Thm. 4]{BHRR1999}\label{thm:tiltingendo}
    For any basic tilting $\A_t$-module $T$, there is an isomorphism $\End(T) \simeq \A_t$. 
\end{theorem}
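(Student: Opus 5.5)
Since this statement is quoted from \cite{BHRR1999}, the plan is to recover it from \cref{thm:idealsemigroup}. Every basic tilting $\A_t$-module is then, as a right module, a product $T = I_{j_1} I_{j_2} \cdots I_{j_\ell}$ of maximal ideals $I_j = \langle 1-e_j\rangle$ with $j \in \{1,\dots,t-1\}$. I would argue by induction on the length $\ell$ of such a word. The base case $\ell=0$ is $T=\A_t$, where $\End(\A_t)\simeq \A_t$ holds trivially.

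For the inductive step, write $T = T' I_j$ with $T'$ a tilting module given by a shorter word, so that $\End(T')\simeq \A_t$ by induction. The key observation is that for $j \le t-1$ the module $I_j$ is itself a tilting module. It is the APR-type (or BB-type) tilting module $(1-e_j)\A_t \oplus \Omega^{-1}$-like replacement of $P(j)$. More precisely, $e_j I_j = \operatorname{rad}$-part of $e_j\A_t$ with the top $S(j)$ removed, and $S(j)$ has projective dimension $\le 2$ with $\Ext^1(S(j),S(j))=0$, since there are no loops. In addition, $\Ext^2(S(j),\A_t)=0$ for $j\neq t$, because $P(t)$ is the unique projective-injective. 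In the spirit of Iyama--Zhang, the ideal $I_j$ is then a two-sided tilting complex, and the derived functor $-\otimes^{\mathbf L}_{\A_t} I_j$ is an autoequivalence of $\Dcal^b(\mods \A_t)$.

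The central step is to show that for $T' I_j$ tilting one has $T'\otimes^{\mathbf L}_{\A_t} I_j \simeq T' I_j$, that is, the higher Tor groups vanish and the multiplication map is injective. Granting this, we get
\[\End(T'I_j)\simeq \End_{\Dcal^b}(T'\otimes^{\mathbf L} I_j)\simeq \End_{\Dcal^b}(T') \simeq \A_t,\]
because the derived tensor with a two-sided tilting complex is an equivalence. For the identification I would use \cite[Thm.~1.1]{IyamaZhang2020}, which I expect also states that products along reduced words satisfy $T'\otimes^{\mathbf L} I_j\simeq T'I_j$. This is analogous to the preprojective situation of Buan--Iyama--Reiten--Scott.

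The main obstacle is this Tor-vanishing, or equivalently that $I_j$ is a two-sided tilting bimodule whose endomorphism ring on both sides is $\A_t$. The first approach I would try is to verify directly that $\End_{\A_t}(I_j)\simeq \A_t$ via left multiplication. This should hold because $\Hom(S(j),\A_t)=0$ and $\Ext^1(S(j),\A_t)$ is one-dimensional, which gives $\Hom(I_j,\A_t)\simeq \A_t$ after applying $\Hom(-,\A_t)$ to $0\to I_j\to \A_t\to S(j)\to 0$. Note that the socles of the $P(i)$ involve only $S(t)$, which is what makes $\Hom(S(j),\A_t)=0$.

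If the word-based induction runs into trouble, I would fall back on a second route. By \cref{thm:idealsemigroup}, $\End(T)$ is the idealizer $\{a\in\A_t : aT\subseteq T\}$, which contains $\A_t$ acting by left multiplication. It then suffices to check faithfulness and to compare dimensions, which are fixed by the Cartan invariants since tilting preserves the Grothendieck group up to the derived equivalence.
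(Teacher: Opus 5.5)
The paper gives no argument of its own here (it only cites \cite{BHRR1999}), so your proposal has to stand on its own. As written, it has genuine gaps.

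\textbf{The main route.} The step carrying all the weight, $T'\otimes^{\mathbf L}_{\A_t} I_j\simeq T'I_j$, is only granted. It is also not equivalent to $I_j$ being a two-sided tilting complex: it depends on $T'$, and it is false for an arbitrary factorisation $T=T'I_j$. Indeed $I_jI_j=I_j$, whereas $I_j\otimes^{\mathbf L}I_j\simeq I_j$ would give $I_j\simeq\A_t$ after applying the inverse of the equivalence $-\otimes^{\mathbf L}I_j$, which sends $\A_t$ to $I_j$; but $\dim I_j=\dim\A_t-1$.

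Since $\A_t/I_j\cong S(j)$ and $\projdim T'\le 1$, the exact condition is $\operatorname{Tor}_1(T',\A_t/I_j)\cong D\Ext^1(T',S(j))=0$, where $D=\Hom_K(-,K)$. The missing idea is a dichotomy. The module $T'\otimes_{\A_t}\A_t/I_j=T'/T'I_j$ is dual to $\Hom(T',S(j))$. Because $T'$ is tilting, the simple module $S(j)$ lies either in $\Gen T'=\{X:\Ext^1(T',X)=0\}$ or in $(T')^{\perp}$. Hence either $T'I_j=T'$ and there is nothing to show, or the Tor term vanishes.

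Your justification that $-\otimes^{\mathbf L}I_j$ is an autoequivalence also rests on false facts. For $j<t$ one has $\projdim S(j)=2$, so $\Ext^2(S(j),\A_t)\neq 0$; in general, if $\projdim M=n<\infty$ then $\Ext^n(M,\A_t)\neq0$. Moreover $\Ext^1(S(j),\A_t)=0$, not one-dimensional. Were it one-dimensional, your exact sequence would give $\dim\Hom(I_j,\A_t)=\dim\A_t+1$, contradicting the conclusion you draw from it.

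\textbf{The correct input and the fallback route.} What you need is the grade-two property that comes with dominant dimension at least $2$. The module $P(t)\cong D(\A_te_t)$ is the unique indecomposable projective-injective, and there is an exact sequence $0\to\A_t\to I^0\to I^1$ with $I^0,I^1\in\add P(t)$. If $Me_t=0$, then $\Hom(M,P(t))\cong D(Me_t)=0$. Left exactness of $\Hom(M,-)$ and injectivity of $I^0$ then give $\Hom(M,\A_t)=0=\Ext^1(M,\A_t)$.

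This is exactly what your fallback route lacks, and with it that route becomes a complete short proof, making the derived-equivalence induction unnecessary. By \cref{thm:idealsemigroup}, $T$ is a two-sided ideal containing $e_t$, as observed in the proof of \cref{lem:tiltdimhom}, so $(\A_t/T)e_t=0$. Apply $\Hom(-,\A_t)$ to $0\to T\to\A_t\to\A_t/T\to 0$. This shows that restriction $\A_t=\Hom(\A_t,\A_t)\to\Hom(T,\A_t)$ is bijective, i.e.\ every morphism $T\to\A_t$ is left multiplication by a unique element of $\A_t$. Since $T$ is a left ideal, every such map sends $T$ into $T$. So left multiplication $\A_t\to\End(T)$ is an algebra isomorphism, which is precisely the form used in \cref{lem:tiltapprox}.

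In your version of this route the emphasis is reversed. The idealizer of a two-sided ideal is trivially all of $\A_t$. The real content is that $\End(T)$ equals the idealizer, which you assert without proof. The dimension comparison via Cartan invariants is neither justified nor needed.

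If you prefer to keep the main route, it can also be repaired: $I_j$ is tilting by \cref{thm:idealsemigroup}, and $\End(I_j)\cong\A_t$ holds by the case $T=I_j$ of the argument above. Combined with the dichotomy, the induction then closes, but it is only a detour.
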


This result can also be found in \cite[Thm. 3.6]{IyamaZhang2020}. The following enumeration of basic tilting $\A_t$-modules is a key ingredient in the proof of \cref{thm:introthm1}.

\begin{theorem}\cite[Thm. 3]{BHRR1999}\label{thm:BHRRcount}
    There is a bijection between the symmetric group $\mathfrak{S}_t$ and the set of basic tilting $\A_t$-modules. Consequently, there are $t!$ basic tilting $\A_t$-modules.
\end{theorem}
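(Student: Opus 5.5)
The plan is to read off the count from \cref{thm:idealsemigroup}: the basic tilting $\A_t$-modules are exactly the elements of the multiplicative semigroup $\langle I_1, \dots, I_{t-1} \rangle$, where the empty product $\A_t$ is included. I would show that this semigroup is a copy of the $0$-Hecke (Coxeter) monoid of $\mathfrak{S}_t$, with $I_i$ corresponding to the simple transposition $s_i=(i\ i+1)$. That monoid has exactly $t!$ elements, in bijection with $\mathfrak{S}_t$ via reduced words. The argument has two halves: a surjection from $\mathfrak{S}_t$ onto the semigroup, then injectivity.

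\textbf{Surjectivity.} I would verify the defining relations of the $0$-Hecke monoid for the ideals $I_i$.
\begin{enumerate}
\item[(a)] \emph{Idempotence}, $I_i^2 = I_i$. This holds because $I_i = \A_t(1-e_i)\A_t$ is generated by an idempotent.
\item[(b)] \emph{Distant commutation}, $I_iI_j = I_jI_i$ for $|i-j|\geq 2$. I would check this on the quiver, using that there is no arrow between $i$ and $j$. Concretely, a path through neither $e_i$ nor $e_j$ can be factored either way.
\item[(c)] \emph{Braid relation}, $I_iI_{i+1}I_i = I_{i+1}I_iI_{i+1}$. I would describe both sides as spans of paths and use the commutativity relations $a_{i+1}b_{i+1} = b_ia_i$. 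The point is that any path factoring through vertices $\neq i$, $\neq i+1$, $\neq i$ in succession can be rewritten to factor through $\neq i+1$, $\neq i$, $\neq i+1$. This is the standard argument for preprojective-type relations.
\end{enumerate}
Given (a)--(c), Matsumoto's theorem (any two reduced words for the same permutation are connected by braid moves) shows that $I_w \coloneqq I_{i_1}\cdots I_{i_\ell}$ is independent of the choice of reduced word $w = s_{i_1}\cdots s_{i_\ell}$. Idempotence shows that every product of generators equals some $I_w$. This yields a surjection $\mathfrak{S}_t \to \langle I_1,\dots,I_{t-1}\rangle$, $w \mapsto I_w$.

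\textbf{Injectivity.} I expect this to be the main obstacle. I would reduce to the preprojective algebra $\Pi$ of type $A_{t-1}$, for which $\A_t/\langle e_t\rangle \cong \Pi$ (cf.\ \cite{RingelZhang2014}). The image of $I_w$ in $\Pi$ is the ideal $\Pi$-$I_w$, and by Mizuno's classification of support $\tau$-tilting $\Pi$-modules these ideals are pairwise distinct for distinct $w \in \mathfrak{S}_t$. Hence the $I_w$ are pairwise distinct already in $\A_t$.

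If I wanted to avoid that citation, I would instead prove directly that $I_wI_i \subsetneq I_w$ whenever $\ell(ws_i) > \ell(w)$. This would go by comparing dimension vectors of $\A_t/I_w$, showing they grow strictly and are determined by $w$, for instance via the inversion set of $w$. Either route gives the bijection $\mathfrak{S}_t \to \{\text{basic tilting } \A_t\text{-modules}\}$, and hence the count $t!$.
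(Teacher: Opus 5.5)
\textbf{Verdict.} Your argument is correct in substance, but it takes a different route from the paper. The paper gives no argument for this statement and simply quotes \cite{BHRR1999}, where the count arises from the classification of $\Delta$-filtered modules without self-extensions over the quasi-hereditary algebra $\A_t$.

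\textbf{Your route.} You derive the count from the ideal-semigroup description in \cref{thm:idealsemigroup}. You show that $\langle I_1,\dots,I_{t-1}\rangle$ is a quotient of the $0$-Hecke monoid of $\mathfrak{S}_t$. You then separate its elements through $\A_t/\langle e_t\rangle\cong\Pi$, the preprojective algebra of type $A_{t-1}$ (immediate from the presentation), together with Mizuno's bijection $w\mapsto I_w$ for $\Pi$.

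\textbf{The relations hold.} For $|i-j|\geq 2$ one gets $I_iI_j=I_i\cap I_j=I_jI_i$. For $1\leq i\leq t-2$, both $I_iI_{i+1}I_i$ and $I_{i+1}I_iI_{i+1}$ equal the span of all paths other than $e_i,e_{i+1},a_i,b_i$. The only memberships that need the relations are $b_ia_i=a_{i+1}b_{i+1}$ and $a_ib_i=b_{i-1}a_{i-1}$ (resp.\ $a_1b_1=0$). The first of these is exactly where $i+1\leq t-1$ is used.

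\textbf{What each approach buys.} Your approach gives an explicit bijection $w\mapsto I_w$ that fits the ideal-theoretic picture the paper relies on later, e.g.\ in \cref{lem:tiltdimhom}. The cost is that it rests on two deep external results. It is legitimate inside the paper, where \cref{thm:idealsemigroup} is quoted as an independent input. It is only a standalone proof of the count if that theorem is established without using the count.

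\textbf{Three steps to tighten.}

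First, reducing an arbitrary word to a reduced one needs the exchange condition, not idempotence alone. If $\ell(ws_i)<\ell(w)$, choose a reduced expression of $w$ ending in $s_i$. Then $I_wI_i=I_{ws_i}I_i^2=I_{ws_i}I_i=I_w$, and you can induct on word length.

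Second, to count \emph{isomorphism classes} you need $I_w\not\cong I_{w'}$, not merely $I_w\neq I_{w'}$ as subsets of $\A_t$. This does follow from your reduction. Since $e_t\in I_w$, one has $\langle e_t\rangle\subseteq I_w$ and $I_w\langle e_t\rangle=\langle e_t\rangle$. So the image of $I_w$ in $\Pi$ is $I_w/\langle e_t\rangle\cong I_w\otimes_{\A_t}\Pi$, which is functorial in $I_w$. Mizuno's bijection is onto isomorphism classes, so $I_w\not\cong I_{w'}$ follows. Alternatively, read \cref{thm:idealsemigroup} as a bijection onto isomorphism classes.

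Third, your fallback for injectivity does not work as stated. Showing $I_wI_i\subsetneq I_w$ whenever $\ell(ws_i)>\ell(w)$ only separates elements that are comparable in the weak order. You would need an invariant of $I_w$ that actually recovers $w$, for instance the family of dimension vectors of the $e_j(\A_t/I_w)$.
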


Importantly, there are also $t!$ complete exceptional sequence in $\mods \A_t$ \cite{HillePloog2019} and these bijections are also discussed in \cite[Sec. 9]{Geuenich2022}. Generalised tilting complexes in the bounded homotopy category of $\A_t$ are characterised in \cite{Geuenich2022, Sauter2019}.

\begin{definition}
	Let $E \in \mods \A_t$, then $E$ is called \textit{exceptional} if $\Ext^i(E,E) = 0$ for $i \geq 1$ and $\End(E) \simeq K$. A sequence $(E_r, \dots, E_1)$ of exceptional $\A_t$-modules is called an \textit{exceptional sequence} if $\Hom(E_i, E_j) = 0 = \Ext^{\ell}(E_i, E_j)$ for $1 \leq i < j \leq r$. It is called a complete exceptional sequence if $r=t$.
\end{definition}

The goal of this paper is to compare these classical notions with modern adaptations in the context of $\tau$-tilting theory \cite{AIR2014}. A red thread commonly seen throughout $\tau$-tilting theory uses the Auslander--Reiten translation $\tau$ in $\mods \A_t$ to replace $\Ext^1(!,?)$ vanishing conditions by $\Hom(?, \tau(!))$ vanishing conditions.

\begin{definition}\cite{AIR2014}
    Let $T \in \mods \A_t$, then $T$ is called \textit{$\tau$-rigid} if $\Hom(T, \tau T) = 0$ and it is called \textit{$\tau$-tilting} if additionally $|T|=t$. A $\tau$-rigid module $T$ is called \textit{support $\tau$-tilting} if it is $\tau$-tilting in $\A_t/\langle e \rangle$ for some idempotent $e \in \A_t$. A pair $(T,P)$ of $\A_t$-modules is called \textit{$\tau$-rigid} if $T$ is $\tau$-rigid and $\Hom(P,T) =0$. Sometimes a $\tau$-rigid pair $(T,P)$ is denoted by $T \oplus P[1]$. 
\end{definition}

It is a well-known fact that partial tilting objects are $\tau$-rigid, which makes it immediate that tilting objects are $\tau$-tilting.

\begin{lemma}\label{lem:tiltistau}
    Every partial tilting $\A_t$-module is $\tau$-rigid. Conversely, every $\tau$-rigid module of projective dimension at most 1 is partial tilting.
\end{lemma}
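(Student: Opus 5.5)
The plan is to use the Auslander--Reiten formulas. Both directions hinge on the comparison between $\Ext^1(T,-)$ and $D\Hom(-,\tau T)$, and the standard tool is the formula $\Ext^1(M,N) \simeq D\overline{\Hom}(N,\tau M)$, valid for arbitrary finite dimensional algebras. Here $\overline{\Hom}$ denotes morphisms modulo those factoring through injective modules. Together with the refinement: if $\projdim M \leq 1$, then $\Ext^1(M,N) \simeq D\Hom(N,\tau M)$, with no quotient needed. I will invoke both of these from \cite{AIR2014} or the standard Auslander--Reiten theory literature.

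For the first statement, let $T$ be partial tilting, so $\projdim T \leq 1$ and $\Ext^1(T,T)=0$. Applying the refined formula with $M = N = T$ gives
\[ 0 = \Ext^1(T,T) \simeq D\Hom(T,\tau T), \]
hence $\Hom(T,\tau T)=0$ and $T$ is $\tau$-rigid. The only point needing care is the justification of the refined formula. Take a minimal projective presentation $0 \to P_1 \to P_0 \to T \to 0$ and apply the Nakayama functor $\nu$. Then $\tau T = \ker(\nu P_1 \to \nu P_0)$, because $\projdim T\le 1$. Using $\Hom(N,\nu P) \simeq D\Hom(P,N)$, the left exactness of $\Hom(N,-)$ identifies $\Hom(N,\tau T)$ with $D\coker(\Hom(P_0,N)\to\Hom(P_1,N)) = D\Ext^1(T,N)$.

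For the converse, let $T$ be $\tau$-rigid with $\projdim T \leq 1$. The same isomorphism gives $\Ext^1(T,T) \simeq D\Hom(T,\tau T) = 0$, so $T$ is partial tilting. Alternatively, one can use the general formula, since $\overline{\Hom}(T,\tau T)$ is a quotient of $\Hom(T,\tau T)=0$; this avoids the refinement altogether in this direction.

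One point needs a remark. \cref{defn:tilting} phrases the vanishing as $\Ext^1(T,T)=0$ ``for $i\geq1$''. Since $\projdim T\le 1$, $\Ext^i(T,-)=0$ for $i\ge2$ automatically, so only $\Ext^1$ matters and the two readings agree. The only genuinely nontrivial step is the projective-dimension-one Auslander--Reiten formula. I expect no real obstacle, as it is a standard computation via the Nakayama functor, and I would cite it, e.g.\ from Assem--Simson--Skowroński, rather than reprove it.
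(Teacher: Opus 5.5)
Your proof is correct and matches the paper's: the paper simply cites the projective-dimension-one Auslander--Reiten formula \cite[Cor. IV.4.7]{ARS1995}, which gives $\Ext^1(T,T)=0 \iff \Hom(T,\tau T)=0$ when $\projdim T\le 1$, and that is exactly the isomorphism you use; your Nakayama-functor sketch of it is also sound. One small correction: $\tau T=\ker(\nu P_1\to\nu P_0)$ holds for any minimal projective presentation, and what $\projdim T\le 1$ actually provides is the injectivity of $P_1\to P_0$, which is what makes $\coker(\Hom(P_0,N)\to\Hom(P_1,N))$ equal to $\Ext^1(T,N)$.
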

\begin{proof}
    If $\projdim T \leq 1$ then $\Ext^1(T,T)=0$ is equivalent to $\Hom(T, \tau T)=0$ by \cite[Cor. IV.4.7]{ARS1995}.
\end{proof}

By \cite[Thm. 0.5]{AIR2014}, associated to every $\tau$-rigid module $T$ in $\mods \A_t$ are a torsion class $\Gen T$ and its corresponding torsion-free class $T^\perp$, which are full subcategories defined by
\[ \Gen T \coloneqq \{ M \in \mods \A_t: \exists T^r \twoheadrightarrow M \to 0 \} \]
\[ T^\perp \coloneqq \{ M \in \mods \A_t: \Hom(T,M) = 0\}. \]  

Together, they form a torsion pair $(\Gen T, T^\perp)$ in $\mods \A_t$ which comes with two associated maps of objects $t_T: \mods A \to \mods A$ and $f_T: \mods \A_t \to \mods \A_t$, see \cite{Dickson66}. These maps are such that for every $M \in \mods A$ there is a unique (up to isomorphism) short exact sequence
\[ 0 \to t_T (M) \to M \to f_T (M) \to 0\]
satisfying $t_T (M) \in \Gen T$ and $f_T (M) \in T^\perp$. The full subcategory $^\perp T$ is defined dually to $T^\perp$. This leads to the \textit{$\tau$-perpendicular} subcategory $J(T) \coloneqq T^\perp \cap {}^\perp \tau T$. If $(T,P)$ is $\tau$-rigid, set $J(T,P) \coloneqq T^\perp \cap {}^\perp \tau T \cap P^\perp$. Then $J(T,P)$ is an extension-closed full subcategory of $\mods \A_t$ and abelian, whence it has an Auslander--Reiten translation and $\tau$-rigid modules which are possibly distinct from those in $\mods \A_T$, see \cite{Jasso2015}. 

\begin{remark}\label{rmk:notationjustification}
    Let $T$ be $\tau$-rigid. The map $f_T(-)$ defines a bijection 
    \begin{align*} \{ \text{indecomposable } M \in \mods \A_t : M \oplus T \text{ is $\tau$-rigid in $\mods \A_t$ and } M \not \in \Gen T \} \\
    \to \{\text{indecomposable } N \in J(T) : N \text{ is $\tau$-rigid in $J(T)$}\}
     \end{align*}
     by \cite[Prop. 4.5]{BuanMarsh2018t}. By a slight abuse of notation, the inverse assignment of this bijection is denoted $f_T^{-1}(-)$. This is used heavily in \cref{sec:mutation}, where this notation is not used in any other context than to refer to this bijection.
\end{remark}

The observation that $\tau$-rigid modules in $J(T)$ may not necessarily be $\tau$-rigid in $\mods \A_t$ is fundamental in the following recursive definition. 
\begin{definition}\cite[Def. 1.3]{BuanMarsh2018t}
	A sequence $(M_r, \dots, M_1)$ of indecomposable $\A_t$-modules is called \textit{$\tau$-exceptional} if $M_1$ is $\tau$-rigid in $\mods \A_t$ and $(M_r, \dots, M_{2})$ is a $\tau$-exceptional sequence in $J(M_1)$. The sequence is called \textit{complete} if $r=t$.
\end{definition}

In particular, the modules $M_r, \dots, M_2$ need not be $\tau$-rigid in $\mods \A_t$. To understand the relationship between these sequences and $\tau$-rigid modules requires the following definition.

\begin{definition}
    Let $T$ be a $\tau$-rigid module and $T_r \oplus \dots \oplus T_1$ be an ordered decomposition of $T$ into indecomposable direct summands. Then $T_r \oplus \dots \oplus T_1$ is called a \textit{TF-ordered decomposition} or a \textit{TF-ordered} $\tau$-rigid module if $T_i \not \in \Gen (\bigoplus_{j=1}^{i-1} T_j)$ for $2 \leq i \leq r$.
\end{definition}

Using the bijection of \cref{rmk:notationjustification} enables the passage between these ordered decompositions and $\tau$-exceptional sequences.

\begin{theorem}\cite[Thm. 5.1]{MendozaTreffinger}\label{thm:MendozaTreff}
    Let $T = T_r \oplus \dots \oplus T_1$ be a TF-ordered $\tau$-rigid module. Then 
    \[ \Phi(T) = \left(f_{\bigoplus_{i=1}^{r-1}(T_i)} T_r ,\dots,f_{T_1} (T_2), T_1 \right)\]
    is a $\tau$-exceptional sequence of length $r$. Moreover, every $\tau$-exceptional sequence of length $r$ arises this way. 
\end{theorem}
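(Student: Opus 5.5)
The plan is to argue by induction on $r$, peeling off $T_1$ and passing to the $\tau$-perpendicular category $J(T_1)$ via Jasso's reduction \cite{Jasso2015}. The bijection of \cref{rmk:notationjustification} is the basic tool. For $r=1$ there is nothing to show, since $T_1$ is $\tau$-rigid in $\mods \A_t$ and $\Phi(T_1)=(T_1)$.

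For the inductive step, let $T=T_r\oplus\dots\oplus T_1$ be TF-ordered and write $U=T_r\oplus\dots\oplus T_2$. By the TF-condition no $T_i$ with $i\geq 2$ lies in $\Gen T_1$. Each $T_i\oplus T_1$ is $\tau$-rigid, so \cref{rmk:notationjustification} shows that $f_{T_1}(T_i)$ is indecomposable and $\tau$-rigid in $J(T_1)$. I would then invoke the stronger form of Jasso's reduction. It says that $f_{T_1}$ induces a bijection between basic $\tau$-rigid modules $T_1\oplus U$ with no summand of $U$ in $\Gen T_1$, and basic $\tau$-rigid modules of $J(T_1)$. Hence $\bar U:=f_{T_1}(T_r)\oplus\dots\oplus f_{T_1}(T_2)$ is $\tau$-rigid in $J(T_1)$. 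Under this bijection the torsion class of $J(T_1)$ generated by $f_{T_1}(V)$, for $V$ a direct summand of $U$, is $\Gen(T_1\oplus V)\cap T_1^\perp$. In addition, $J_{J(T_1)}(f_{T_1}V)=J(T_1\oplus V)$.

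From these two facts I would derive the key compatibility. For $V=T_{i-1}\oplus\dots\oplus T_2$ and $M=T_i$,
\[ f^{J(T_1)}_{f_{T_1}(V)}\bigl(f_{T_1}(M)\bigr)\;\simeq\; f_{T_1\oplus V}(M). \]
To prove this, take the canonical sequences $0\to t_{T_1}M\to M\to f_{T_1}M\to 0$ and $0\to t'\to f_{T_1}M\to f'\to 0$, the latter in $J(T_1)$. The kernel of the composite $M\to f'$ is an extension of $t'\in\Gen(T_1\oplus V)$ by $t_{T_1}M\in\Gen T_1$, so it lies in the torsion class $\Gen(T_1\oplus V)$. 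Moreover $f'\in (T_1\oplus V)^\perp$, so uniqueness of the canonical sequence gives the claim.

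The same description of torsion classes shows that $\bar U$ is TF-ordered in $J(T_1)$. Indeed, if $f_{T_1}T_i\in\Gen^{J(T_1)}(f_{T_1}T_{i-1}\oplus\dots\oplus f_{T_1}T_2)$, then $f_{T_1\oplus V}(T_i)=0$, which forces $T_i\in\Gen(T_{i-1}\oplus\dots\oplus T_1)$, a contradiction. Applying the induction hypothesis inside the abelian category $J(T_1)$ to $\bar U$, and rewriting each entry with the compatibility above, yields exactly that $(f_{\bigoplus_{i<r}T_i}T_r,\dots,f_{T_1}T_2)$ is $\tau$-exceptional in $J(T_1)$. Appending $T_1$ then gives that $\Phi(T)$ is $\tau$-exceptional.

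For surjectivity, start from a $\tau$-exceptional sequence $(M_r,\dots,M_1)$ and set $T_1=M_1$. By induction in $J(M_1)$ there is a TF-ordered $\tau$-rigid $\bar U$ in $J(M_1)$ with $\Phi_{J(M_1)}(\bar U)=(M_r,\dots,M_2)$. Lift $\bar U$ summandwise by $f^{-1}_{T_1}$ and order the lifts accordingly. The lifted module is $\tau$-rigid together with $T_1$ by Jasso's bijection, and TF-ordered by running the argument above backwards. The compatibility identity then recovers the original sequence. The main obstacle is the identification of torsion classes in $J(T_1)$ with $\Gen(T_1\oplus V)\cap T_1^\perp$, together with $J_{J(T_1)}(f_{T_1}V)=J(T_1\oplus V)$. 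I would take both from \cite{Jasso2015}, and from \cite{BuanMarsh2018t} for the version that allows summands of the reduction to lie outside $\Gen$.
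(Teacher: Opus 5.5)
Your argument is correct, but it is organised differently from the route the paper points to.

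The paper does not prove this statement itself. It quotes Mendoza--Treffinger and names the identity $J(T_2\oplus T_1)=J^{J(T_1)}(f_{T_1}T_2)$, together with its multi-term version (Buan--Marsh, Buan--Hanson), as the essential input. With that identity the recursive definition flattens. For the forward direction one only needs each $f_{T_1\oplus\dots\oplus T_{i-1}}(T_i)$ to be indecomposable and $\tau$-rigid in $J(T_1\oplus\dots\oplus T_{i-1})$. This is exactly the bijection of \cref{rmk:notationjustification} applied to $T_1\oplus\dots\oplus T_{i-1}$, and the TF condition is precisely its hypothesis $T_i\notin\Gen(T_1\oplus\dots\oplus T_{i-1})$. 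Surjectivity follows by lifting successively $T_i=f^{-1}_{T_1\oplus\dots\oplus T_{i-1}}(M_i)$, so the TF-order comes for free.

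You instead peel off only $T_1$ and move the whole remaining module into $J(T_1)$. In place of associativity of $J$ you use associativity of torsion-free parts, $f^{J(T_1)}_{f_{T_1}V}(f_{T_1}M)\simeq f_{T_1\oplus V}(M)$, and your canonical-sequence argument proves this in an elementary way.

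\textbf{Unneeded ingredients.}
\begin{itemize}
\item The identity $J^{J(T_1)}(f_{T_1}V)=J(T_1\oplus V)$ is never used in your proof, because the definition of $\tau$-exceptional sequences is already recursive in $J(M_1)$.
\item The torsion-class identification can be replaced by the trivial inclusion $t'\in\Gen(f_{T_1}V)\subseteq\Gen V$.
\end{itemize}
So what you call the main obstacle is not actually needed.

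\textbf{What you genuinely import.}
\begin{itemize}
\item The multi-summand form of Jasso's reduction: $\tau$-rigidity in $J(T_1)$ of $\bigoplus_{i\geq 2}f_{T_1}T_i$, and of summandwise lifts. This follows from Jasso's bijection on support $\tau$-tilting modules combined with the indecomposable bijection.
\item Running the induction over all finite-dimensional algebras. Here $J(T_1)\simeq\mods C$ for an algebra $C$ that is in general not of the form $\A_s$; \cref{lem:reduction} only covers the last term of a complete exceptional sequence.
\end{itemize}

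Your route avoids the deeper $J$-associativity theorem, at the cost of an induction and a separate check of TF-order in the converse direction. The paper's route is direct and non-inductive, but rests on that theorem.
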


To fully justify why this is well-defined, it is necessary to state that $J(T_2 \oplus T_1) = J^{J(T_1)}(f_{T_1} T_2)$ and that this generalises to decompositions with more terms by \cite[Thm. 1.4]{BuanMarsh2021w}, see also \cite[Thm. 6.4]{BuanHanson2023}. In this way, define the subcategory
\[ J(f_{T_1} (T_2), T_1) \coloneqq J(T_2 \oplus T_1), \]
and for a $\tau$-exceptional sequence $(M_r, \dots, M_1)$ define $J(M_r, \dots, M_1)$ analogously via \cref{thm:MendozaTreff}.

\section{Comparing exceptional and $\tau$-exceptional sequences}\label{sec:mainthm1}

Let $e_1, \dots, e_t$ be the primitive orthogonal idempotents of $\A_t$, which correspond to the modules $M(i)$ over $K[x]/(x^t)$ and equivalently to the vertices of the quiver presentation described in \cref{sec:prelim}. Using these, any basic tilting $\A_t$-module $T$ admits an ordered decomposition $T = e_tT \oplus \dots \oplus e_1 T$ into indecomposable modules, which will be referred to as its \textit{quasi-hereditary decomposition}. Throughout, this ordered decomposition will be written as $T= T_t \oplus \dots \oplus T_1$. The following is dual to \cite[Lem. 4.3]{IyamaZhang2020}.

\begin{lemma}\label{lem:tiltapprox}
    Let $T$ be a basic tilting $\A_t$-module. 
    \begin{enumerate} 
    \item The minimal right $\add( \bigoplus_{j=1}^{i-1} T_j)$-approximation of $T_i$ is given by $h_{i-1}: T_{i-1} \to T_i$, which is the left multiplication of the arrow $b_{i}: i \to i-1$ in the quiver of $\A_t$. This is a monomorphism. 
    \item The minimal right $\add( \bigoplus_{j=i+1}^{t} T_j)$-approximation of $T_i$ is given by $h'_{i-1}: T_{i} \to T_{i-1}$, which is the left multiplication of the arrow $a_{i+1}: i+1 \to i$ in the quiver of $\A_t$. 
    \end{enumerate}
\end{lemma}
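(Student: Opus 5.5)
Since $\End(T)\simeq \A_t$ by \cref{thm:tiltingendo}, I would transport the statement to the projective case via the functor $\Hom_{\A_t}(T,-)$, after fixing the identification of summands. The isomorphism should respect the quasi-hereditary labelling: $\Hom(T,T_i)$ corresponds to the indecomposable projective $\End(T)$-module at vertex $i$. For $T\in\langle I_1,\dots,I_{t-1}\rangle$ (\cref{thm:idealsemigroup}), $T$ is a two-sided ideal of $\A_t$ and $T_i=e_iT$. Left multiplication by an element $x\in e_j\A_t e_i$ gives a map $e_iT\to e_jT$. My first step is to show that $e_j\A_te_i\to \Hom(e_iT,e_jT)$, $x\mapsto x\cdot-$, is an isomorphism. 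This is the concrete content behind \cref{thm:tiltingendo} in the ideal description, as in \cite[Thm. 3.6]{IyamaZhang2020}. Injectivity should follow because $T$ is faithful (it is tilting, so $\A_t$ embeds in $T^r$). Surjectivity follows from the dimension count given by the isomorphism $\End(T)\simeq\A_t$.

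With this identification, statement (1) reduces to the corresponding statement for $\A_t$ itself. In $\A_t$, the projective $P(i)=e_i\A_t$ satisfies the following: every map from $P(j)$ with $j<i$ to $P(i)$ is left multiplication by a path from $i$ to $j$. Every such path factors through the arrow $b_i$ (there are no other arrows into lower vertices), and hence factors through $P(i-1)\to P(i)$. So left multiplication by $b_i$ is a right $\add(\bigoplus_{j<i}P(j))$-approximation of $P(i)$. It is minimal because its source is indecomposable and the map is nonzero. Transporting this along the isomorphism $\Hom(T_j,T_i)\cong \Hom(P(j),P(i))$, which is compatible with composition, shows that $h_{i-1}=b_i\cdot-\colon T_{i-1}\to T_i$ is the minimal right approximation. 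Statement (2) is identical, using the arrow $a_{i}$ (the paper's indexing) and indices $j>i$.

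For the monomorphism claim in (1), note that left multiplication by $b_i$ is injective on $P(i-1)\subset P(i)$, which matches the chain of inclusions $P(1)\subset\dots\subset P(t)$. For $T$, I would argue as follows. The kernel $\ker h_{i-1}$ is a submodule of $T_{i-1}$. Applying $\Hom(T,-)$ gives $\Hom(T,\ker h_{i-1})=\ker(\Hom(T,h_{i-1}))=0$, since $\Hom(T,h_{i-1})$ is the injective map $P(i-1)\to P(i)$. Now $\ker h_{i-1}\subseteq T_{i-1}$ lies in the torsion-free class of $(\Gen T, T^\perp)$. Since $T$ is tilting, $T^\perp$ equals $\{M : \Hom(T,M)=0\}$. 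Every submodule of a module in $\Gen T=T^{\perp_1}$ that is killed by $\Hom(T,-)$ must vanish, because $T$ is a generator-like object for $\Gen T$ in the Brenner–Butler sense. Alternatively and more directly: $T_{i-1}=e_{i-1}T\subseteq e_{i-1}\A_t$, and left multiplication by $b_i$ is already injective on $e_{i-1}\A_t$, hence on the submodule $e_{i-1}T$. I would use this second, elementary argument.

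The main obstacle is the first step: pinning down that the abstract isomorphism $\End(T)\simeq\A_t$ sends arrows to left multiplications on $e_iT$ with the correct vertex labelling. I would check this through faithfulness of $T$ together with the dimension equality $\dim\Hom(e_iT,e_jT)=\dim e_j\A_te_i$.
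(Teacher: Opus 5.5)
Your proposal is correct and follows essentially the paper's proof. Both use the left-multiplication isomorphism $\A_t\cong\End(T)$ to transport along $\Hom(T,-)\colon \add T\to\add \A_t$, read off the minimal approximation $P(i-1)\xrightarrow{b_i\cdot}P(i)$ among projectives, and deduce injectivity of $h_{i-1}$ by restricting this injective map to the submodule $T_{i-1}\subseteq P(i-1)$.

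One slip is in your extra verification of that identification. The embedding $\A_t\hookrightarrow T^r$ only shows that the \emph{right} annihilator of $T$ vanishes, whereas injectivity of $x\mapsto x\cdot-$ needs $xT=0\Rightarrow x=0$. That implication does hold here: $e_t\in T$ gives $x\A_te_t\subseteq xT$, and every nonzero submodule of $\A_t$ has socle in $\add S(t)$ (all $P(i)\subseteq P(t)$ have socle $S(t)$), so $x\A_te_t\neq 0$ whenever $x\neq 0$.
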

\begin{proof}
    Left multiplication defines an isomorphism $\A_t \to \End(T)$, by \cref{thm:tiltingendo}. This implies that $\Hom(T,-): \add (T) \to \add (\A_t)$ is an equivalence of categories. The minimal right $\add(\bigoplus_{j=1}^{i-1} e_j \A_t)$-approximation of $e_i \A_t$ is $e_{i-1}\A_t \to e_i \A_t$, which is given by left multiplication by $b_i$. The first assertion follows and the second assertion is proved analogously. Since left multiplication by $b_i$ gives a monomorphism $P(i-1) \to P(i)$, its restriction $h_{i-1}$ is also a monomorphism. 
\end{proof}

As a consequence of the previous lemma, the following simplification of the torsion-free map may be obtained.
\begin{lemma}\label{lem:torfdesc}
	Let $T$ be a basic tilting $\A_t$-module and let $T= T_t \oplus \dots \oplus T_1$ be its TF-ordered decomposition. There is an isomorphism
	\[ f_{\bigoplus_{j<i} T_j} (T_i) \simeq T_i/T_{i-1} \]
	for all $i \in \{2, \dots, t\}$.
\end{lemma}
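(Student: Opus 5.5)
We identify the torsion part of $T_i$ with respect to the torsion pair $(\Gen U, U^\perp)$, where $U \coloneqq \bigoplus_{j<i} T_j$. By the definition of $f_U$ and uniqueness of the canonical sequence
\[ 0 \to t_U(T_i) \to T_i \to f_U(T_i) \to 0, \]
it suffices to show that $t_U(T_i)$ is the image of $h_{i-1}$. Since $h_{i-1}$ is a monomorphism by \cref{lem:tiltapprox}(1), this image is isomorphic to $T_{i-1}$, and so $f_U(T_i) \simeq T_i/T_{i-1}$. (Implicit here is that the quasi-hereditary decomposition is TF-ordered; if needed, this also follows from the argument below, since it shows $f_U(T_i)\neq 0$.)

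The first step is to recall that for any module $M$ and any minimal right $\add U$-approximation $g\colon U_M \to M$, the image of $g$ is the trace of $U$ in $M$, and this trace equals $t_U(M)$. Indeed, every map $U \to M$ factors through $g$, so the trace lies in $\operatorname{im} g$. Conversely, $\operatorname{im} g$ lies in $\Gen U$. Moreover $\operatorname{im} g$ is the largest submodule of $M$ in $\Gen U$: any submodule $N \in \Gen U$ is a quotient of some $U^r$, the composite $U^r \to N \hookrightarrow M$ factors through $g$, and hence $N \subseteq \operatorname{im} g$. Because $\Gen U$ is a torsion class, $t_U(M)$ is precisely this largest torsion submodule. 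Applying this with $M = T_i$ and $g = h_{i-1}$ from \cref{lem:tiltapprox}(1) gives $t_U(T_i) = \operatorname{im} h_{i-1} \simeq T_{i-1}$.

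The second step is to confirm that $T_i/T_{i-1} = \coker h_{i-1}$ lies in $U^\perp$. This is the main point to check carefully. Take $\phi\colon T_j \to T_i/T_{i-1}$ with $j<i$. Since $T$ is tilting, it is $\tau$-rigid by \cref{lem:tiltistau}, so $\Gen U$ is a torsion class by \cite{AIR2014}. The image of $\phi$ lies in $\Gen U$, and its preimage $N \subseteq T_i$ is an extension of $\operatorname{im}\phi$ by $T_{i-1}$. As torsion classes are closed under extensions, $N \in \Gen U$, and maximality forces $N = \operatorname{im} h_{i-1}$, so $\phi = 0$.

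Alternatively, one can lift $\phi$ along the projection using $\Ext^1(T_j, T_{i-1}) = 0$, which holds since $T$ is tilting. The lift $T_j \to T_i$ factors through $h_{i-1}$ by the approximation property, so again $\phi = 0$. Either route completes the identification, and uniqueness of the canonical sequence yields the stated isomorphism.
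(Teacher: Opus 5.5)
Your proof is correct and follows essentially the same route as the paper. Both arguments identify $t_U(T_i)$, for $U=\bigoplus_{j<i}T_j$, with the image of the monomorphic minimal right $\add U$-approximation $h_{i-1}$ of \cref{lem:tiltapprox}(1). The paper does this by a diagram chase that treats the torsion inclusion as a right $\Gen U$-approximation, while you use the equivalent description of $t_U(T_i)$ as the largest submodule of $T_i$ in $\Gen U$ (the trace of $U$). Your second step is correct but unnecessary: $T_i/t_U(T_i)$ lies in $U^\perp$ automatically by the torsion pair.
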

\begin{proof}
	By \cref{lem:tiltapprox}(1) and the defining property of the torsion pair $(\Gen T, T^\perp)$, the solid part of the following diagram exists
	\[
	\begin{tikzcd}
	& ( \bigoplus_{j <i } T_j)^r \arrow[d, twoheadrightarrow, "\epsilon", swap] \arrow[r, "\gamma", dashed] & T_{i-1} \arrow[d, "h_{i-1}", hookrightarrow] \arrow[ld, "\delta",swap, dashed] \\
	0 \arrow[r] & t_{\bigoplus_{j<i} T_j} (T_i) \arrow[r, "\beta", hookrightarrow] & T_i \arrow[r, twoheadrightarrow] &f_{\bigoplus_{j<i} T_j} (T_i) \arrow[r] & 0 
	\end{tikzcd},
	\] 
	for some $r \geq 1$, where $h_{i-1}$ is a minimal right $\add (\bigoplus_{j<i} T_j)$-approximation and $\beta$ is a minimal right $\Gen(\bigoplus_{j<i} T_j)$-approximation. Because $h_{i-1}$ is a right $\add (\bigoplus_{j<i} T_j)$-approximation and $\beta \circ \epsilon$ is a morphism from $(\bigoplus_{j<i} T_j)^r$ to $T_i$, there exists a map $\gamma: (\bigoplus_{j<i} T_j)^r \to T_{i-1}$ such that $h_{i-1} \circ \gamma = \beta \circ \epsilon$. Similarly, because $\beta$ is a right $\Gen(\bigoplus_{j<i} T_j)$-approximation and $T_{i-1} \in \Gen(\bigoplus_{j<i} T_j)$, it follows that there exists $\delta: T_{i-1} \to t_{\bigoplus_{j<i} T_j} (T_i)$ such that $h_{i-1} = \beta \circ \delta$. As $h_{i-1}$ is a monomorphism, $\delta$ has to be a monomorphism as well. Furthermore, it follows that
	\[ \beta \circ \delta \circ \gamma = h_{i-1} \circ \gamma = \beta \circ \epsilon \quad \Longrightarrow \quad \delta \circ \gamma = \epsilon \]
	because $\beta$ is a monomorphism. Since $\epsilon$ is an epimorphism, this means that $\delta$ has to be an epimorphism. In conclusion $\delta$ is an isomorphism, from which the result follows.
\end{proof}

The following provides the first step towards establishing a relationship between exceptional and $\tau$-exceptional sequences in $\mods \A_t$.

\begin{lemma}\label{lem:tfordered}
    Let $T$ be a basic tilting $\A_t$-module. Then its quasi-hereditary decomposition is a TF-ordered $\tau$-tilting module.
\end{lemma}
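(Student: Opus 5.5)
First, $T$ is $\tau$-tilting: by \cref{lem:tiltistau} a tilting module is $\tau$-rigid, and $|T| = t$ equals the number of simple $\A_t$-modules. It remains to check that the quasi-hereditary decomposition $T = T_t \oplus \dots \oplus T_1$ is TF-ordered, that is,
\[ T_i \notin \Gen\Bigl(\bigoplus_{j<i} T_j\Bigr) \quad \text{for } 2 \leq i \leq t. \]

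The plan is to use \cref{lem:tiltapprox}(1). Suppose, for a contradiction, that $T_i \in \Gen(\bigoplus_{j<i} T_j)$. Then there is an epimorphism $\epsilon: (\bigoplus_{j<i} T_j)^r \twoheadrightarrow T_i$. Since $h_{i-1}: T_{i-1} \to T_i$ is a right $\add(\bigoplus_{j<i} T_j)$-approximation, $\epsilon$ factors through $h_{i-1}$. Hence $h_{i-1}$ is itself an epimorphism. By \cref{lem:tiltapprox}(1) it is also a monomorphism, so $h_{i-1}$ is an isomorphism $T_{i-1} \simeq T_i$. This contradicts $T$ being basic, since $T_{i-1}$ and $T_i$ are non-isomorphic indecomposable summands. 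Equivalently, in the notation of \cref{lem:torfdesc}, we get $t_{\bigoplus_{j<i}T_j}(T_i) \simeq T_{i-1} \neq T_i$, so $f_{\bigoplus_{j<i}T_j}(T_i) \simeq T_i/T_{i-1} \neq 0$.

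The only point needing care is that the summands $T_i = e_i T$ are pairwise non-isomorphic and indecomposable, so that the quasi-hereditary decomposition really is a decomposition into distinct indecomposables. This follows from the equivalence $\Hom(T,-): \add T \to \add \A_t$ in the proof of \cref{lem:tiltapprox}, which sends the summands $T_i$ to the pairwise non-isomorphic indecomposable projectives $P(i)$. I do not expect a real obstacle beyond this.
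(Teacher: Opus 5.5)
Your proof is correct and follows the paper's own argument: any epimorphism from an object of $\add(\bigoplus_{j<i}T_j)$ onto $T_i$ would factor through the monomorphism $h_{i-1}$ of \cref{lem:tiltapprox}(1), which is impossible. You also spell out the step the paper leaves implicit, namely that $h_{i-1}$ would then be an isomorphism $T_{i-1}\simeq T_i$, contradicting that $T$ is basic.
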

\begin{proof}
	Clearly, $T$ is a $\tau$-tilting module by \cref{lem:tiltistau}. Assume for a contradiction that $T_i \in \Gen(\bigoplus_{j=1}^{i-1} T_j)$ for some $i \in \{2, \dots, t\}$. Then there exists an epimorphism $g: (\bigoplus_{j=1}^{i-1} T_j)^r \to T_i$ for some $r \geq 1$. However, every component $g_\ell$ of $g$ factors through the monomorphism $h_{i-1}: T_{i-1} \to T_i$ by \cref{lem:tiltapprox}(1). Therefore, $g$ cannot be an epimorphism. 
\end{proof}

To establish a connection between exceptional modules and $\tau$-rigid modules, the following characterisation of exceptional modules is important. Recall that a module $M \in \mods \A_t$ is called \textit{thin} if its dimension vector only has entries equal to 0 or 1. 

\begin{theorem}\cite[Thm. 2.2(1)]{HillePloog2019}\label{thm:HPcharexceptional}
	Let $M \in \mods \A_t$. Then $M$ is exceptional if and only if $M$ is indecomposable, thin and satisfies $\dim \Hom(P(t),M)=1$.
\end{theorem}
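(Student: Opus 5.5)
The plan is to use the homological Euler form of $\A_t$, which is available because $\operatorname{gldim}\A_t\le 2$. It converts the exceptionality condition into a numerical condition on dimension vectors. First note that $\dim\Hom(P(t),M)=\dim Me_t$ is the entry of $\underline{\dim}\,M$ at vertex $t$.

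\textbf{Euler form.} The Cartan matrix of $\A_t$ is $C_{ij}=\min(i,j)$, because $P(j)$ has dimension vector $(1,2,\dots,j,j,\dots,j)$. Its inverse is tridiagonal, with $2$ on the diagonal except a $1$ in the last entry, and $-1$ off the diagonal. Hence the Euler form
\[
\chi(M,N)=\sum_{i=0}^{2}(-1)^i\dim\Ext^i(M,N)
\]
has quadratic form
\[
q(x)=x_1^2+\sum_{i=1}^{t-1}(x_{i+1}-x_i)^2 .
\]
The solutions of $q(x)=1$ with $x\ge 0$, $x\neq 0$ are exactly the vectors $x=(0,\dots,0,1,\dots,1)$, whose support is an interval $[s,t]$ ending at $t$.

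\textbf{Forward direction.} If $M$ is exceptional, then $\End(M)\simeq K$ forces $M$ to be indecomposable. Moreover $q(\underline{\dim}\,M)=\chi(M,M)=1$, since $\Ext^1(M,M)=0=\Ext^2(M,M)$. By the computation above, $\underline{\dim}\,M$ is a $0/1$ vector with entry $1$ at vertex $t$. So $M$ is thin and $\dim\Hom(P(t),M)=1$.

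\textbf{Converse: reduction.} Let $M$ be indecomposable and thin with $\dim Me_t=1$.
\begin{itemize}
\item The arrows of the quiver only join neighbouring vertices, so the support of a thin indecomposable module is connected. Hence $\underline{\dim}\,M=(0,\dots,0,1,\dots,1)$ and $q(\underline{\dim}\,M)=1$.
\item Any endomorphism of a thin module acts by a scalar at each vertex. Indecomposability makes $\End(M)$ local, and a diagonal nilpotent endomorphism is zero. Therefore $\End(M)\simeq K$.
\item Now $\chi(M,M)=1$ gives $\dim\Ext^1(M,M)=\dim\Ext^2(M,M)$. So it suffices to prove $\Ext^2(M,M)=0$.
\end{itemize}

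\textbf{Converse: vanishing of $\Ext^2$ (main obstacle).} For this I will describe the modules concretely. A thin representation supported on $[s,t]$ is given by a choice, for each $i$, of whether $a_i$ and $b_i$ act nonzero, subject to the relations $a_1b_1=0$ and $a_{i+1}b_{i+1}=b_ia_i$. The relations force at most one of $a_i,b_i$ to be nonzero in the relevant places, so $M$ is uniserial-like along a zigzag.
\begin{itemize}
\item I would write the minimal projective resolution
\[
0\to P_2\to P_1\to P_0\to M\to 0
\]
explicitly. Here $P_0$ is the sum of $P(i)$ over the top vertices of the zigzag, and $P_1,P_2$ are read off from the missing arrows.
\item Then I would show that every map $P_2\to M$ extends along $P_2\to P_1$, so that $\Ext^2(M,M)=0$. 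Equivalently, I would check $\Hom(P_2,M)=0$ in most cases, using that $P_2$ is generated at vertices outside the top of $M$ or at vertex $t$.
\end{itemize}

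An alternative I would try first is to exploit that $P(t)$ is projective-injective and $M$ has a one-dimensional $e_t$-part. Apply $\Hom(-,M)$ to $0\to\Omega M\to P_0\to M\to 0$, and show $\Ext^1(\Omega M,M)=0$ by showing that $\Omega M$ has $\projdim\le 1$ with its projective cover involving only summands on which $M$ vanishes.
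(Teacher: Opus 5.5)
\textbf{There is a genuine gap: the converse is not proved.} Your forward direction is correct and complete. The Cartan matrix $\min(i,j)$, its tridiagonal inverse, the form $q(x)=x_1^2+\sum_i(x_{i+1}-x_i)^2$ and its nonnegative solutions $(0,\dots,0,1,\dots,1)$ of $q(x)=1$ are all right. The reduction of the converse to $\Ext^2(M,M)=0$ is also correct. The paper does not prove this statement; it quotes it from Hille--Ploog, so everything rests on your own argument. That last step is the entire content of the converse: $\chi(M,M)=1$ cannot distinguish $\Ext^1=\Ext^2=0$ from $\Ext^1\cong\Ext^2\neq 0$. You only announce it, and neither mechanism you propose works.

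\emph{The heuristic $\Hom(P_2,M)=0$ fails.} Take $t=2$ and $M=P(2)/S(2)$. Its minimal resolution is $0\to P(1)\to P(2)\to P(2)\to M\to 0$, so $\Hom(P_2,M)=Me_1\cong K\neq 0$. Here $\Ext^2(M,M)=0$ holds only because $\Hom(P_1,M)\to\Hom(P_2,M)$ is surjective: the composite $P(1)\hookrightarrow P(2)\twoheadrightarrow M$ is nonzero. The same happens for $t=3$ with the module having top $S(2)$ and socle $S(1)\oplus S(3)$, where $P_2=P(1)$ and $Me_1\neq 0$. Also, $P(t)$ never occurs in a minimal $P_2$, being projective-injective, and vertices outside the top of $M$ usually lie in its support.

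\emph{The alternative via $\Omega M$ is not sound either.} For a resolution $0\to Q_1\to Q_0\to\Omega M\to 0$, $\Ext^1(\Omega M,M)$ is the cokernel of $\Hom(Q_0,M)\to\Hom(Q_1,M)$. So arranging $\Hom(Q_0,M)=0$ only reduces the problem to $\Hom(Q_1,M)=0$, the same false heuristic. In the example, $\Omega M=S(2)$ has projective cover $P(2)$, on which $M$ does not vanish.

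\textbf{How to close the gap.} Prove $\Ext^1(M,M)=0$ instead; your identity $\dim\Ext^1(M,M)=\dim\Ext^2(M,M)$ then finishes. This is elementary here.
\begin{enumerate}
\item The defining relations are quadratic, one at each vertex $j\le t-1$: $a_1b_1$, and $a_jb_j-b_{j-1}a_{j-1}$ for $2\le j\le t-1$.
\item Let $M$ be thin and indecomposable with support $[s,t]$. As for any bound quiver algebra, a self-extension is given by block upper triangular matrices. Concretely, it is given by scalars $(\dot a_j,\dot b_j)_{s\le j<t}$ satisfying the linearised relations $a_j\dot b_j+\dot a_jb_j=b_{j-1}\dot a_{j-1}+\dot b_{j-1}a_{j-1}$, with right-hand side zero for $j=s$, taken modulo coboundaries.
\item As you observed, exactly one of $a_j,b_j$ is nonzero for each $j$. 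So the $j$-th equation determines the variable of the vanishing arrow at edge $j$ from the variables at edge $j-1$. Hence the cocycles form a space of dimension $t-s$.
\item The coboundaries have dimension $(t-s+1)-\dim\End(M)=t-s$, so they exhaust the cocycles.
\end{enumerate}
Thus $\Ext^1(M,M)=0$, hence $\Ext^2(M,M)=0$, and $M$ is exceptional.
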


The final condition in the preceding theorem is related to the following.

\begin{lemma}\label{lem:tiltdimhom}
    Let $T$ be a basic tilting $\A_t$-module. Then $\dim \Hom(P(t), T_i)=i$ for all $i \in \{1, \dots, t\}$. 
\end{lemma}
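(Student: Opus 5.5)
The plan is to reduce the statement to a computation with projective modules. I will do this by combining the ideal description of tilting modules from \cref{thm:idealsemigroup} with the endomorphism isomorphism of \cref{thm:tiltingendo}, as already used in the proof of \cref{lem:tiltapprox}.

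\emph{Step 1: the top summand is $P(t)$.} By \cref{thm:idealsemigroup}, $T$ is a product $I = I_{j_1} I_{j_2} \cdots I_{j_m}$ with all $j_k \in \{1, \dots, t-1\}$. Its quasi-hereditary decomposition is $T_i = e_i I$. Each $I_{j} = \langle 1 - e_j \rangle$ with $j \neq t$ contains $e_t$, since $e_t = e_t(1-e_j)$. Hence $e_t = e_t e_t \cdots e_t \in I$, so $e_t \in e_t I$. This gives
\[ e_t \A_t \subseteq e_t I \subseteq e_t \A_t , \]
and therefore $T_t = e_t I = e_t \A_t = P(t)$. (Alternatively: $P(t)$ is projective-injective, as it corresponds to the projective-injective $K[x]/(x^t)$-module $M(t)$, so it lies in $\add T$. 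The ideal argument pins down which summand it is.)

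\emph{Step 2: transport along the equivalence.} As in the proof of \cref{lem:tiltapprox}, \cref{thm:tiltingendo} makes $\Hom(T,-)\colon \add T \to \add \A_t$ an equivalence sending $T_i$ to $e_i\A_t = P(i)$ for each $i$. This matching of indices holds because the isomorphism $\A_t \to \End(T)$ is given by left multiplication, so the idempotent $e_i$ acts as the projection onto $e_i T$. It follows that
\[ \dim \Hom(P(t), T_i) = \dim \Hom(T_t, T_i) = \dim \Hom(P(t), P(i)). \]

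\emph{Step 3: compute for projectives.} $\Hom(P(t),P(i))$ is the vertex-$t$ component of $P(i) = e_i \A_t$, namely $e_i \A_t e_t$. By definition of $\A_t$ as an Auslander algebra, this space is identified with $\Hom_{K[x]/(x^t)}(M(t), M(i))$. That space has dimension $i$, since every homomorphism out of the cyclic module $M(t)$ is determined by the image of a generator, which can be any element of $M(i)$.

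The main obstacle I expect is the bookkeeping in Step 2. One must make sure that the equivalence really sends the summand labelled $T_i = e_iT$ to $P(i)$ rather than to some other $P(j)$, and the left/right conventions for multiplication by idempotents need care. If this becomes awkward, the fallback is Step 1 together with a direct computation: write $\Hom(P(t), T_i) = e_i I e_t$ and check by induction on the length $m$ of the product that right multiplication by a further factor $I_j$ with $j<t$ does not change $\dim e_i I e_t$. The induction would use that $\Hom(P(t),-)$ is exact and that $I/II_j$ is supported away from vertex $t$.
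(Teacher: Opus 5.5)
Your argument is correct, but its main step differs from the paper's. Step 1 is the same as in the paper. After that, the paper does not use \cref{thm:tiltingendo} at all. It sandwiches $T_i$ between two submodules. Since $T = I$ is a two-sided ideal containing $e_t$, it contains $X = \A_t e_t \A_t$, so $e_i X \subseteq T_i \subseteq e_i \A_t = P(i)$. Applying the exact functor $\Hom(P(t),-)$ (i.e.\ right multiplication by $e_t$), both outer terms have dimension $i$.

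You instead transport the computation along the equivalence $\Hom(T,-)\colon \add T \to \add \A_t$. This is valid, but, as you correctly flag, it needs the stronger form of \cref{thm:tiltingendo} in which the isomorphism $\A_t \to \End(T)$ is given by left multiplication. The paper itself uses that form in \cref{lem:tiltapprox}. An abstract isomorphism alone would not match $T_i$ with $P(i)$.

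Your route gives the full table $\dim \Hom(T_j, T_i) = \dim \Hom(P(j), P(i))$ at once, at the cost of relying on a deep external theorem. The paper's route is elementary and needs only \cref{thm:idealsemigroup}.

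Your fallback also needs no induction. We have $\Hom(P(t), T_i) = e_i I e_t$. For any $a \in \A_t$, the element $e_i a e_t$ lies in $I$, because $e_t \in I$ and $I$ is a left ideal. Hence $e_i I e_t = e_i \A_t e_t$ directly, which is the paper's argument in compressed form.
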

\begin{proof}
    Let $i \in \{1, \dots, t\}$. From the description of $T$ as an element of the ideal semigroup in \cref{thm:idealsemigroup}, it follows directly that each $T_i$ is a submodule of $e_i \A_t = P(i)$. The indecomposable projective $P(i)$ is such that $\dim \Hom(P(t), P(i))=i$ for all $1 \leq i \leq t$. Therefore $i$ is an upper bound for $\dim \Hom(P(t), T_i)$. 

    Observe that $e_t \in I_j$ for all $j \in \{1, \dots, t-1\}$. It follows that $T_t = P(t)$, this establishes the result for $i=t$. Otherwise, let $X = \langle e_t \rangle \subseteq \A_t$ be the ideal generated by the idempotent at vertex $t$. It follows that $X$ is a submodule of $T$ and that $e_i X$ is a submodule of $T_i$. It is readily checked that $\dim \Hom(P(t), e_iX) = i$, which implies that $i$ is also a lower bound for $\dim \Hom(P(t), T_i)$. The result follows.
\end{proof}

It is now possible to show the first main result of this paper, establishing that every complete exceptional sequence in $\mods \A_t$ is also a complete $\tau$-exceptional sequence.

\begin{theorem}\label{thm:excepistauexcep}
    The bijection $\Phi$ of \cref{thm:MendozaTreff} restricts to a bijection between the quasi-hereditary decompositions of basic tilting $\A_t$-modules and complete exceptional sequences in $\mods \A_t$:
    \[ \Phi: 
    \begin{tikzcd}
\{\text{TF-ordered $\tau$-tilting $\A_t$-modules}\} \arrow[r, "\text{bij.}"] & \left\{ \begin{varwidth}{10em} \begin{center} complete $\tau$-exceptional sequences in $\mods \A_t$ \end{center} \end{varwidth} \right\} \\
\left\{ \begin{varwidth}{15em} \begin{center} quasi-hereditary decompositions of tilting $\A_t$-modules \end{center} \end{varwidth} \right\} \arrow[r, "\Phi","\text{bij.}"'] \arrow[u, phantom, "\cup"] & \left\{ \begin{varwidth}{10em} \begin{center} complete exceptional sequences in $\mods \A_t$ \end{center} \end{varwidth} \right\} \arrow[u, phantom, "\cup"]
\end{tikzcd}
    \]
    In particular, every complete exceptional sequence in $\mods \A_t$ is a complete $\tau$-exceptional sequence. 
\end{theorem}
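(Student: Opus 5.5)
By \cref{lem:tfordered}, the quasi-hereditary decomposition $T=T_t\oplus\dots\oplus T_1$ of a basic tilting module is a TF-ordered $\tau$-tilting module. \cref{thm:MendozaTreff} therefore applies, and $\Phi(T)$ is a complete $\tau$-exceptional sequence. By \cref{lem:torfdesc} it equals
\[ \Phi(T) = (T_t/T_{t-1}, \dots, T_2/T_1, T_1). \]
The plan has three steps. First, show that $\Phi(T)$ is a complete exceptional sequence. Second, observe that $\Phi$ is injective on quasi-hereditary decompositions. Third, conclude surjectivity by counting.

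For the first step, set $E_i := T_i/T_{i-1}$ (with $T_0=0$). Since $E_i$ is a term of a $\tau$-exceptional sequence, it is indecomposable. Because $\Hom(P(t),-)$ is exact, \cref{lem:tiltdimhom} gives $\dim\Hom(P(t),E_i)=i-(i-1)=1$. For thinness I would use that $T_i\subseteq P(i)$ comes from the ideal description of \cref{thm:idealsemigroup}, and that the embedding $h_{i-1}$ of \cref{lem:tiltapprox} is left multiplication by $b_i$. The dimension vector of $P(i)/b_iP(i-1)$ is thin: it is the standard module $\Delta(i)$, which has one-dimensional support at each vertex $\le i$. I expect the quotient $T_i/b_iT_{i-1}$ to be a quotient of, or at least dimensionwise bounded by, such a thin module. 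Concretely, I would compare $\underline{\dim}\, T_i-\underline{\dim}\, T_{i-1}$ with $\underline{\dim}\, P(i)-\underline{\dim}\, P(i-1)$, entry by entry, using the explicit shape of products of the ideals $I_j$. With indecomposable, thin and $\dim\Hom(P(t),E_i)=1$ in hand, \cref{thm:HPcharexceptional} shows that each $E_i$ is exceptional.

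It remains to check the orthogonality $\Hom(E_i,E_j)=0=\Ext^\ell(E_i,E_j)$ for $i<j$. The $\tau$-exceptional property gives $E_i\in J(T_{i-1}\oplus\dots\oplus T_1)$, so $\Hom(T_k,E_i)=0$ for $k<i$. The vanishing required runs in the other direction, from the later terms into the earlier ones, and this is the main obstacle. I would try to derive it from the filtration of $\add T$ by the chain $T_1\subset\dots\subset T_t$. Since $\End(T)\simeq\A_t$ by \cref{thm:tiltingendo}, $T$ is a tilting module whose endomorphism ring is again quasi-hereditary with the same order. Under the derived equivalence $\RHom(T,-)$, the modules $E_i$ should correspond to the standard modules $\Delta(i)$ of $\A_t$. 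These form an exceptional sequence in the quasi-hereditary order, and since the equivalence preserves $\Hom$ and $\Ext$, the orthogonality conditions transfer to the $E_i$.

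For injectivity, $T$ is recovered from $\Phi(T)$ by iterated extensions $T_i$ of $E_i$ by $T_{i-1}$; alternatively, $\Phi$ is already a bijection on all TF-ordered modules. Finally, \cref{thm:BHRRcount} gives $t!$ basic tilting modules, each with a unique quasi-hereditary decomposition, and \cite{HillePloog2019} gives $t!$ complete exceptional sequences. An injective map between finite sets of equal size is a bijection, and the last assertion of the theorem follows.
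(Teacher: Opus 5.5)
Your skeleton matches the paper's: \cref{lem:tfordered}, then $\Phi(T)=(T_t/T_{t-1},\dots,T_2/T_1,T_1)$ by \cref{lem:torfdesc}, then injectivity of $\Phi$ together with the two counts of $t!$. However, your step proving that each $E_i$ is exceptional has a genuine gap: the thinness argument fails as planned. In general $E_i$ is neither a quotient of, nor dimensionwise bounded by, $\Delta(i)=P(i)/P(i-1)$. Moreover $\Delta(i)$ is supported on the vertices $\geq i$, not $\leq i$. A counterexample: take $t=3$ and the tilting module $T=P(3)\oplus T_2\oplus P(1)$ of \cref{sec:example}, with $T_2=\begin{smallmatrix}1&&3\\&2&\\&&3\end{smallmatrix}$.
\begin{itemize}
\item $\underline{\dim}\,T_3-\underline{\dim}\,T_2=(0,1,1)$, that is, $E_3=\begin{smallmatrix}&3\\2&\end{smallmatrix}$.
\item $\underline{\dim}\,P(3)-\underline{\dim}\,P(2)=(0,0,1)$, that is, $\Delta(3)=S(3)$.
\end{itemize}
So the entrywise comparison you propose is false, and nothing else in the proposal establishes thinness.

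The repair is already in your proposal. Once your claim that $E_i$ ``should correspond'' to $\Delta(i)$ is actually proved, the derived-equivalence step gives exceptionality of each $E_i$ directly. Thinness and \cref{thm:HPcharexceptional} are then not needed.
\begin{itemize}
\item Since $\Ext^1(T,T_i)=0$ and $\projdim T\leq 1$, you get $\Ext^1(T,E_i)=0$. Using also $\Ext^1(T,T_{i-1})=0$, applying $\Hom(T,-)$ to $0\to T_{i-1}\xrightarrow{h_{i-1}}T_i\to E_i\to 0$ shows that $\RHom(T,E_i)$ is the module $\coker\Hom(T,h_{i-1})$.
\item Under $\add T\simeq\add\A_t$ (\cref{thm:tiltingendo}), \cref{lem:tiltapprox} identifies $\Hom(T,h_{i-1})$ with the inclusion $P(i-1)\hookrightarrow P(i)$. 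Hence $\RHom(T,E_i)\simeq P(i)/P(i-1)$ and $\RHom(T,T_1)\simeq P(1)$.
\item These are the standard modules for the quasi-hereditary order in which $1$ is \emph{maximal}. For that order, $\Hom(\Delta(i),\Delta(j))\neq 0$ implies $i\geq j$, $\Ext^{\ell}(\Delta(i),\Delta(j))\neq 0$ implies $i>j$ for $\ell\geq 1$, and $\End(\Delta(i))\simeq K$. This is exactly the paper's index convention.
\item Since $\RHom(T,-)$ preserves $\Hom_{\Dcal^b}(-,-[\ell])$, you obtain $\End(E_i)\simeq K$, $\Ext^{\geq 1}(E_i,E_i)=0$ and all the orthogonality at once.
\end{itemize}
Incidentally, the vanishing you call the main obstacle does not run in the other direction, and its $\Hom$-part is automatic. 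For $i<j$, $\Hom(E_i,E_j)$ embeds into $\Hom(T_i,E_j)=0$, because $E_i$ is a quotient of $T_i$ and $E_j\in J(T_{j-1}\oplus\dots\oplus T_1)$.

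The paper never compares dimension vectors. It invokes \cite[Prop.~3.5]{HillePloog2019} to reduce the claim to \cref{lem:tiltdimhom} together with exceptionality of each $T_i/T_{i-1}$. It then shows $\Hom(T_{i-1},T_i/T_{i-1})=0$, so that $\End(T_i/T_{i-1})\simeq\Hom(T_i,T_i/T_{i-1})$, which is one-dimensional via $\add T\simeq\add\A_t$. Thinness and indecomposability then come from \cite[Cor.~2.4]{HillePloog2019}.

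Your repaired route is more conceptual: it exhibits $\Phi(T)$ as the image of the standard exceptional sequence $(\Delta(t),\dots,\Delta(1))$ under the tilting equivalence. It uses Hille--Ploog only for the count $t!$. The paper's argument stays inside $\mods\A_t$ and relies on their criteria.
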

\begin{proof}
    By \cref{lem:tfordered}, the quasi-hereditary decomposition of $T$ is a TF-ordered $\tau$-tilting module and hence lies in the domain of $\Phi$, whose codomain is the set of $\tau$-exceptional sequences. Since $|T|=t$, the image of $T$ under $\Phi$ is thus a complete $\tau$-exceptional sequence. Once it is established that $\Phi(T)$ is a complete exceptional sequence, the fact that there are $t!$ basic tilting modules by \cref{thm:BHRRcount} and $t!$ complete exceptional sequences by \cite[Introduction Thm.]{HillePloog2019} shows that all complete exceptional sequences must arise in this way because $\Phi$ is a bijection. 
    
    It remains to show that $\Phi(T_t \oplus \dots \oplus T_1)$ is a complete exceptional sequence. By \cref{lem:torfdesc}, the isomorphism
    \[ f_{\bigoplus_{j=1}^{i-1} T_j} (T_i) \simeq T_i/T_{i-1} \] 
	holds for all $i \in \{2, \dots, t\}$. It follows that 
	\[ \Phi(T) = (T_t/T_{t-1}, \dots, T_2/T_1, T_1). \]
	
	Moreover, $T$ defines a filtration $0 \subset T_1 \subset T_2 \subset \dots \subset T_{t-1} \subset T_t = P(t)$. The result \cite[Prop. 3.5]{HillePloog2019} then states that $\Phi(T)$ is a complete exceptional sequence if 
    \begin{enumerate}
        \item $\dim \Hom(P(t), T_i)=i$;
        \item $T_1$ is an exceptional module;
        \item each $T_i /T_{i-1}$ for $i \in \{ 2, \dots, t\}$ is an exceptional module.
    \end{enumerate}
    It remains to prove (2) and (3), as (1) is shown in \cref{lem:tiltdimhom}. Observe that the indecomposable projective $P(1)$ is thin. Thus, its submodule $T_1$ is also thin, which yields that $T_1$ is an exceptional modules by \cref{thm:HPcharexceptional} and \cref{lem:tiltdimhom}. It remains to show (3).
    
    Let $i \in \{2, \dots, t\}$. By \cref{thm:HPcharexceptional} it is necessary and sufficient to show that each $T_i/T_{i-1}$ is indecomposable, thin and such that $\dim \Hom(P(t), T_i/T_{i-1})=1$. Consider the short exact sequence
    \begin{equation}\label{eq:sesexceptional} 0 \to T_{i-1} \xrightarrow{h_{i-1}} T_i \to T_i/T_{i-1} \to 0, \end{equation}
    where $h_{i-1}$ is the minimal right $\add(\bigoplus_{j=1}^{i-1} T_j)$-approximation of $T_i$ of \cref{lem:tiltapprox}(1). Applying $\Hom(P(t),-)$ to \cref{eq:sesexceptional} and using (1), yields
    \[ \dim \Hom(P(t), T_i/T_{i-1}) = \dim \Hom(P(t), T_i) - \dim \Hom(P(t), T_{i-1}) = i-(i-1)=1. \]
    
     To show that each $T_i/T_{i-1}$ is indecomposable and thin and thus complete the proof, it is sufficient to show that $T_i/T_{i-1}$ satisfies $\End(T_i/T_{i-1})\simeq K$ by \cite[Cor. 2.4]{HillePloog2019}.
    Applying $\Hom(T_{i-1},-)$ to \cref{eq:sesexceptional} yields an exact sequence
    \[ \Hom(T_{i-1}, T_{i-1}) \xrightarrow{\Hom(T_{i-1}, h_{i-1})} \Hom(T_{i-1}, T_i) \to \Hom(T_{i-1}, T_i/T_{i-1}) \to \Ext^1(T_{i-1}, T_{i-1})\] 
    which implies that $ \Hom(T_{i-1}, T_i/T_{i-1}) = 0$ because $\Hom(T_{i-1}, h_{i-1})$ is surjective since $h_{i-1}$ is a right approximation and $\Ext^1(T_{i-1}, T_{i-1})=0$ since $T$ is tilting. Consequently, applying $\Hom(-, T_i/T_{i-1})$ to \cref{eq:sesexceptional} gives an exact sequence
    \begin{equation}\label{eq:endoquotientdim} 0 \to \End(T_i/T_{i-1}) \to \Hom(T_i, T_i/T_{i-1}) \to \Hom(T_{i-1},T_i/T_{i-1}) = 0.\end{equation}
    
    There is an equivalence of categories $\Hom(T,-): \add T \to \add \A_t$ because left multiplication defines an isomorphism $\A_t \to \End(T)$ by \cref{thm:tiltingendo}. This implies that
    \[ \dim \Hom(P(i),P(i))=i \quad \text{and} \quad \dim \Hom(P(i), P(i-1))=i-1\]
    restrict to $T_i$ and $T_{i-1}$. Applying $\Hom(T_i, -)$ to \cref{eq:sesexceptional} and using $\Ext^1(T_i, T_{i-1})=0$, because $T$ is tilting, yields
    \[ \dim \Hom(T_i, T_i/T_{i-1})= \dim \End(T_i, T_i)- \dim(T_i, T_{i-1})= i-(i-1) = 1.\]
    Combining this with the isomorphism $\End(T_i/T_{i-1}) \simeq \Hom(T_i, T_i/T_{i-1})$ of \cref{eq:endoquotientdim} yields the desired $\End(T_{i}/T_{i-1}) \simeq K$. This completes the proof that $T_i/T_{i-1}$ is exceptional and thus the proof of the statement. 
\end{proof}

\begin{remark}
    Not every exceptional sequence in $\mods \A_t$ is also a $\tau$-exceptional sequence. This can be immediately observed from the fact that not every exceptional module is $\tau$-rigid. Indeed, consider the case $t=2$ in \cref{exmp:t2}. The module $\begin{smallmatrix} & 2\\ 1& \end{smallmatrix}$, which can equivalently be written as $P(2)/S(1)$, is exceptional but not $\tau$-rigid, as can be easily checked. 
\end{remark}

\begin{remark}\label{rmk:excepnottau}
    For an arbitrary finite dimensional $K$-algebra, not every complete exceptional sequence is necessarily a $\tau$-exceptional sequence. Consider the algebra 
    \[ \Gamma = K(1 \xrightarrow{a} 2 \xrightarrow{b}3 ) /\langle ab \rangle.\]
     It can readily be checked that $(P_2, P_1, S_1)$ is a complete exceptional sequence but not a $\tau$-exceptional sequence because $\Hom(P_2, \tau S_1) \neq 0$.
\end{remark}

\section{Comparing mutations}\label{sec:mutation}
Now that it is established that each complete exceptional sequence in $\mods \A_t$ is a complete $\tau$-exceptional sequence, it is natural to compare their mutation theories. Recall that generally the mutation of complete exceptional sequences is an operation in $\Dcal^b(\mods \A_t)$ whereas the mutation of complete $\tau$-exceptional sequences remains in $\mods \A_t$. Let $(E,F)$ be an exceptional sequence consisting of two terms, also called an exceptional pair. There is a natural map
\[ \Hom(E,F) \otimes_K E \to F, \quad \phi \otimes e \mapsto \phi(e) \]
which, for each integer $\ell$, defines a natural map
\[ \Hom(E[-\ell ],F) \otimes_K E[-\ell] = \Hom(E, F[\ell]) \otimes_K E \to F. \]
Taking direct sums this defines the \textit{canonical map} and a triangle in $\Dcal^b(\mods \A_t)$ given by
\begin{equation}\label{eq:leftmutexcep} \RHom(E,F) \otimes E \to F \to \mathcal{L}_E F \to  \quad .\end{equation}
The \textit{left mutation} of $(E,F)$, later called \textit{left $\psi$-mutation} to distinguish it from the mutation of $\tau$-exceptional sequence, is then defined to be the exceptional pair $(\Lcal_E F, E)$, see \cite[Sec. 2.2]{Gorodentsev1990}. The dual canonical map similarly yields a triangle
\[ \mathcal{R}_F E \to E \to \RHom(E,F)^* \otimes E \to \]
in $\Dcal^b(\mods \A_t)$ which defines the \textit{right $\psi$-mutation} $(F, \Rcal_F E)$ of $(E,F)$. 

\begin{definition}\label{def:completeexcepmut}
    Let $\Ecal = (E_t, \dots, E_1)$ be a complete exceptional sequence in $\mods \A_t$ and let $i \in \{2, \dots, t\}$. Define the \textit{left $\psi$-mutation of $\Ecal$ at (position) $i$} to be
    \[ \psi_i(\Ecal) \coloneqq (E_t, \dots, E_{i+1}, \Lcal_{E_i} E_{i-1}, E_i, E_{i-2},\dots, E_1). \]
    Similarly, define the \textit{right $\psi$-mutation of $\Ecal$ at (position) $i$} to be 
    \[ \psi_i^{-1}(\Ecal) \coloneqq (E_t, \dots, E_{i+1}, E_{i-1}, \Rcal_{E_{i-1}} {E}_i, E_{i-2}, \dots, E_1). \] 
\end{definition}

This notation is justified because these operations are inverse to each other by \cite[Sec. 2.3]{Gorodentsev1990}. It is important to remember that the left and right $\psi$-mutations of a complete exceptional sequence in $\mods \A_t$ may not be complete exceptional sequence in $\mods \A_t$. This property of right $\psi$-mutation is characterised in \cite[Lem. 4.7, Cor. 4.8]{HillePloog2019}. To prove the analogous statement for left $\psi$-mutation and for various other proofs, the following collection of statements about complete exceptional sequences in $\mods \A_t$ is essential.

\begin{lemma}\label{lem:completeexcepprop}
    Let $(E_t, \dots, E_1)$ be a complete exceptional sequence in $\mods \A_t$. The following hold:
    \begin{enumerate}
        \item $\dim \Hom(E_i, E_j) = 1 = \dim \Ext^1(E_i,E_j)$ for any $i>j$;
        \item $\Ext^2(E_i, E_j)=0$ for any $i>j$; 
        \item Every nonzero morphism $E_i \to E_{i-1}$ is either injective or surjective for all $i \in \{2, \dots, t\}$;
        \item If $E_2 \to E_1$ is injective, then $\projdim E_2 \leq 1$;
        \item If $f:E_i \to E_{i-1}$ is injective for $i \in \{2, \dots, t\}$, then $\dim \Ext^1(\coker f, E_i)=1$.
    \end{enumerate} 
\end{lemma}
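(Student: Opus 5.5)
By \cref{thm:excepistauexcep}, every complete exceptional sequence has the form $(T_t/T_{t-1},\dots,T_2/T_1,T_1)$ for a basic tilting module $T$ with filtration $0\subset T_1\subset\dots\subset T_t=P(t)$. I will therefore set $E_i = T_i/T_{i-1}$ (with $T_0=0$) and work with the short exact sequences \cref{eq:sesexceptional}. The main tools are the equivalence $\Hom(T,-)\colon \add T\to \add\A_t$ from \cref{thm:tiltingendo}, the vanishing $\Ext^{\geq1}(T,T)=0$, and the fact that $\A_t$ has global dimension at most 2. The dimensions needed are $\dim\Hom(T_i,T_j)=\dim\Hom(P(i),P(j))=\min(i,j)$.

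For (2), I apply $\Ext^2(-,E_j)$ to the filtration. I first show $\Ext^2(T_i,-)=0$, since $\projdim T_i\le 1$. From $0\to T_{i-1}\to T_i\to E_i\to 0$ I then get a surjection $\Ext^1(T_{i-1},E_j)\to\Ext^2(E_i,E_j)$. This reduces (2) to showing that $\Ext^1(T_{i-1},E_j)=0$ for $j\le i-1$. That vanishing should follow from $0\to T_{j-1}\to T_j\to E_j\to 0$: the sequence gives $\Ext^1(T_{i-1},T_j)\to\Ext^1(T_{i-1},E_j)\to\Ext^2(T_{i-1},T_{j-1})=0$, and the first term vanishes.

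For (1), I compute the Euler form $\langle E_i,E_j\rangle=\sum(-1)^\ell\dim\Ext^\ell$ bilinearly in the classes $[T_i]-[T_{i-1}]$. Since $\langle T_a,T_b\rangle=\min(a,b)$, this gives
\[
\langle E_i,E_j\rangle=\min(i,j)-\min(i,j-1)-\min(i-1,j)+\min(i-1,j-1),
\]
which equals $0$ for $i>j$. Together with (2), this yields $\dim\Hom(E_i,E_j)=\dim\Ext^1(E_i,E_j)$. It then suffices to show $\dim\Hom(E_i,E_j)=1$. I will compute this directly through the long exact sequences, using $\Hom(T_{i-1},E_i)=0$ (shown in the proof of \cref{thm:excepistauexcep}) and analogous vanishing. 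Alternatively, I will use \cref{thm:HPcharexceptional}: thin modules with $\dim\Hom(P(t),-)=1$ give nonzero maps via the $P(t)$-component, and thinness bounds the dimension by 1. I expect this step to be the main obstacle, namely pinning down exactly 1 rather than $\le 1$.

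For (3) through (5), I use thinness and the fact that $E_i$ and $E_{i-1}$ are subquotients of $P(t)$. For (3), the filtration $T_{i-2}\subset T_{i-1}\subset T_i$ gives a module $T_i/T_{i-2}$ that is an extension of $E_{i-1}$ by $E_i$ and is uniquely determined by the one-dimensional $\Ext^1$. I will compare supports (both intervals in the quiver, containing vertex $t$'s contribution) and argue that a nonzero map between thin indecomposables whose supports overlap in a compatible way is mono or epi. If this fails, I will fall back on the explicit description from \cite{HillePloog2019}. For (4), I note that $E_1=T_1$ and $E_2$ fits into $0\to T_1\to T_2\to E_2\to0$, so $\projdim E_2\le 2$. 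Injectivity of $E_2\to E_1$ together with the dimension count should force $E_2$ to be a submodule of $T_1\subseteq P(1)$, and I then check that $\Ext^2(E_2,-)$ vanishes using the resolution. For (5), I apply $\Hom(-,E_i)$ to $0\to E_i\to E_{i-1}\to\coker f\to0$ and use (1), (2) and $\End(E_i)=K$ to read off $\dim\Ext^1(\coker f,E_i)=1$ from the exact sequence.
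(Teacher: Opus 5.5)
\textbf{There is a genuine gap in part (1), and part (3) as sketched is not a proof.}

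What works: your argument for (2) is correct and self-contained. It even gives $\Ext^2(E_i,E_j)=0$ for all $i,j$, since it only uses $\projdim T\le 1$ and $\Ext^1(T,E_j)=0$. Your Euler-form computation correctly yields $\dim\Hom(E_i,E_j)=\dim\Ext^1(E_i,E_j)$ for $i>j$. The paper simply cites \cite[Cor.~3.7]{HillePloog2019} for (1)--(2) and \cite[Lem.~3.2]{HillePloog2019} for (3).

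The gap in (1): the step you flag as the ``main obstacle'' is the real content of (1), and neither of your sketches supplies it. Your own long exact sequences give
\[ 0\to\Hom(E_i,E_j)\to\Hom(T_i,E_j)\xrightarrow{\ \rho\ }\Hom(T_{i-1},E_j)\to\Ext^1(E_i,E_j)\to 0, \]
where both middle terms are one-dimensional because $\dim\Hom(T_a,E_j)=\min(a,j)-\min(a,j-1)$. So everything hinges on $\rho=(-)\circ h_{i-1}$ being zero. Equivalently, every composite $T_{i-1}\xrightarrow{h_{i-1}}T_i\to T_j$ must factor through $h_{j-1}\colon T_{j-1}\to T_j$.

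This does not follow formally from $\Ext^{\geq1}(T,T)=0$ and $\End(T)\simeq\A_t$; you have to use the structure of $\A_t$. One way is to transport along $\add T\simeq\add\A_t\simeq\add\bigl(\bigoplus_k M(k)\bigr)$. There $h_{i-1}$ becomes an embedding $M(i-1)\hookrightarrow M(i)$ with image $xM(i)$. Hence any composite $M(i-1)\to M(i)\to M(j)$ has image in $xM(j)$, which is the image of $M(j-1)\hookrightarrow M(j)$.

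Your thinness alternative does not close the gap either. Thinness together with $\dim\Hom(P(t),-)=1$ does not by itself produce a nonzero map. For example, in $\mods\A_2$ both $S(2)$ and $\begin{smallmatrix}&2\\1&\end{smallmatrix}$ are exceptional, yet $\Hom(S(2),\begin{smallmatrix}&2\\1&\end{smallmatrix})=0$.

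For (3), the support comparison you propose is not an argument. Nonzero maps between exceptional $\A_t$-modules need not be injective or surjective: in $\mods\A_3$ there is a nonzero map $\begin{smallmatrix}&2&\\1&&3\end{smallmatrix}\to\begin{smallmatrix}&&3\\&2&\\1&&\end{smallmatrix}$ with image $\begin{smallmatrix}&2\\1&\end{smallmatrix}$. So the statement genuinely depends on $E_i,E_{i-1}$ being consecutive terms, which is what the worm-diagram analysis of \cite[Lem.~3.2]{HillePloog2019} handles. Only your fallback, citing that lemma as the paper does, is a proof.

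Parts (4) and (5) are fine up to wording.
\begin{itemize}
\item In (4), injectivity alone makes $E_2$ a submodule of $E_1=T_1\subseteq P(1)$; no dimension count is needed. Then $\Ext^2(E_2,-)\cong\Ext^3(P(1)/E_2,-)=0$ because $\A_t$ has global dimension at most $2$. This is cleaner than the paper's route via uniseriality and the explicit resolution $0\to P(j-1)\to P(j)\to E_2\to 0$.
\item In (5), the inputs you need are $\Hom(E_{i-1},E_i)=0=\Ext^1(E_{i-1},E_i)$ from the definition of an exceptional sequence, exactly as in the paper, not (1) and (2).
\end{itemize}
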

\begin{proof}
    (1) and (2) are \cite[Cor. 3.7]{HillePloog2019}. (3) holds for worm diagrams by \cite[Lem. 3.2]{HillePloog2019}, which are equivalent to complete exceptional sequences by \cite[Prop. 3.3, Cor. 3.6]{HillePloog2019}. (4) The exceptional module $E_1$ is a submodule of $P(1)$, which is uniserial and thin with socle $S(t)$. If $E_2 \to E_1$ is injective, then $E_2$ is also uniserial and thin with socle $S(t)$. Notice that $\topp(E_2) \not \simeq S(1)$, as otherwise $E_2 \simeq P(1)$ leads to a contradiction. Let $\topp(E_2) \simeq S(j)$ for some $j \in \{2, \dots, t\}$. It is easily checked that its minimal projective resolution is given by 
    \[ 0 \to P(j-1) \to P(j) \to E_2 \to 0. \] 
    (5) Applying $\Hom(-,E_i)$ to the short exact sequence defining $\coker f$ yields an exact sequence
    \begin{equation}\label{eq:cseqpropcoker} 0 = \Hom(E_{i-1},E_i) \to \Hom(E_i,E_i) \to \Ext^1(\coker f, E_i) \to \Ext^1(E_{i-1},E_i) = 0\end{equation}
    where the outermost terms are zero by the defining property of exceptional sequences. Since $E_i$ is exceptional, the equality $\dim \Hom(E_i, E_i)=1$ holds and the result follows from the isomorphism $\Hom(E_i, E_i) \simeq \Ext^1(\coker f, E_i)$ in \cref{eq:cseqpropcoker}.
\end{proof}

The dual of \cite[Lem. 4.7, Cor. 4.8]{HillePloog2019} is then readily proved as follows.
\begin{lemma}\label{lem:excepmutdesc}
    Let $\Ecal = (E_t, \dots, E_1)$ be a complete exceptional sequence in $\mods \A_t$ and let $i \in \{2, \dots, t\}$. The following are equivalent:
    \begin{enumerate}
        \item any nonzero morphism $f: E_{i} \to E_{i-1}$ is injective;
        \item $\psi_i(\Ecal)$ is a complete exceptional sequence in $\mods \A_t$.
    \end{enumerate} 
    In this case $\Lcal_{E_i} E_{i-1} \in \mods \A_t$ is defined by the short exact sequence
    \begin{equation}\label{eq:leftmutseq} 0 \to \coker f \to \Lcal_{E_i} E_{i-1} \to E_i \to 0\end{equation}
    in $\mods \A_t$.
\end{lemma}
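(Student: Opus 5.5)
The plan is to compute $\RHom(E_i,E_{i-1})$ explicitly from \cref{lem:completeexcepprop}, read off the cone in \eqref{eq:leftmutexcep}, and decide when it is concentrated in degree $0$. By \cref{lem:completeexcepprop}(1),(2), $\Hom(E_i,E_{i-1})\simeq K$, $\Ext^1(E_i,E_{i-1})\simeq K$ and $\Ext^2(E_i,E_{i-1})=0$. Hence
\[ \RHom(E_i,E_{i-1})\otimes E_i \simeq E_i \oplus E_i[-1], \]
and the canonical map $E_i\oplus E_i[-1]\to E_{i-1}$ consists of a nonzero morphism $f\colon E_i\to E_{i-1}$ together with the class $\eta\in\Ext^1(E_i,E_{i-1})$ viewed as a map $E_i[-1]\to E_{i-1}$.

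I would first compute the cone in two stages. Let $C$ be the cone of $f$, and use the octahedral axiom to get a triangle $E_i\to C\to \Lcal_{E_i}E_{i-1}\to$, where the first map is the composite $E_i\xrightarrow{\eta}E_{i-1}[1]\to C[1]$, shifted appropriately. By \cref{lem:completeexcepprop}(3), $f$ is injective or surjective, which gives two cases.

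If $f$ is injective, then $C\simeq\coker f$ in degree $0$. The remaining map is a class in $\Ext^1(E_i,\coker f)$, so the cone $\Lcal_{E_i}E_{i-1}$ is the extension $0\to\coker f\to \Lcal_{E_i}E_{i-1}\to E_i\to 0$. This already produces \eqref{eq:leftmutseq} and shows that the mutation lies in $\mods \A_t$. Since left mutation of a complete exceptional sequence is always a complete exceptional sequence in $\Dcal^b(\mods\A_t)$ by \cite{Gorodentsev1990}, this gives (1)$\Rightarrow$(2).

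If $f$ is surjective and not injective, then $C\simeq(\ker f)[1]$. The cone $\Lcal_{E_i}E_{i-1}$ then sits in a triangle $E_i\to(\ker f)[1]\to\Lcal_{E_i}E_{i-1}\to$, so its cohomology lies in degrees $-1$ and $0$. I expect the cohomology in degree $-1$ to be nonzero, which makes the object non-isomorphic to a module and gives (2)$\Rightarrow$(1) by contraposition. Concretely, the long exact sequence gives $H^{-1}(\Lcal_{E_i}E_{i-1})\simeq\ker\bigl(E_i\to H^0((\ker f)[1])\bigr)$. But $H^0((\ker f)[1])=0$, so $H^{-1}\simeq E_i\neq 0$, and then $H^0\simeq \ker f$. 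In this case the mutation is not a module, or at best a module only up to shift; in either case $\psi_i(\Ecal)$ is not a sequence in $\mods \A_t$.

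The step I expect to be the main obstacle is the injective case. There I must check that the extension class in $\Ext^1(E_i,\coker f)$ is the one induced by $\eta$, and that it is nonzero, so that $\Lcal_{E_i}E_{i-1}$ is the indecomposable middle term and the sequence does not split. Apply $\Hom(E_i,-)$ to $0\to E_i\to E_{i-1}\to\coker f\to0$. Since $\Ext^1(E_i,E_i)=0$, the map $\Ext^1(E_i,E_{i-1})\to\Ext^1(E_i,\coker f)$ is injective, so the image of $\eta$ is nonzero. \cref{lem:completeexcepprop}(5) supplies the analogous one-dimensionality of $\Ext^1(\coker f,E_i)$, which pins the extension down up to scalar. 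These checks would complete the identification of \eqref{eq:leftmutseq} and the proof of the equivalence.
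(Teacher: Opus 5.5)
Your strategy is the paper's: use \cref{lem:completeexcepprop}(1),(2) to get $\RHom(E_i,E_{i-1})\otimes E_i\simeq E_i\oplus E_i[-1]$, split into cases by \cref{lem:completeexcepprop}(3), and test whether the cone $L:=\Lcal_{E_i}E_{i-1}$ is concentrated in degree $0$. The paper does this in one step, from the long exact cohomology sequence of \eqref{eq:leftmutexcep},
\[ 0\to H^{-1}(L)\to E_i\xrightarrow{f}E_{i-1}\to H^0(L)\to E_i\to 0, \]
which gives $H^{-1}(L)\simeq\ker f$ and \eqref{eq:leftmutseq} at the same time. Your octahedral detour is a valid alternative. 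It also identifies the class of \eqref{eq:leftmutseq} as the nonzero image of $\eta$ in $\Ext^1(E_i,\coker f)$, so it shows the sequence is nonsplit. The lemma does not ask for that, so the step you flag as the main obstacle is not actually needed.

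There is a concrete error in the surjective case. Apply the octahedral axiom to $E_i\to E_i\oplus E_i[-1]\xrightarrow{(f,\eta)}E_{i-1}$. It yields $E_i[-1]\to C\to L\to E_i$, i.e.\ $C\to L\to E_i\to C[1]$ with connecting map $E_i\xrightarrow{\eta}E_{i-1}[1]\to C[1]$. It does not yield $E_i\to C\to L\to$.

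You implicitly use the correct orientation in the injective case, but in the surjective case you use the unshifted one. The values you obtain, $H^{-1}(L)\simeq E_i$ and $H^0(L)\simeq\ker f$, are wrong. They do not even follow from the triangle you wrote, which would give $H^0(L)=0$ and $0\to\ker f\to H^{-1}(L)\to E_i\to 0$. From $(\ker f)[1]\to L\to E_i\to(\ker f)[2]$ one gets $H^{-1}(L)\simeq\ker f$ and $H^0(L)\simeq E_i$.

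Your conclusion $H^{-1}(L)\neq 0$ survives, but for a different reason. It holds exactly because $f$ is not injective, and that is the real content of (2)$\Rightarrow$(1). In your version the nonvanishing came from $E_i\neq 0$ and never used the non-injectivity of $f$. That should have been a warning sign.

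Finally, \cref{lem:completeexcepprop}(5) concerns $\Ext^1(\coker f,E_i)$, which is the wrong direction. The extension $0\to\coker f\to L\to E_i\to 0$ is classified by $\Ext^1(E_i,\coker f)$. That group is also one-dimensional: apply $\Hom(E_i,-)$ to $0\to E_i\xrightarrow{f}E_{i-1}\to\coker f\to 0$ and use $\Ext^1(E_i,E_i)=0=\Ext^2(E_i,E_i)$ to get $\Ext^1(E_i,\coker f)\simeq\Ext^1(E_i,E_{i-1})\simeq K$. You need no dimension count at all, though, because the octahedral axiom already gives you the class.
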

\begin{proof}
    This statement is unambiguous because $\dim \Hom(E_{i}, E_{i-1})=1$ by \cref{lem:completeexcepprop}(1) and therefore $\coker f_1 \simeq \coker f_2$ for any two $f_1, f_2 \in \Hom(E_i, E_{i-1})$. 

    The triangle defining $\Lcal_{E_i} E_{i-1}$, given by \cref{eq:leftmutexcep}, gives rise to a long exact sequence in cohomology
    \begin{equation}\begin{aligned}\label{eq:longexactcoho} 0 = H^{-1}(E_{i-1}) \to H^{-1}(\Lcal_{E_i} E_{i-1}) \to \Hom(E_i, E_{i-1})\otimes E_i \to E_{i-1} \to H^0(\Lcal_{E_i} E_{i-1}) \\
    \to \Ext^1(E_i, E_{i-1}) \otimes E_i \to H^1(E_{i-1}) = 0
    \end{aligned}\end{equation}
    using the fact that $E_{i-1}$ is concentrated in degree zero in $\Dcal^b(\mods \A_t)$ to get $H^{\neq 0}(E_{i-1})= 0$; the fact that $\Ext^2(E_i, E_{i-1})= 0$ by \cref{lem:completeexcepprop}(2) and the fact that $\Ext^{\geq 3}(E_i, E_{i-1}) = 0$ since $\A_t$ is of global dimension 2 to conclude that $\Lcal_{E_i} E_{i-1}$ is concentrated degrees -1 and 0. 

    Moreover, since $\dim \Hom(E_i, E_{i-1})=1 = \Ext^1(E_i, E_{i-1})$ by \cref{lem:completeexcepprop}(1), the tensor products in \cref{eq:longexactcoho} simplify to yield the exact sequence
    \begin{equation}\label{eq:simplifiedexcepdesc} 0 \to H^{-1}(\Lcal_{E_i} E_{i-1}) \to E_i \to E_{i-1} \to H^0(\Lcal_{E_i} E_{i-1}) \to E_i \to 0. \end{equation}

    Note that $E_i \to E_{i-1}$ is nonzero in \cref{eq:simplifiedexcepdesc}, so that \cref{lem:completeexcepprop}(3) implies that it is either surjective or injective. If it is injective it follows that $H^{-1}(\Lcal_{E_i} E_{i-1})=0$ and $\Lcal_{E_i} E_{i-1} \in \mods \A_t$ has the desired description, and implying that $\psi_i(\Ecal)$ is a complete exceptional sequence in $\mods \A_t$. 

    Conversely, if $\psi_i(\Ecal)$ is a complete exceptional sequence in $\mods \A_t$, then in particular $\Lcal_{E_i} E_{i-1} \in \mods \A_t$, implying $H^{-1}(\Lcal_{E_i} E_{i-1}) = 0$, consequently $f: E_i \to E_{i-1}$ is injective. This concludes the proof.
\end{proof}

In view of the previous proposition and its dual, see \cite[Lem. 4.7, Cor. 4.8]{HillePloog2019}, the following short-hand notation will be used throughout.

\begin{definition}\label{def:psimut}
    Let $\Ecal= (E_t, \dots, E_1)$ be a complete exceptional sequence in $\mods \A_t$ and let $i \in \{2, \dots, t\}$. Say that 
    \begin{itemize}
        \item $\Ecal$ can be left $\psi$-mutated at (position) $i$ if any nonzero $E_{i} \to E_{i-1}$ is injective.
        \item $\Ecal$ can be right $\psi$-mutated at (position) $i$ if any nonzero $E_{i} \to E_{i-1}$ is surjective. 
    \end{itemize}
    The corresponding $\psi$-mutation at (position) $i$ is as defined in \cref{def:completeexcepmut}.
\end{definition}

\subsection{Mutation of $\tau$-exceptional sequences}
Before discussing the mutation of $\tau$-exceptional sequences and comparing it to the mutation of exceptional sequences further notation is required. Let $\Ccal \subseteq \mods \A_t$ be a full subcategory and let $\Tcal$ be a torsion class in $\mods \A_t$. Denote by $\Trm(\Ccal)$ the smallest torsion class of $\mods \A_t$ containing $\Ccal$ and denote by $\Prm(\Tcal) \in \mods \A_t$ the basic module isomorphic to a direct sum all indecomposable $\Ext$-projective objects in $\Tcal$. Similarly, denote by $\Prm_{\ssplit}(\Ccal)$ and $\Prm_{\nsplit}(\Ccal)$ the maximal direct summands of $\Prm(\Tcal)$ consisting of split and nonsplit $\Ext$-projective objects respectively.

The theory of mutation of $\tau$-exceptional sequences was introduced in \cite{BHM2024}. Similar to the mutation of exceptional sequences, mutation is an operation on $\tau$-exceptional pairs, that is, on $\tau$-exceptional sequence of length 2. Left and right mutations of $\tau$-exceptional pairs in $\mods \A_t$ are divided into two cases. 

\begin{definition}\label{defn:leftrightregular} \cite[Def. 3.11]{BHM2024}
    A $\tau$-exceptional pair $(B,C)$ in $\mods \A_t$ is called \textit{left regular} if $C$ is projective in $\mods \A_t$ or if $C \not \in \Prm( {}^\perp \tau (f_C^{-1}(B)))$. Otherwise, it is called \textit{left irregular}.
    
    A $\tau$-exceptional pair $(B,C)$ in $\mods \A_t$ is called \textit{right regular} if $f_C^{-1}(B) \in \Prm({}^\perp \tau C)$ or if $C \not \in \Gen(f_C^{-1}(B))$. Otherwise, it is called \textit{right irregular}.
\end{definition}

It is important to remark that over a general finite dimensional algebra $\Lambda$, there may exist (left or right irregular) $\tau$-exceptional sequences which are (left or right) immutable. However, the algebra $\A_t$ has only finitely many isomorphism classes of basic $\tau$-tilting modules by \cite[Thm. 1.2]{IyamaZhang2020}. Therefore, every $\tau$-exceptional pair of $\mods \A_t$ can be left and right mutated by \cite[Thm. 0.2]{BHM2024}. To describe this process, the following bijection extending that of \cref{rmk:notationjustification} is important, see \cite[Prop. 5.6]{BuanMarsh2018t} and \cite[Sec. 3]{BuanHanson2023}. Although the precise expression of this bijection is not used in the remaining proofs of this paper, it is included for the sake of completeness.

\begin{theorem}\cite[Thm. 2.16]{BHM2024}
	Let $(T,P)$ be a $\tau$-rigid pair in $\mods \A_t$. Then there is a bijection
	\[ \begin{tikzcd}
	 \{ \text{indecomposable } M \in \mods \A_t \sqcup \mods \A_t[1]: M \oplus T \oplus P[1] \text{ is a $\tau$-rigid pair in $\mods \A_t$} \} \arrow[d, "\Erm_{T \oplus P[1] }"] \\
	 \{ \text{indecomposable } N \in J(T,P) \sqcup J(T,P)[1]: N \text{ is a $\tau$-rigid pair in $J(T,P)$} \}
	 \end{tikzcd}
	 \]
	 given by
	 \[ \Erm_{T \oplus P[1]}(M) = \begin{cases}  \Prm_{\ssplit}(J(T,P) \cap {}^\perp J(T \oplus M \oplus P[1])) & \text{ if $M \in \mods \A_t$ or $M \not \in \Gen T$;}\\
	 \Prm_{\ssplit}(J(T,P) \cap {}^\perp J(T \oplus M \oplus P[1]))[1] & \text{ otherwise.}  \end{cases} \]
\end{theorem}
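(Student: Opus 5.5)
The plan is to build $\Erm_{T \oplus P[1]}$ out of two known correspondences and then check that the explicit formula computes their composite. The first ingredient is Jasso's reduction \cite{Jasso2015}. The support $\tau$-tilting pairs containing $T \oplus P[1]$ are in bijection with the support $\tau$-tilting pairs of the abelian category $J(T,P)$. On torsion classes this bijection is the interval map $[\Gen T, {}^\perp \tau T \cap P^\perp] \to \operatorname{tors} J(T,P)$, given by $\mathcal{U} \mapsto \mathcal{U} \cap J(T,P)$.

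Applying this to an indecomposable $M$ as in the statement, the pair $T \oplus M \oplus P[1]$ is $\tau$-rigid with one more summand. By the composition rule $J(T \oplus M \oplus P[1]) = J^{J(T,P)}(\text{reduction of } M)$ (\cite[Thm. 1.4]{BuanMarsh2021w}, \cite[Thm. 6.4]{BuanHanson2023}), the category $\mathcal{W} \coloneqq J(T \oplus M \oplus P[1])$ is a wide subcategory of $J(T,P)$ of corank one. So the first step is to define the candidate image $N$ of $M$ in $J(T,P) \sqcup J(T,P)[1]$ as the summand corresponding to $M$ under Jasso's reduction. This gives a well-defined assignment, and it is injective because Jasso's map is bijective on completions.

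The second step, entirely inside the abelian category $\mathcal{J} \coloneqq J(T,P)$, is to recover $N$ from $\mathcal{W} = J^{\mathcal{J}}(N)$ together with the knowledge of whether $N$ is a module or a shifted projective.
\begin{itemize}
\item If $N$ is a module, I would show $\mathcal{J} \cap {}^\perp \mathcal{W} = \Trm(N)$, which should equal $\Gen N$ inside $\mathcal{J}$. Since $N$ is indecomposable $\tau$-rigid, its split $\Ext$-projective part is exactly $N$; this is the Buan--Marsh argument \cite[Prop. 5.6]{BuanMarsh2018t}.
\item If $N = Q[1]$, then $\mathcal{W} = Q^\perp \cap \mathcal{J}$, and $\mathcal{J} \cap {}^\perp \mathcal{W}$ is the torsion class generated by $Q$. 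Its unique split projective is again $Q$, so shifting recovers $N$.
\end{itemize}
This is why the formula takes $\Prm_{\ssplit}(\mathcal{J} \cap {}^\perp \mathcal{W})$ and adds the shift $[1]$ in the appropriate case. That case is decided by whether the reduction of $M$ is a shifted projective, that is, whether $M \in \mods \A_t$ and $M \not\in \Gen T$ (so that $f_T(M)$ is a genuine module), or not.

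The third step is surjectivity. Given an indecomposable $\tau$-rigid pair $N$ in $\mathcal{J}$, complete it to a support $\tau$-tilting pair in $\mathcal{J}$. Lift this completion through Jasso's bijection, and take $M$ to be the summand of the lift that is not in $\add(T \oplus P[1])$. Then check that $M$ maps to $N$, using the compatibility of the reduction with the composition rule for $J$.

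I expect the main obstacle to be the identification $\mathcal{J} \cap {}^\perp J^{\mathcal{J}}(N) = \Trm(N)$ in the module case, together with the claim that its split projective part is exactly $N$ rather than a larger module. For the first part I would argue via the torsion pair $(\Gen N, N^\perp)$ in $\mathcal{J}$ and the fact that $J^{\mathcal{J}}(N) = N^\perp \cap {}^\perp \tau_{\mathcal{J}} N$ is wide of corank one. For the second part, the remaining Ext-projectives of $\Gen N$ are nonsplit, and these come from the Bongartz-type completion.
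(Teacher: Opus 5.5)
The paper gives no proof of this statement. It is quoted from \cite[Thm.~2.16]{BHM2024}, which builds on \cite[Prop.~5.6]{BuanMarsh2018t} and \cite[Sec.~3]{BuanHanson2023}, so there is nothing internal to compare against. You correctly read the first case as ``$M\in\mods\A_t$ \emph{and} $M\notin\Gen T$''; with the printed ``or'', the second case would be empty. Taken on its own, however, your proposal has a genuine gap in Step~1.

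Jasso's reduction is a bijection between \emph{complete} support $\tau$-tilting pairs containing $T\oplus P[1]$ and those of $\mathcal{J}=J(T,P)$. Equivalently, it is the map $\mathcal{U}\mapsto\mathcal{U}\cap\mathcal{J}$ on torsion classes. It transports individual summands only for modules outside $\Gen T$, via $f_T$:
\begin{itemize}
\item summands of a completion lying in $\Gen T$ are killed by $f_T$;
\item shifted summands $Q[1]$ do not enter at all;
\item the reduced pair simply acquires a shifted projective part determined by its torsion class, with no given matching to the lost summands.
\end{itemize}
So ``the summand corresponding to $M$'' is undefined exactly in the cases where the formula carries a shift. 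Showing that such a matching exists and is independent of the completion is the real content of the theorem there. Hence ``well-defined'' and ``injective because Jasso's map is bijective on completions'' are unjustified. Step~3 has the same defect: the lift has $t-|T|-|P|$ summands outside $\add(T\oplus P[1])$, so ``the summand of the lift'' is not determined. There is also a circularity. Beyond the $f_T$-case, the composition rule of \cite[Thm.~1.4]{BuanMarsh2021w} and \cite[Thm.~6.4]{BuanHanson2023} is formulated for the map $\Erm_{T\oplus P[1]}$ itself, so you cannot invoke it to construct that map. The same applies to citing \cite[Prop.~5.6]{BuanMarsh2018t} in Step~2.

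The missing idea is to attach the image of $M$ to its whole \emph{face} of completions rather than to a single summand.
\begin{itemize}
\item The completions of $T\oplus M\oplus P[1]$ are the torsion classes in $[\Gen(T\oplus M),{}^\perp\tau(T\oplus M)\cap P^\perp]$, or in $[\Gen T,{}^\perp\tau T\cap(P\oplus Q)^\perp]$ if $M=Q[1]$.
\item If $M\in\Gen T$ or $M=Q[1]$, the lower end meets $\mathcal{J}\subseteq T^\perp$ in $0$. The upper end meets $\mathcal{J}$ in exactly $\mathcal{W}=J(T\oplus M\oplus P[1])$; for $M\in\Gen T$ use $T^\perp\subseteq M^\perp$. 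So the face maps onto $[0,\mathcal{W}]$.
\item Then $\mathcal{W}$ is both a torsion class of $\mathcal{J}$ and wide, hence closed under subobjects, hence Serre of corank one. So $\mathcal{W}=Q'^\perp\cap\mathcal{J}$ for a unique indecomposable projective $Q'$ of $\mathcal{J}$, and $[0,Q'^\perp\cap\mathcal{J}]$ is exactly the set of completions of $Q'[1]$.
\item For a module $M\notin\Gen T$, the face maps onto the completions of $f_T(M)$ \cite{Jasso2015}.
\end{itemize}
Your Step~2 then identifies this $N$ with the $\Prm_{\ssplit}$-formula. Its module case, which you flagged as the main obstacle, does hold. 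Every upper cover of $\Gen N$ in $\operatorname{tors}\mathcal{J}$ still has the split summand $N$, so it lies in ${}^\perp\tau_{\mathcal{J}}N$ and its brick label lies in $J^{\mathcal{J}}(N)$. This forces $\mathcal{J}\cap{}^\perp J^{\mathcal{J}}(N)=\Gen N$.

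Injectivity follows because a face determines its extra summand. If every completion of $T\oplus M_1\oplus P[1]$ contained some $M_2\neq M_1$, mutating one completion at $M_2$ would give a contradiction. For surjectivity, lift a completion $Z$ of $N$. The extra summands of the lift are sent injectively to summands of $Z$, hence onto all of them, including $N$.
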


Let $(B,C)$ be a $\tau$-exceptional pair in $\mods \A_t$, the left and right mutations are introduced following \cite[Sec. 4]{BHM2024}. Assume that $(B,C)$ is left regular. Then the \textit{left mutation}, also called \textit{left $\varphi$-mutation} to distinguish it from the $\psi$-mutation of exceptional sequences, of $(B,C)$ is the $\tau$-exceptional pair $\varphi(B,C) \coloneqq (\widehat{C}, \widehat{B})$, where 
\[ \widehat{B} = \begin{cases} B & \text{ if $C$ is projective;} \\ f_C^{-1}(B) & \text{ otherwise;} \end{cases} \quad \text{and} \quad \widehat{C} = \begin{cases} \lfloor \Erm_{B}(C[1]) \rfloor & \text{ if $C$ is projective;} \\ \lfloor \Erm_{\widehat{B}}(C) \rfloor & \text{ otherwise.} \end{cases} \]

In the expression for $\widehat{C}$, the objects $\Erm_B(C[1])$ and $\Erm_{\widehat{B}}(C)$ may be $\tau$-rigid pairs of the form $(0,P)$ for some projective $P\in \mods \A_t$ and applying $\lfloor - \rfloor$ means considering them as $\tau$-rigid modules $P \in \mods \A_t$. Note that the description of $\widehat{C}$ will not be necessary for proving the main results in this paper.

Assume now that $(B,C)$ is left irregular and define 
\[ X = \Prm_{\ssplit}(\Gen \Prm_{\nsplit}(\Trm(J(f_C^{-1}(B) \oplus C)))) \quad \text{and} \quad Y = \Prm_{\nsplit}(\Trm(J(f_C^{-1}(B) \oplus C)))/X.\]
The left $\varphi$-mutation of the $\tau$-exceptional pair $(B,C)$ is given by $\varphi(B,C) \coloneqq (f_Y(X), Y)$ in this case.

Now consider the right mutation of $\tau$-exceptional sequences. By \cite[Thm. 4.7]{BHM2024}, right mutation is inverse to left mutation. Assume that $(B,C)$ is right regular. The \textit{right ($\varphi$-)mutation} of $(B,C)$ is defined as $\varphi^{-1}(B,C) = (\overline{C}, \overline{B})$, where
\[ \overline{C} = \begin{cases} \Erm_{\Erm_C^{-1}(B[1])}(C) & \text{ if $B$ is projective in $J(C)$;} \\ \Erm_{\overline{B}}(C) & \text{ otherwise;} \end{cases} \quad \text{and} \quad \overline{B} = \begin{cases} |\Erm_C^{-1}(B[1])| & \text{ if $B$ is projective in $J(C)$;} \\ f_C^{-1}(B) & \text{otherwise.}\end{cases}\]

For the algebra $\A_t$, the first case in the definition of $\overline{C}$ can be drastically simplified with the help of the following.
\begin{lemma}\label{lem:rightmuthelp}
	Let $(E_t, \dots, E_1)$ be a complete exceptional sequence in $\mods \A_t$ such that $E_2$ is projective in $J(E_1)$. Then $\varphi^{-1}(E_2, E_1) = (E_1, ?)$ for some $? \in \mods A_t$.
\end{lemma}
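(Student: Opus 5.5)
The plan is to show that the $\tau$-exceptional pair $(E_2,E_1)$ is right regular. Then the first case of the definition of right $\varphi$-mutation applies, and the first entry $\overline{C}$ of $\varphi^{-1}(E_2,E_1)$ can be computed directly and shown to equal $E_1$.

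By \cref{thm:excepistauexcep} we may write $(E_t,\dots,E_1)=\Phi(T)=(T_t/T_{t-1},\dots,T_2/T_1,T_1)$ for a basic tilting module $T$. Hence $C=E_1=T_1$, $B=E_2=T_2/T_1\simeq f_{T_1}(T_2)$ by \cref{lem:torfdesc}, and so $f_C^{-1}(B)=T_2$ by the bijection of \cref{rmk:notationjustification}.

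\textbf{Step 1: right regularity.} We must show $T_2\in \Prm({}^\perp\tau T_1)$ or $T_1\notin\Gen T_2$. The first route is to prove $T_2\in\Prm({}^\perp\tau T_1)$. Since $T$ is $\tau$-tilting, $T\in{}^\perp\tau T_1$. The Ext-projectives of ${}^\perp\tau T_1$ form the Bongartz completion of $T_1$. So it suffices to check that $T_2$ is Ext-projective there, that is, $\Hom(X,\tau T_2)=0$ for all $X\in{}^\perp\tau T_1$.

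Here I would exploit the hypothesis that $B=T_2/T_1$ is projective in $J(T_1)$. Together with \cref{lem:completeexcepprop}(4) (if $E_2\to E_1$ is injective) or the explicit structure $T_1\subseteq T_2\subseteq P(2)$ coming from \cref{thm:idealsemigroup}, this should force $T_1\oplus T_2$ to be a summand of the Bongartz completion. If this fails, the fallback is to prove $T_1\notin\Gen T_2$ directly. Every map $T_2\to T_1$ factors through the approximation of \cref{lem:tiltapprox}(2) and is far from surjective, because $\dim\Hom(P(t),T_2)=2>1$ forces a nonzero kernel, while the socle/top structure of the thin submodule $T_1\subseteq P(1)$ prevents $T_1$ from being a quotient of $T_2^r$. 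I expect Step 1 to be the main obstacle.

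\textbf{Step 2: computing $\overline{C}$.} With right regularity and $B$ projective in $J(C)$, the definition gives $\overline{C}=\Erm_{\Erm_C^{-1}(B[1])}(C)$. Since $B[1]$ is a shifted projective of $J(C)$, the preimage $\Erm_C^{-1}(B[1])$ is a shifted projective $P[1]$ with $P\in\mods\A_t$ indecomposable projective and $(C,P)$ a $\tau$-rigid pair, so $\Hom(P,C)=0$.

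Then $J(0,P)=P^\perp\simeq\mods \A_t/\langle e\rangle$ for the corresponding idempotent $e$, and $C\in P^\perp$. Reduction with respect to $P[1]$ does not alter modules lying in $P^\perp$: $\Gen 0=0$, and a module of $\mods\A_t/\langle e\rangle$ that is $\tau$-rigid in $\mods\A_t$ remains $\tau$-rigid there. Explicitly, $J(0,P)\cap{}^\perp J(C\oplus P[1])$ is the torsion class $\Gen C$ computed inside $P^\perp$, whose split Ext-projective part is $C$. Therefore $\Erm_{P[1]}(C)=C$, so $\overline{C}=C=E_1$ and $\varphi^{-1}(E_2,E_1)=(E_1,\overline{B})$ as claimed.
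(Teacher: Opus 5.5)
Your proposal has a genuine gap in Step 2. You assert that, because $B[1]$ is a shifted projective of $J(C)$, its preimage $\Erm_C^{-1}(B[1])$ must be a shifted projective $P[1]$. The $\Erm$-map does not work that way. Besides the $P[1]$ with $\Hom(P,C)=0$, it also sends every indecomposable module $X\in\Gen C$ with $X\oplus C$ $\tau$-rigid to a shifted projective of $J(C)$; this is the ``otherwise'' clause in its definition. Counting makes this unavoidable: $J(C)$ has $t-1$ indecomposable projectives, but only the $P(j)$ with $\Hom(P(j),C)=0$ are available as shifted projectives. So when $E_1$ is not simple, some projectives of $J(E_1)$ must come from modules in $\Gen E_1$. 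Already for $t=2$ and the exceptional pair $(E_2,E_1)=(S(2),P(1))$, we have $J(P(1))=\add S(2)$, and no indecomposable projective has $\Hom(P,P(1))=0$. Hence $\Erm_{P(1)}^{-1}(S(2)[1])=S(1)\in\Gen P(1)$. Your computation $\Erm_{P[1]}(C)=C$ is correct, but it only covers the first case, which is also the paper's first case.

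The missing ingredient is the case $X\in\Gen E_1$. There $E_1\notin\Gen X$ by \cite[Lem. 1.12]{BHM2024}, so $\Erm_X(E_1)=f_X(E_1)$, and one must show $\Hom(X,E_1)=0$. Suppose $X\to E_1$ is nonzero. It cannot be surjective, since $E_1\notin\Gen X$. Precomposing it with an epimorphism $E_1^r\twoheadrightarrow X$ then gives a nonzero, non-invertible endomorphism of $E_1$, contradicting $\End(E_1)\simeq K$. This is the only point where exceptionality of $E_1$ is used, and your argument never uses it. That is a warning sign, because the conclusion fails for general $\tau$-exceptional pairs. In $\mods\A_2$, take the pair $(S(1),P(2))$: $S(1)$ is projective in $J(P(2))=\add S(1)$ and the pair is right regular. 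However, $\Erm_{P(2)}^{-1}(S(1)[1])=S(2)$ with $\Hom(S(2),P(2))\neq 0$. Accordingly $\varphi^{-1}(S(1),P(2))=(\begin{smallmatrix}2\\1\end{smallmatrix},S(2))$, which does not begin with $P(2)$ (compare \cref{fig:introfigex}).

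Step 1 is not the obstacle you expected. By Jasso's reduction (\cite[Prop. 3.14]{Jasso2015}), the indecomposable projectives of $J(C)$ are exactly the $f_C(Q)$, where $Q$ runs over the indecomposable summands of $\Prm({}^\perp\tau C)$ other than $C$. So ``$B$ projective in $J(C)$'' already means $f_C^{-1}(B)\in\Prm({}^\perp\tau C)$, which is the first clause of right regularity. Your fallback claim $T_1\notin\Gen T_2$ is false in general. For $t=3$ and $T=P(3)\oplus\begin{smallmatrix}1&&3\\&2&\\&&3\end{smallmatrix}\oplus S(3)$, the hypothesis holds, yet $T_2$ surjects onto $S(3)=T_1$.
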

\begin{proof}
	First, consider the expression $X\coloneqq \Erm_{E_1}^{-1}(E_2[1])$ as it appears in the subscript of the definition of $\overline{E_1}$ above. By definition of the E-map, see for example \cite[Def. 3.5]{BKT2025} or the proof of \cite[Def.-Prop. 4.5]{BHM2024}, there are two cases to consider. The first case is $X= P[1]$ for some projective $P \in \mods \A_t$. In this case the definition of the E-map simply yields $\Erm_X(E_1)=E_1$ as required. 
	
	The second case is $X \in \Gen E_1$. It follows then that $E_1 \not \in \Gen X$ by \cite[Lem. 1.12]{BHM2024}. The definition of the E-map gives that $\Erm_X(E_1) = f_X (E_1)$. Assume for a contradiction that $\Hom(X, E_1) \neq 0$. Because $X \in \Gen E_1$, there then exists an endomorphism of $E_1$ which is not an isomorphism. This is a contradiction to $E_1$ being exceptional. Consequently $\Hom(X, E_1) = 0$ from which $f_{X} (E_1) = E_1$, and thus the desired result, follows.
\end{proof}

The definition of right $\varphi$-mutation for right irregular $\tau$-exceptional pairs is omitted because it will not be required to prove the main results herein.

\begin{definition}\label{def:completetauexcepmut}
    Let $\mathcal{M} = (M_t, \dots, M_1)$ be a complete $\tau$-exceptional sequence. The \textit{left $\varphi$-mutation of $\mathcal{M}$ at (position) $i \in \{2, \dots, t\}$} is defined to be $\varphi_i(\mathcal{M}) = (M_t, \dots, M_{i+1}, \widehat{M}_{i-1}, \widehat{M}_i, M_{i-2}, \dots, M_1)$, where
    \[ (\widehat{M}_{i-1}, \widehat{M}_{i}) = \varphi^{J(M_{i-2}, \dots, M_1)}(M_i, M_{i-1})\]
    is the left $\varphi$-mutation of the $\tau$-exceptional pair in the subcategory $J(M_{i-2}, \dots, , M_1)$.
    The \textit{right $\varphi$-mutation of $\mathcal{M}$ at (position) $i \in \{2, \dots, t\}$} is defined to be $\varphi_i^{-1}(\mathcal{M}) = (M_t, \dots, M_{i+1}, \overline{M}_{i-1}, \overline{M}_i, M_{i-2}, \dots, M_1)$, where
    \[(\overline{M}_{i-1}, \overline{M}_i) = (\varphi^{-1})^{J(M_{i-2}, \dots, M_1)}(M_i, M_{i-1}).\]
\end{definition}

The following theorem justifies that the mutation of $\tau$-exceptional sequences described in this subsection is well-defined.
\begin{theorem}\cite[Thm. 0.1, Cor. 0.4]{BHM2024}\label{thm:BHMmut}
    The left $\varphi$-mutation of a complete $\tau$-exceptional sequence in $\mods \A_t$ is again a complete $\tau$-exceptional sequence in $\mods \A_t$. Similarly, the right $\varphi$-mutation of a complete $\tau$-exceptional sequence in $\mods \A_t$ is again a complete $\tau$-exceptional sequence in $\mods \A_t$. Moreover right $\varphi$-mutation is inverse to left $\varphi$-mutation and vice versa.
\end{theorem}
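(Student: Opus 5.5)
This statement is the specialisation of \cite[Thm. 0.1, Cor. 0.4]{BHM2024} to $\mods \A_t$, so my plan is to check that $\A_t$ satisfies the hypotheses of those results and then to reduce the statement about complete sequences to a statement about $\tau$-exceptional pairs. I will not re-derive the machinery of \cite{BHM2024}.

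The first step is finiteness. $\A_t$ has only finitely many basic $\tau$-tilting modules by \cite[Thm. 1.2]{IyamaZhang2020}. The same holds for every $\tau$-perpendicular subcategory $J(M_{i-2}, \dots, M_1)$. By \cite{Jasso2015} such a subcategory is equivalent to $\mods \Gamma$ for a finite dimensional algebra $\Gamma$. Its torsion classes are in bijection with an interval of the finite lattice of torsion classes of $\A_t$. Hence $\Gamma$ is again $\tau$-tilting finite. Consequently every $\tau$-exceptional pair in such a subcategory is left and right mutable, regular or irregular, by \cite[Thm. 0.2]{BHM2024}.

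The second step reduces to pairs. Let $\mathcal{M}=(M_t,\dots,M_1)$ be a complete sequence and fix $i$. By the recursive definition and \cref{thm:MendozaTreff}, $(M_i,M_{i-1})$ is a $\tau$-exceptional pair in $\mathcal{J}\coloneqq J(M_{i-2},\dots,M_1)$, and $(M_t,\dots,M_{i+1})$ is a complete $\tau$-exceptional sequence in $J^{\mathcal{J}}(M_i,M_{i-1})$. The key input from \cite{BHM2024} is that the left mutation $(\widehat{M}_{i-1},\widehat{M}_i)$ is again a $\tau$-exceptional pair in $\mathcal{J}$ with the same perpendicular category:
\[ J^{\mathcal{J}}(\widehat{M}_{i-1},\widehat{M}_i)=J^{\mathcal{J}}(M_i,M_{i-1}). \]
To see this I would use the characterisation of the mutated pair through the torsion class $\Trm(J(f^{-1}(B)\oplus C))$ and the bijection $\Erm$. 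In both cases the modules are chosen among Ext-projectives of the same wide interval, together with \cite[Thm. 1.4]{BuanMarsh2021w} for iterated $J$. Granting this equality, the recursive definition shows that $\varphi_i(\mathcal{M})$ is a $\tau$-exceptional sequence. It has length $t$, so it is complete. The argument for right mutation is identical.

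The third step is invertibility. Left and right mutation of pairs are mutually inverse by \cite[Thm. 4.7]{BHM2024}. Since $\varphi_i$ and $\varphi_i^{-1}$ alter only positions $i,i-1$ and leave $\mathcal{J}$ unchanged, they are inverse on complete sequences.

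I expect the main obstacle to be the preservation of the perpendicular category in the left irregular case. There the pair $(f_Y(X),Y)$ is defined via split and nonsplit Ext-projectives, and it is exactly there that $\tau$-tilting finiteness is needed to guarantee that $X$ and $Y$ exist and are indecomposable. In the paper I would not reprove this and would cite \cite{BHM2024} for it.
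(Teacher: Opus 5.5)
Your proposal is correct and takes the same route as the paper. The paper gives no independent proof: it cites \cite[Thm. 0.1, Cor. 0.4]{BHM2024}, having noted just before that $\A_t$ is $\tau$-tilting finite by \cite[Thm. 1.2]{IyamaZhang2020}. Your additional steps correctly unpack how that corollary is obtained in \cite{BHM2024}: $\tau$-tilting finiteness of the $\tau$-perpendicular subcategories, preservation of $J$ under pair mutation, and pairwise inverseness via \cite[Thm. 4.7]{BHM2024}.
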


By \cref{thm:excepistauexcep}, every complete exceptional sequence is also a complete $\tau$-exceptional sequence. For this reason, it will always be made explicit whether $\psi$-mutation (of exceptional sequences) or $\varphi$-mutation (of $\tau$-exceptional sequences) is considered.

Notice that the only projective exceptional $\A_t$-module is $P(1)$. Thus, the following lemma shows that most exceptional pairs which arise in a complete exceptional sequence and which can be left $\psi$-mutated are left irregular $\tau$-exceptional pairs. Nonetheless, in this case left $\varphi$-mutation coincides with left $\psi$-mutation. 

\begin{lemma}\label{lem:leftregmut}
    Let $\Ecal= (E_t, \dots, E_1)$ be a complete exceptional sequence in $\mods \A_t$ which can be left $\psi$-mutated at position $2$. Assume that $(E_2, E_1)$ is a left regular $\tau$-exceptional pair. Then $E_1$ is projective in $\mods \A_t$ and $\varphi_{2}(\Ecal) = \psi_{2}(\Ecal)$.
\end{lemma}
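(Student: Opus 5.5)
The plan has two parts. First I show that left regularity together with left $\psi$-mutability forces $E_1$ to be projective. Then I compute the $\varphi$-mutation in the projective case and compare it with \cref{lem:excepmutdesc}.

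\textbf{Step 1: $E_1$ is projective.} By \cref{thm:excepistauexcep}, write $\Ecal=\Phi(T)$ for a basic tilting module $T=T_t\oplus\dots\oplus T_1$. Then $E_1=T_1$ and $E_2=T_2/T_1$. By the bijection of \cref{rmk:notationjustification} together with \cref{lem:torfdesc}, this gives $f_{E_1}^{-1}(E_2)=T_2$. Suppose for a contradiction that $E_1$ is not projective. Left regularity then says $E_1\notin \Prm({}^\perp\tau T_2)$, and I aim to contradict this.

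Left $\psi$-mutability means the nonzero map $E_2\to E_1$ is injective. As in the proof of \cref{lem:completeexcepprop}(4), this gives explicit shapes:
\begin{itemize}
\item $E_1\subseteq P(1)$ is uniserial and thin with socle $S(t)$;
\item $E_2$ is uniserial and thin with top $S(j)$ for some $j\ge 2$, and has projective resolution $0\to P(j-1)\to P(j)\to E_2\to 0$.
\end{itemize}
I would then determine $T_2$ as the extension of $E_2$ by $T_1$. Its Ext-projectives in ${}^\perp\tau T_2$ form the Bongartz completion of the partial tilting module $T_2$, computed by the universal extension of $\A_t$ by $T_2$.

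The goal is to check that $T_1$ is a summand of this Bongartz completion. Since $T_1\oplus T_2$ is $\tau$-rigid, $T_1$ already lies in ${}^\perp\tau T_2$. So it suffices to show $T_1$ is Ext-projective there, namely $\Ext^1(T_1,X)=0$ for all $X\in {}^\perp\tau T_2$. I expect this to be the main obstacle, since a general completion of $T_2$ need not be the Bongartz one. I would first try to exploit that $T_1\subseteq T_2$ via the approximation $h_1$ of \cref{lem:tiltapprox}, and that $T_1$ is a submodule of $P(1)$. If this abstract route stalls, I would fall back on an explicit dimension-vector comparison of the uniserial modules involved.

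\textbf{Step 2: $E_1=P(1)$ and $\widehat{B}=E_2$.} Once $E_1$ is projective, it equals $P(1)$, the only projective exceptional module. Since $C=E_1$ is projective, the regular definition gives $\widehat{B}=E_2$. On the $\psi$ side, $\psi_2(\Ecal)$ has second-to-last term $E_2$, as in \cref{def:completeexcepmut}. So it remains to prove
\[\lfloor \Erm_{E_2}(P(1)[1])\rfloor \simeq \Lcal_{E_2}E_1.\]
Here $\Erm_{E_2}(P(1)[1])=\Prm_{\ssplit}\bigl(J(E_2)\cap {}^\perp J(E_2, P(1))\bigr)$, and the subcategory $J(E_2)\cap{}^\perp J(E_2,P(1))$ is a rank-one wide subcategory of $J(E_2)$. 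Its unique split projective is therefore characterised as the indecomposable module $N$ in $J(E_2)$ that is left Hom-orthogonal to $J(E_2,P(1))$ and relatively projective.

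\textbf{Step 3: the comparison.} By \cref{lem:excepmutdesc}, $L\coloneqq\Lcal_{E_2}E_1$ is given by the extension $0\to \coker f\to L\to E_2\to 0$, where $f\colon E_2\hookrightarrow E_1=P(1)$. I would verify the following properties of $L$:
\begin{enumerate}
\item $L\in J(E_2)$, using $\Hom(E_2,L)=0$ (since $(L,E_2)$ is exceptional) and $\Hom(L,\tau E_2)=0$, the latter via $\projdim E_2\le1$ and Auslander--Reiten duality;
\item $\Hom(L,Z)=0$ for every $Z\in J(E_2,P(1))$, i.e.\ every $Z\in J(E_2)$ with $\Hom(P(1),Z)=0$; I would check this by applying $\Hom(-,Z)$ to the defining sequence of $L$ and to $0\to E_2\to P(1)\to\coker f\to0$;
\item $L$ is indecomposable and exceptional, hence a brick that is projective in the rank-one category it generates.
\end{enumerate}
Together these identify $L$ with the split projective $\Erm_{E_2}(P(1)[1])$. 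Finally, since $L$ is a module rather than a shifted projective, the operation $\lfloor-\rfloor$ is harmless. This gives $\varphi_2(\Ecal)=\psi_2(\Ecal)$.
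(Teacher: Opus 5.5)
\textbf{Step 1.} You set up the right contradiction, but you leave its decisive claim open as ``the main obstacle'': that $T_1$ is Ext-projective in ${}^\perp\tau T_2$. The routes you suggest (Bongartz completion via the universal extension, dimension vectors) are detours. The missing argument is short and uses only ingredients you already recorded.
\begin{itemize}
\item For $Y\in{}^\perp\tau T_2$, the Auslander--Reiten formula gives $\Ext^1(T_2,Y)=0$.
\item Apply $\Hom(-,Y)$ to the sequence $0\to T_1\to T_2\to E_2\to 0$ of \cref{lem:torfdesc}. This places $\Ext^1(T_1,Y)$ between $\Ext^1(T_2,Y)=0$ and $\Ext^2(E_2,Y)$.
\item $\Ext^2(E_2,Y)=0$ because $\projdim E_2\le 1$ by \cref{lem:completeexcepprop}(4). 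This is where left $\psi$-mutability enters.
\end{itemize}
Since $T_1\in{}^\perp\tau T_2$ already, this gives $T_1\in\Prm({}^\perp\tau T_2)$. Left regularity then forces $E_1$ to be projective.

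\textbf{Steps 2--3.} As written, these do not establish $\lfloor\Erm_{E_2}(P(1)[1])\rfloor\simeq\Lcal_{E_2}E_1$.
\begin{itemize}
\item $J(E_2)\cap{}^\perp J(E_2,P(1))$ is a left Hom-perpendicular category, hence a torsion class in $J(E_2)$. It is not a rank-one wide subcategory.
\item Your item (3) is vacuous: every exceptional $L$ is projective in the category it generates, which is just $\add L$.
\end{itemize}
So (1)--(3) give no relative projectivity and do not single out the split projective. What is needed is that $L$ is Ext-projective in $J(E_2)$ itself. This can be checked. For $Z\in J(E_2)$, $\projdim E_2\le 1$ gives $\Ext^1(E_2,Z)\cong D\Hom(Z,\tau E_2)=0$. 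Also $\Ext^1(\coker f,Z)$ is a quotient of $\Hom(E_2,Z)=0$, via $0\to E_2\to P(1)\to\coker f\to 0$. Hence $\Ext^1(L,Z)=0$. Combined with your (2), an indecomposable projective of $J(E_2)$ that is Hom-orthogonal to $J(E_2,P(1))$ is then forced to be the required object.

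The paper, however, never computes $\widehat{C}$. Once $E_1$ is projective:
\begin{itemize}
\item $\varphi_2(\Ecal)$ is a complete $\tau$-exceptional sequence by \cref{thm:BHMmut};
\item $\psi_2(\Ecal)$ is one by \cref{lem:excepmutdesc} and \cref{thm:excepistauexcep};
\item the two agree in every position except possibly position $2$, so they coincide by the uniqueness theorem \cite[Thm.~2.1(a)]{BuanMarsh2023}.
\end{itemize}
This uniqueness argument is the idea your proposal is missing, and it makes the explicit $\Erm$-computation unnecessary.
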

\begin{proof}
    Since $(E_2, E_1)$ is left regular, \cref{defn:leftrightregular} states that either $E_1$ is projective in $\mods \A_t$ or $E_1 \not \in \Prm({}^\perp \tau (f_{E_1}^{-1}(E_2)))$. It is now shown that $E_1$ is projective in ${}^\perp \tau (f_{E_1}^{-1}(E_2))$.

    Write $X= f_{E_1}^{-1}(E_2)$ and let $Y \in {}^\perp \tau X$, so that $\Hom(Y, \tau X)=0$. This implies that $\Ext^1(X,Y) = 0$ by \cite[Cor. IV.4.7]{ARS1995}. By \cref{thm:excepistauexcep}, the exceptional sequence $\Ecal$ comes from a tilting module $T$ such that $T_2 \oplus T_1 = X \oplus E_1$. Thus, \cref{lem:torfdesc} implies that $E_1$ is a submodule of $X$ and yields a short exact sequence 
    \[ 0 \to E_1 \to X \to E_2 \to 0. \]
    Applying $\Hom(-,Y)$ to the above yields an exact sequence
    \[ 0 = \Ext^1(X,Y) \to \Ext^1(E_1,Y) \to \Ext^2(E_2, Y),\]
    where $\Ext^2(E_2, Y) = 0$ because $\projdim E_2 \leq 1$ by \cref{lem:completeexcepprop}(4) and the assumption that $(E_2, E_1)$ can be left $\psi$-mutated. It follows that $\Ext^1(E_1, Y) = 0$. Thus $E_1$ is projective in ${}^\perp \tau (f_{E_1}^{-1}(E_2))$. Consequently $(E_2, E_1)$ is a left regular $\tau$-exceptional pair if and only if $E_1$ is projective in $\mods \A_t$.

    Since $E_1$ is projective in $\mods \A_t$, the formulas for left mutations give
    \[ \psi_{2}(\Ecal) = (E_t, \dots, E_3, ?, E_2) \quad \text{and} \quad \varphi_{2}(\Ecal) = (E_t, \dots, E_3, !,E_2).\]

    By \cref{lem:excepmutdesc}, the sequence $\psi_{2}(\Ecal)$ is a complete exceptional sequence in $\mods \A_t$ and hence also a complete $\tau$-exceptional sequence by \cref{thm:excepistauexcep}. Similarly, $\varphi_{2}(\Ecal)$ is a complete $\tau$-exceptional sequence by \cref{thm:BHMmut}. However, two complete $\tau$-exceptional sequences which differ in at most term are equal by \cite[Thm. 2.1(a)]{BuanMarsh2023}, see also \cite[Thm. 8]{HansonThomas2024}. This concludes the proof.
\end{proof}

The following result similarly describes right regular $\varphi$-mutation. 

\begin{lemma}\label{lem:rightirrcounter}
    Let $\Ecal= (E_t, \dots, E_1)$ be complete exceptional sequence which can be right $\psi$-mutated at position 2. Assume that $(E_2, E_1)$ is a right regular $\tau$-exceptional pair in $\mods \A_t$. Then $f_{E_1}^{-1}(E_2)$ is projective in ${}^\perp \tau E_1$ and $\varphi_{2}^{-1}(\Ecal) = \psi_{2}^{-1}(\Ecal)$.
\end{lemma}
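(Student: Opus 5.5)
Let $X \coloneqq f_{E_1}^{-1}(E_2)$. By \cref{thm:excepistauexcep}, $\Ecal=\Phi(T)$ for a basic tilting module $T$ with quasi-hereditary decomposition $T_t\oplus\dots\oplus T_1$. Then $X=T_2$, $E_1=T_1$, and \cref{lem:torfdesc} gives a short exact sequence
\[ 0 \to E_1 \xrightarrow{h_1} X \to E_2 \to 0. \]
The plan mirrors \cref{lem:leftregmut}: first show that, under right $\psi$-mutability, the second alternative of right regularity cannot occur, so that right regularity forces $X\in\Prm({}^\perp\tau E_1)$. Then conclude by the uniqueness of complete $\tau$-exceptional sequences differing in one term.

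\textbf{Step 1: ruling out $E_1\notin\Gen X$.} Right $\psi$-mutability means the nonzero map $g: E_2\to E_1$ is surjective. Compose with $X\twoheadrightarrow E_2$ to get a surjection $X\twoheadrightarrow E_1$. Hence $E_1\in\Gen X$, so the second condition in the definition of right regularity fails. Therefore right regularity means exactly that $X\in\Prm({}^\perp\tau E_1)$. This gives the first assertion: $X=f_{E_1}^{-1}(E_2)$ is projective in ${}^\perp\tau E_1$. Note that $X\in{}^\perp\tau E_1$ already holds, since $X\oplus E_1$ is $\tau$-rigid.

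\textbf{Step 2: locating the pair $\varphi^{-1}(E_2,E_1)$.} With right regularity in hand, I split according to the definition of $(\overline{C},\overline{B})$.
\begin{itemize}
\item If $E_2$ is projective in $J(E_1)$, then \cref{lem:rightmuthelp} gives $\varphi^{-1}(E_2,E_1)=(E_1,?)$.
\item Otherwise $\overline{B}=X$ and $\overline{C}=\Erm_X(E_1)$. Here I would show $\Erm_X(E_1)=E_1$, or at least that the first entry of the pair is $E_1$.
\end{itemize}
On the $\psi$ side, the right mutation $\psi_2^{-1}(\Ecal)=(E_t,\dots,E_3,E_1,\Rcal_{E_1}E_2)$ also has $E_1$ in position $2$. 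It is a complete exceptional sequence in $\mods\A_t$ by \cite[Lem. 4.7, Cor. 4.8]{HillePloog2019}, hence $\tau$-exceptional by \cref{thm:excepistauexcep}. Also $\varphi_2^{-1}(\Ecal)$ is complete $\tau$-exceptional by \cref{thm:BHMmut}.

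So once both sequences share every term except possibly the last, \cite[Thm. 2.1(a)]{BuanMarsh2023} forces them to be equal. Note that this equality argument only needs the agreement of positions $t,\dots,2$, and never the value of $\overline{B}$.

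\textbf{Main obstacle.} The delicate step is showing $\overline{C}=E_1$ in the non-projective case, that is, $\Erm_X(E_1)=E_1$. Here $E_1\in\Gen X$, so $\Erm_X(E_1)$ may involve a shifted term. I would first try to show that this case does not occur: that $X$ projective in ${}^\perp\tau E_1$ together with the surjection $X\to E_1$ forces $E_2$ to be projective in $J(E_1)$.

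If that fails, I would compute $\Erm_X(E_1)$ via $\Prm_{\ssplit}(J(X)\cap{}^\perp J(X\oplus E_1))$. I would use $J(X\oplus E_1)=J(T_2\oplus T_1)$ together with the exceptional-sequence vanishing $\Hom(E_j,E_1)=0$ for $j\ge 2$, in the style of the proof of \cref{lem:rightmuthelp}. An endomorphism argument on the exceptional module $E_1$, as used there, should identify the result with $E_1$.
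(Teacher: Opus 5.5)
Your Step 1 and your closing argument are exactly what the paper does. The closing argument is that both sequences are complete $\tau$-exceptional and agree in positions $t,\dots,2$, so \cite[Thm. 2.1(a)]{BuanMarsh2023} forces equality.

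The gap is in Step 2. You never show that the branch ``$E_2$ not projective in $J(E_1)$'' is empty; you only say you would try. The whole argument hinges on this step. Your fallback plan also cannot succeed. In that branch $\overline{C}=\Erm_X(E_1)$ is an object of $J(X)\sqcup J(X)[1]$ by \cite[Thm. 2.16]{BHM2024}. But your own Step 1 gives a surjection $X\twoheadrightarrow E_1$, so $\Hom(X,E_1)\neq 0$ and $E_1\notin J(X)$. Hence $\Erm_X(E_1)\neq E_1$ however it is computed. No endomorphism argument can repair this; the branch has to be ruled out.

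The missing implication is the one the paper takes from \cite[Lem. 2.17]{BHM2024}: if $X=f_{E_1}^{-1}(E_2)$ is Ext-projective in ${}^\perp\tau E_1$, then $E_2$ is projective in $J(E_1)$. You can also check this directly from your short exact sequence $0\to E_1\to X\to E_2\to 0$. Take $Y\in J(E_1)$ and consider the exact sequence $\Hom(E_1,Y)\to\Ext^1(E_2,Y)\to\Ext^1(X,Y)$. The left term vanishes because $Y\in E_1^\perp$. The right term vanishes because $Y\in{}^\perp\tau E_1$ and $X$ is Ext-projective there. Since $J(E_1)$ is extension-closed and contains $E_2$, $\Ext^1$ computed in $J(E_1)$ agrees with $\Ext^1$ in $\mods\A_t$, so $E_2$ is projective in $J(E_1)$.

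So under your hypotheses the first case of the definition of $\overline{C}$ always applies. Then \cref{lem:rightmuthelp} gives $\varphi^{-1}(E_2,E_1)=(E_1,?)$, and the rest of your argument goes through exactly as in the paper.
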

\begin{proof}
    Since $(E_2, E_1)$ is right regular, \cref{defn:leftrightregular} states that either $E_1 \not \in \Gen(f_{E_1}^{-1}(E_2))$ holds or $f_{E_1}^{-1}(E_2) \in \Prm({}^\perp \tau E_1)$ holds. Clearly $E_2 \in \Gen (f_{E_1}^{-1}(E_2))$ holds. By assumption $(E_2, E_1)$ can be right $\psi$-mutated, so there is a surjective morphism $E_2 \to E_1$, whence $E_1 \in \Gen E_2$ is satisfied. In conclusion, $E_1 \in \Gen (f_{E_1}^{-1}(E_2))$, proving the first part of the statement.

    The fact that $f_{E_1}^{-1}(E_2)$ is projective in ${}^\perp \tau E_1$ implies that $E_2$ is projective in $J(E_1)$, see \cite[Lem. 2.17]{BHM2024}. In this case it follows from \cref{lem:rightmuthelp} that
    \[ \psi_{2}^{-1}(\Ecal) = (E_t, \dots, E_3, E_1, ?) \quad \text{and} \quad \varphi_{2}^{-1}(\Ecal) = (E_t, \dots, E_3,E_1, !).\]

    By \cite[Lem. 4.7, Cor. 4.8]{HillePloog2019}, the sequence $\psi_{2}^{-1}(\Ecal)$ is a complete exceptional sequence and hence also a complete $\tau$-exceptional sequence by \cref{thm:excepistauexcep}. Similarly, $\varphi_{2}^{-1}(\Ecal)$ is a complete $\tau$-exceptional sequence by \cref{thm:BHMmut}. However, two complete $\tau$-exceptional sequences which differ in at most term are equal by \cite[Thm. 2.1(a)]{BuanMarsh2023}, see also \cite[Thm. 8]{HansonThomas2024}. This concludes the proof.
\end{proof}

Knowing that $\varphi$-mutation coincides with $\psi$-mutation whenever the right-most exceptional pair is a regular $\tau$-exceptional pair, it remains to consider the irregular case. The following proposition collects the necessary results to show a similar result for the irregular pairs. 

\begin{proposition}\label{prop:leftirregularprops}
    Let $\Ecal= (E_t, \dots, E_1)$ be complete exceptional sequence in $\mods \A_t$ which can be left $\psi$-mutated at position $2$. Recall the notation $\psi(E_2,E_1) = (\Lcal_{E_2} E_1, E_2)$. Let $T = T_t \oplus \dots \oplus T_1$ be the TF-ordered tilting $\A_t$-module in bijection with $\Ecal$ via \cref{thm:excepistauexcep}. The following hold:
    \begin{enumerate}
        \item $T_2 \oplus E_1$ and $T_2 \oplus E_2$ are partial tilting modules;
        \item  There is an isomorphism $f_{E_2}^{-1}(\Lcal_{E_2} E_1) \simeq T_2$;
        \item $(\Lcal_{E_2} E_1, E_2)$ is an exceptional pair in $\mods \A_t$ which can be right $\psi$-mutated;
        \item $J(T_2 \oplus E_1) = J(T_2 \oplus E_2)$.
    \end{enumerate}
    Moreover, assume that $(E_2, E_1)$ is a left irregular $\tau$-exceptional pair in $\mods \A_t$, then 
    \begin{enumerate}
    	\item[(5)] $T_2$ is not projective;
        \item[(6)] $(\Lcal_{E_2} E_1, E_2)$ is a right irregular $\tau$-exceptional pair.
    \end{enumerate}
\end{proposition}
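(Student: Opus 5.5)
The plan is to work throughout with the short exact sequence $0 \to T_1 \xrightarrow{h_1} T_2 \to E_2 \to 0$ of \cref{lem:torfdesc}. Here $E_1 = T_1$ and $E_2 = T_2/T_1$ by the proof of \cref{thm:excepistauexcep}. Fix a nonzero, hence injective, $f: E_2 \to E_1$, and use the sequence $0 \to \coker f \to \Lcal_{E_2}E_1 \to E_2 \to 0$ of \cref{lem:excepmutdesc}.

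\textbf{(1).} The module $T_2 \oplus E_1 = T_2 \oplus T_1$ is a summand of a tilting module, so it is partial tilting. For $T_2 \oplus E_2$, \cref{lem:completeexcepprop}(4) gives $\projdim E_2 \leq 1$, and $\Ext^1(E_2,E_2)=0$ since $E_2$ is exceptional. Applying $\Hom(T_2,-)$ to the sequence gives $\Ext^1(T_2,T_2) \to \Ext^1(T_2,E_2) \to \Ext^2(T_2,T_1)=0$, so $\Ext^1(T_2,E_2)=0$. Applying $\Hom(E_2,-)$ gives $\Hom(E_2,E_2) \xrightarrow{\partial} \Ext^1(E_2,T_1) \to \Ext^1(E_2,T_2) \to \Ext^1(E_2,E_2)=0$. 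The map $\partial$ sends $\mathrm{id}$ to the class of the sequence, which is nonsplit because $T_2$ is indecomposable. Since $\dim \Ext^1(E_2,E_1)=1$ by \cref{lem:completeexcepprop}(1), $\partial$ is surjective, and therefore $\Ext^1(E_2,T_2)=0$.

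\textbf{(2).} The same long exact sequence, with $\partial$ an isomorphism, shows $\Hom(E_2,T_2) \simeq \Hom(E_2,T_1) = Kf$. So $g = h_1 f: E_2 \hookrightarrow T_2$ spans $\Hom(E_2,T_2)$, and its image is the trace of $E_2$ in $T_2$. Pushing out the sequence $0\to T_1 \to T_2 \to E_2 \to 0$ along $T_1 \to \coker f$ gives $0 \to \coker f \to T_2/g(E_2) \to E_2 \to 0$. Its class is the image of the generator of $\Ext^1(E_2,E_1)$ in $\Ext^1(E_2,\coker f)$. I expect $\Lcal_{E_2}E_1$ to be the same pushout: unwinding the triangle \cref{eq:leftmutexcep}, its $H^0$ is exactly this extension. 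This identification is the step I expect to need the most care. It yields $T_2/g(E_2) \simeq \Lcal_{E_2}E_1$. Since $(\Lcal_{E_2}E_1, E_2)$ is an exceptional pair, $\Hom(E_2, \Lcal_{E_2}E_1)=0$, so $0 \to g(E_2) \to T_2 \to \Lcal_{E_2}E_1 \to 0$ is the canonical sequence of the torsion pair $(\Gen E_2, E_2^\perp)$. In particular $T_2 \notin \Gen E_2$. As $T_2 \oplus E_2$ is $\tau$-rigid by (1) and \cref{lem:tiltistau}, \cref{rmk:notationjustification} gives $f_{E_2}^{-1}(\Lcal_{E_2}E_1) \simeq T_2$.

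\textbf{(3)} and \textbf{(4).} By \cref{lem:excepmutdesc}, the pair $(\Lcal_{E_2}E_1,E_2)$ is exceptional. Its nonzero map $\Lcal_{E_2}E_1 \to E_2$ is the surjection in \cref{eq:leftmutseq}, which is unique up to scalar by \cref{lem:completeexcepprop}(1), so the pair can be right $\psi$-mutated. For (4), note that for a module $U$ with $\projdim U \le 1$, ${}^\perp \tau U = \{Y : \Ext^1(U,Y)=0\}$ by \cite[Cor. IV.4.7]{ARS1995}. Hence $J(U) = \{Y : \Hom(U,Y)=0=\Ext^{1}(U,Y)\}$, and $\Ext^2(U,-)=0$ as well. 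By (1), this applies to both $U = T_2\oplus T_1$ and $U = T_2 \oplus E_2$. Both therefore equal the right $\RHom$-orthogonal of a thick subcategory of $\Dcal^b(\mods \A_t)$, intersected with $\mods \A_t$. The sequence $0\to T_1\to T_2\to E_2\to 0$ shows that $\operatorname{thick}(T_2,T_1) = \operatorname{thick}(T_2,E_2)$, which gives (4).

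\textbf{(5)} and \textbf{(6).} For (5), suppose $T_2 = f_{E_1}^{-1}(E_2)$ were projective. Then ${}^\perp\tau T_2 = \mods \A_t$, and left irregularity would force $E_1$ to be Ext-projective in $\mods\A_t$, i.e. projective, contradicting irregularity. For (6), the sequence of (3) is complete and right $\psi$-mutable at position $2$. Suppose, for a contradiction, that $(\Lcal_{E_2}E_1,E_2)$ were right regular. Then \cref{lem:rightirrcounter} gives $\varphi_2^{-1}(\psi_2(\Ecal)) = \psi_2^{-1}(\psi_2(\Ecal)) = \Ecal$. By \cite{BHM2024}, right mutation of a right regular pair produces a left regular pair. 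So $(E_2,E_1)$ would be left regular, a contradiction. If that correspondence is unavailable, I would instead argue directly. Right irregularity means $E_2 \in \Gen T_2$, which is clear, together with $T_2 \notin \Prm({}^\perp\tau E_2)$. For the latter, I would produce $Y$ with $\Ext^1(E_2,Y)=0\neq \Ext^1(T_2,Y)$ from the failure of $E_1$ to be projective, using the sequence $0\to E_1 \to T_2 \to E_2\to 0$.
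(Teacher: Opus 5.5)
Your proposal is correct. For (3), (4) and (6) it follows the paper. In (4), your thick-subcategory formulation repackages the paper's long exact sequence for $\Hom(-,X)$. In (6), you use the same contradiction via \cref{lem:rightirrcounter} and the fact from \cite{BHM2024} that right mutation of a right regular pair is left regular, so your fallback is not needed.

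In (1) and (5) you replace citations with direct arguments. The paper identifies $T_2$ as the universal coextension of $E_2$ by $E_1$ and quotes \cite[Lem. 3.2(a)]{HillePerling2014}. You instead get $\Ext^1(T_2,E_2)=0=\Ext^1(E_2,T_2)$ from $0\to T_1\to T_2\to E_2\to 0$: the connecting map $\partial$ sends $\mathrm{id}_{E_2}$ to the nonzero class of this nonsplit sequence in the one-dimensional $\Ext^1(E_2,E_1)$. In (5) you unpack left irregularity instead of citing \cite[Prop. 3.13(a)]{BHM2024}.

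The genuinely different step is (2). The paper starts from $\Lcal_{E_2}E_1$. It pulls back a nonsplit extension in $\Ext^1(\Lcal_{E_2}E_1,E_2)$ along $\coker f\hookrightarrow\Lcal_{E_2}E_1$. It then recognises the middle terms as $E_1$ and $T_2$ by one-dimensionality of $\Ext^1$-groups, which tacitly needs the pulled-back extensions to be nonsplit.

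You start from $T_2$ instead. The same $\partial$ shows $\Hom(E_2,T_2)=K\,h_1f$, so $t_{E_2}(T_2)=g(E_2)$. Then $f_{E_2}(T_2)=T_2/g(E_2)$ is the pushout of the defining extension of $T_2$ along $p\colon E_1\to\coker f$. This makes the torsion sequence and $T_2\notin\Gen E_2$ explicit, but leaves you to identify $\Lcal_{E_2}E_1$ with that pushout.

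The step you left as an expectation does hold. Let $\eta$ be the class of $0\to T_1\to T_2\to E_2\to 0$, a basis of $\Ext^1(E_2,E_1)$. Apply the octahedral axiom to $E_2\xrightarrow{\iota}E_2\oplus E_2[-1]\xrightarrow{(f,\eta)}E_1$. Since the cone of $f$ is $\coker f$, this gives a triangle $E_2[-1]\xrightarrow{p\circ\eta}\coker f\to\Lcal_{E_2}E_1\to E_2$. So $\Lcal_{E_2}E_1$ is the middle term of the extension with class $p_*\eta$, which is $T_2/g(E_2)$.

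Alternatively, once (4) is known you can avoid the triangle. Since $J(T_2\oplus E_2)=J(T_2\oplus T_1)$, the sequence $(E_t,\dots,E_3,f_{E_2}(T_2),E_2)$ is a complete $\tau$-exceptional sequence. It differs from $\psi_2(\Ecal)$ in one term, so \cite[Thm. 2.1(a)]{BuanMarsh2023} gives $f_{E_2}(T_2)\simeq\Lcal_{E_2}E_1$.
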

\begin{proof}
    (1) The fact that $E_1 \oplus T_2$ is partial tilting follows immediately from the equality $E_1=T_1$ implied by \cref{thm:excepistauexcep}. By definition, the universal coextension $U$ of $E_2$ by $E_1$ is determined by the short exact sequence
    \[ 0 \to E_1 \to U \to \Ext^1(E_2, E_1) \otimes_K E_2 \to 0. \]
    Note that since $\dim \Ext^1(E_2, E_1) = 1$ by \cref{lem:completeexcepprop}(1), the module $T_2$ is isomorphic to $U$. The result \cite[Lem. 3.2(a)]{HillePerling2014} then states that $\Ext^1(E_2, T_2)= 0 = \Ext^j(T_2, E_2)$ for $j \geq 1$. 
    
    Since $E_2$ is exceptional and $T_2$ is partial tilting, it follows that also $\Ext^1(E_2, E_2) = 0 = \Ext^1(T_2, T_2) $ holds. In conclusion $\Ext^1(E_2 \oplus T_2, E_2 \oplus E_2) = 0$. By \cref{lem:completeexcepprop}(4), the inequality $\projdim(E_2) \leq 1$ holds, so that $E_2 \oplus T_2$ is partial tilting as desired.

    (2) By assumption  $\psi_{2}(\Ecal) = (E_t, \dots, E_3, \Lcal_{E_2} E_1, E_2)$ is a complete exceptional sequence in $\mods \A_t$. By \cref{thm:excepistauexcep}, it is also a complete $\tau$-exceptional sequence so that the expression $f_{E_2}^{-1}(\Lcal_{E_2} E_1)$ is well-defined, see \cref{rmk:notationjustification}. Let $f: E_2 \to E_1$ be a nonzero injective morphism, which exists by assumption, see \cref{def:psimut}. Consider the short exact sequence of \cref{eq:leftmutseq} and a nonsplit short exact sequence in $\Ext^1(\Lcal_{E_2} E_1, E_1)$, which exists and is unique up to isomorphism by \cref{lem:completeexcepprop}(1).  These short exact sequences are highlighted in {\color{purple}\textbf{purple}} in the following commutative diagram.
    \[
    \begin{tikzcd}
        &&0 \arrow[d] & {\color{purple}\boldsymbol{0}} \arrow[d, purple] \\
        0 \arrow[r] & E_2 \arrow[d, equal] \arrow[r] & Y \arrow[r, twoheadrightarrow] \arrow[d, hookrightarrow] \arrow[rd, phantom, "\ulcorner", very near end] & {\color{purple}\boldsymbol{\coker f}} \arrow[d, hookrightarrow, purple] \arrow[r] & 0\\
        {\color{purple}\boldsymbol{0}} \arrow[r, purple] & {\color{purple}\boldsymbol{E_2}} \arrow[r, hookrightarrow, purple] & {\color{purple}\boldsymbol{Z}} \arrow[r, twoheadrightarrow, purple] \arrow[d, twoheadrightarrow] & {\color{purple}\boldsymbol{\Lcal_{E_2} E_1}} \arrow[r, purple] \arrow[d, twoheadrightarrow, purple] & {\color{purple}\boldsymbol{0}} \\
        &&E_2 \arrow[r, equal] \arrow[d] &{\color{purple}\boldsymbol{E_2}} \arrow[d, purple] \\
        &&0 &{\color{purple}\boldsymbol{0}}
    \end{tikzcd}
    \]
    This diagram is constructed via a pullback in the {\color{purple}\textbf{purple}} subdiagram as indicated. The pullback yields the exactness and commutativity of the rows in the diagram \cite[Prop. I.13.1]{Mitchell}. Since $\coker f \hookrightarrow \Lcal_{E_2} E_1$ is a monomorphism, so is  $Y \to Z$ by \cite[Prop. I.7.1]{Mitchell}. Since $\mods \A_t$ is abelian, the morphism $Z \twoheadrightarrow \Lcal_{E_2} E_1$ being an epimorphism implies that $Y \to \coker f$ is an epimorphism \cite[Prop. I.20.2]{Mitchell}. Moreover, the pullback square is also a pushout square by \cite[p. 39, Ex. I.17]{Mitchell}. The exactness and commutativity of the columns therefore hold by the dual of \cite[Prop. I.7.1]{Mitchell}.
    
    By \cref{lem:completeexcepprop}(5), $\dim \Ext^1(\coker f, E_2) = 1$, from which $Y \simeq E_1$ follows. By \cref{lem:completeexcepprop}(1), $\dim \Ext^1(E_2, E_1)=1$, from which $Z \simeq T_2$ follows. The module $T_2 \oplus E_2$ is $\tau$-rigid by (1) and \cref{lem:tiltistau}, which clearly makes it a TF-ordered $\tau$-rigid module in $\mods \A_t$. Since it maps to $(\Lcal_{E_2} E_1, E_2)$ under the bijection $\Phi$, the desired isomorphism $f_{E_2}^{-1}(\Lcal_{E_2} E_1) \simeq T_2$ follows from \cref{thm:MendozaTreff}.

    (3) By \cref{lem:excepmutdesc}, there exists a nonzero surjection $\Lcal_{E_2} E_1 \to E_2$. Hence all nonzero morphisms from $\Lcal_{E_2} E_1$ to $E_2$ are surjective by \cref{lem:completeexcepprop}(3).

    (4) By (1) the modules $T_2 \oplus E_1$ and $T_2 \oplus E_2$ are both partial tilting, in particular of projective dimension at most 1. Then \cite[Cor. IV.4.7]{ARS1995} implies that 
    \[ J(T_2 \oplus E_i) = \{ X \in \mods \A_t : \Hom(T_2 \oplus E_i, X) = 0 = \Ext^1(T_2 \oplus E_i,X) \} \]
    for $i=1,2$. The result follows from the long exact sequence obtained by applying $\Hom(-,X)$ to the short exact sequence
    \[ 0 \to E_1 \to T_2 \to E_2 \to 0 \] 
    induced by \cref{thm:excepistauexcep}.
    
    (5) This is immediate by \cite[Prop. 3.13(a)]{BHM2024} since $f_{E_1}^{-1}(E_2) = T_2$.

    (6) By (3) the pair $(\Lcal_{E_2} E_1, E_2)$ is an exceptional pair in $\mods \A_t$ which can be right $\psi$-mutated. The complete exceptional sequence $\psi_{2}(\Ecal)$ is also a complete $\tau$-exceptional sequence by \cref{thm:excepistauexcep}. This implies that $(\Lcal_{E_2} E_1, E_2)$ is a $\tau$-exceptional pair. 

    Assume for a contradiction that $(\Lcal_{E_2} E_1, E_2)$ is a right regular $\tau$-exceptional pair. In this case \cref{lem:rightirrcounter} gives
    \[ \varphi_2^{-1}(\psi_2(\Ecal)) = \psi_{2}^{-1}(\psi_{2}(\Ecal)) = \Ecal. \]
    
    As $(\Lcal_{E_2} E_1, E_2)$ was right regular, \cite[Def.-Prop. 4.5]{BHM2024} implies that $\varphi_{2}^{-1}(\psi_{2}(\Ecal)) = \Ecal$ is left regular at position $2$, that is, $(E_2, E_1)$ is a left regular $\tau$-exceptional pair. This is a contradiction. 
\end{proof}

It is now possible to prove the base case for the inductive proof that the mutations coincide.

\begin{proposition}\label{prop:leftphimut}
    Let $\Ecal = (E_t, \dots, E_1)$ be a complete exceptional sequence in $\mods \A_t$. If $\Ecal$ can be left $\psi$-mutated at position $2$, then $\psi_{2}(\Ecal) = \varphi_{2}(\Ecal)$.
\end{proposition}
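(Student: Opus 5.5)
We split according to whether the pair $(E_2,E_1)$ is left regular or left irregular as a $\tau$-exceptional pair. If it is left regular, \cref{lem:leftregmut} applies directly: $E_1$ is projective and $\varphi_2(\Ecal)=\psi_2(\Ecal)$. So the real work is the left irregular case.

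Assume $(E_2,E_1)$ is left irregular. We do not compute $\varphi(E_2,E_1)$ from the formula involving $X$ and $Y$ at all; instead we use that right $\varphi$-mutation is inverse to left $\varphi$-mutation (\cref{thm:BHMmut}). It suffices to show
\[ \varphi_2^{-1}(\psi_2(\Ecal)) = \Ecal, \]
since then applying $\varphi_2$ gives $\psi_2(\Ecal)=\varphi_2(\Ecal)$.

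By \cref{lem:excepmutdesc}, $\psi_2(\Ecal)=(E_t,\dots,E_3,\Lcal_{E_2}E_1,E_2)$ is a complete exceptional sequence. By \cref{thm:excepistauexcep} it is also a complete $\tau$-exceptional sequence. By \cref{prop:leftirregularprops}(6), its last pair $(\Lcal_{E_2}E_1,E_2)$ is right irregular. Rather than unwinding the irregular right mutation formula, we try to identify the right mutation through the terms we control.

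\cref{prop:leftirregularprops}(2) gives $f_{E_2}^{-1}(\Lcal_{E_2}E_1)\simeq T_2$. \cref{prop:leftirregularprops}(4) gives $J(T_2\oplus E_2)=J(T_2\oplus E_1)$. Both $\Ecal$ and $\varphi_2^{-1}(\psi_2(\Ecal))$ are complete $\tau$-exceptional sequences sharing $E_t,\dots,E_3$. Mutation of a pair preserves the perpendicular category $J$ of the pair, so the pair occupying positions $2,1$ in $\varphi_2^{-1}(\psi_2(\Ecal))$ has perpendicular category $J(T_2\oplus E_2)=J(T_2\oplus E_1)=J(E_2,E_1)$.

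It then suffices to show that this pair agrees with $(E_2,E_1)$ in one entry. After that, \cite[Thm. 2.1(a)]{BuanMarsh2023} forces equality of the whole sequence, exactly as in the proofs of \cref{lem:leftregmut} and \cref{lem:rightirrcounter}. The natural target is the last term, which we expect to be $E_1$.

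\textbf{Main obstacle.} Justifying that the last entry of $\varphi^{-1}(\Lcal_{E_2}E_1,E_2)$ is $E_1$ requires some control of right irregular mutation, and the paper has not yet stated that definition. We would try two routes, in this order.

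First, in \cite{BHM2024} irregular right mutation is described through the torsion class $\Trm(J(\cdot))$ and Ext-projectives. There the last term is the unique indecomposable $\tau$-rigid module $M$ with $M\oplus T_2$ $\tau$-rigid, $J(M\oplus T_2)=J(T_2\oplus E_2)$, and $M\neq E_2$. This is the other completion of $T_2$ inside the $\tau$-tilting interval. Since $T_2\oplus E_1$ is partial tilting with the same $J$ (\cref{prop:leftirregularprops}(1),(4)), and $E_1\neq E_2$, this would give $M=E_1$.

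Second, as a fallback we use the finiteness of $\tau$-tilting modules. The TF-ordered $\tau$-rigid modules with perpendicular category $J(T_2\oplus E_1)$ have two-term summands, and there are only the two rank-two completions $T_2\oplus E_1$ and $T_2\oplus E_2$. A counting argument then pins down the mutated pair as $\Phi(T_2\oplus E_1)=(E_2,E_1)$.
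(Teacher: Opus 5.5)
Your reduction to $\varphi_2^{-1}(\psi_2(\Ecal))=\Ecal$, the invariance of $J$ under mutation, and the final appeal to \cite[Thm. 2.1(a)]{BuanMarsh2023} are fine. The gap is the one step that carries the left irregular case: determining an entry of $\varphi^{-1}(\Lcal_{E_2}E_1,E_2)$. You never prove it.

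Route 1 does not follow from the definition in \cite{BHM2024}. It asserts, without proof, that the mutated pair is $\Phi$ of a TF-ordered module $T_2\oplus M$ with $T_2$ in front, and that such an $M$ is unique. That is essentially the conclusion itself.

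Route 2 is false. Take $t=3$ and $\Ecal=(\begin{smallmatrix}&&3\\&2&\\1&&\end{smallmatrix},\begin{smallmatrix}3\end{smallmatrix},\begin{smallmatrix}2&\\&3\end{smallmatrix})$ from \cref{fig:A3seqs}. It is left $\psi$-mutable at $2$, and $E_1$ is not projective, so the pair is left irregular by \cref{lem:leftregmut}. Here $T_2=\begin{smallmatrix}&&3\\&2&\\&&3\end{smallmatrix}$. Since $E_1$ has top $S(2)$, the reordering $E_1\oplus T_2$ is also TF-ordered, with $f_{T_2}(E_1)\simeq S(2)$. So $(\begin{smallmatrix}2\end{smallmatrix},T_2)$ is a third $\tau$-exceptional pair with the same $J$. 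Indeed, in \cref{fig:A3seqs} the pairs $(E_2,E_1)$, $(\Lcal_{E_2}E_1,E_2)=(\begin{smallmatrix}&3\\2&\end{smallmatrix},\begin{smallmatrix}3\end{smallmatrix})$ and $(\begin{smallmatrix}2\end{smallmatrix},T_2)$ form a $3$-cycle under $\varphi_2$. Thus $\varphi$ and $\varphi^{-1}$ send $(\Lcal_{E_2}E_1,E_2)$ to the two \emph{other} members of this family, and one of them has last term $T_2$ rather than a completion of $T_2$. No counting argument can decide which one is $\varphi^{-1}$; you need genuine information about irregular pairs.

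That information is \cite[Prop. 3.14(e)]{BHM2024}, which the paper applies to the right irregular pair from \cref{prop:leftirregularprops}(6).
\begin{itemize}
\item Combined with \cref{prop:leftirregularprops}(2), it gives $\tau(T_2\oplus E_2)=\tau\Prm_{\nsplit}(\Trm(J(T_2\oplus E_2)))$.
\item Since $T_2$ and $E_2$ are non-projective (\cref{prop:leftirregularprops}(5)), part (4) yields $\Prm_{\nsplit}(\Trm(J(T_2\oplus E_1)))=T_2\oplus E_2$.
\item As $E_2\in\Gen T_2$, the left irregular formula has $X=T_2$ and $Y=E_2$.
\item Hence $\varphi(E_2,E_1)=(f_{E_2}(T_2),E_2)=(\Lcal_{E_2}E_1,E_2)$ by (2).
\end{itemize}
With this ingredient you evaluate the left mutation directly, and the detour through $\varphi^{-1}$ becomes unnecessary.
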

\begin{proof}
    This is well-defined because $\Ecal$ is also a complete $\tau$-exceptional sequence by \cref{thm:excepistauexcep}. If $(E_2, E_1)$ is a left regular $\tau$-exceptional pair, the result follows from \cref{lem:leftregmut}. 

    If $(E_2, E_1)$ is a left irregular $\tau$-exceptional pair consider $\psi(E_2, E_1) = (\Lcal_{E_2} E_1, E_2)$. By \cref{prop:leftirregularprops}(6) this is a right irregular $\tau$-exceptional pair. Combining \cite[Prop. 3.14(e)]{BHM2024} with \cref{prop:leftirregularprops}(2) yields
    \[ \tau(T_2 \oplus E_2) = \tau \Prm_{\nsplit}(\Trm(J(T_2 \oplus E_2))). \]
    Observe that $T_2$ and $E_2$ are not projective in $\mods \A_t$ by \cref{prop:leftirregularprops}(5). Using \cref{prop:leftirregularprops}(4) and applying $\tau^{-1}$ yields
    \[ T_2 \oplus E_2 = \Prm_{\nsplit}(\Trm(J(T_2 \oplus E_1))). \]
    Since $E_2 \in \Gen T_2$ it is clear that $T_2 = \Prm_{\ssplit}(\Gen \Prm_{\nsplit}(\Trm(J(T_2 \oplus E_1))))$ and thus $\varphi(E_2, E_1) = (f_{E_2}(T_2), E_2)$ by the definition of left irregular $\varphi$-mutation. It follows from \cref{prop:leftirregularprops}(2) that $f_{E_2}(T_2) \simeq \Lcal_{E_2} E_2$, whence 
    \begin{equation}\label{eq:mutationexpression2} \varphi_{2}(\Ecal) = (E_t, \dots, E_3, \Lcal_{E_2} E_1, E_2) = \psi_{2}(\Ecal) \end{equation}
    holds. 
\end{proof}

This shows that $\psi$-mutation and $\varphi$-mutation at the right-most position of a complete exceptional sequence coincide. To show the same for mutation at other positions, the following reduction lemma is useful.

\begin{lemma}\label{lem:reduction}
    Let $\Ecal = (E_t, \dots, E_1)$ be a complete exceptional sequence in $\mods \A_t$. Then $J(E_i, \dots, E_1)$ is equivalent to $\mods \A_{t-i}$ for all $i \in \{1, \dots, {t-1}\}$. 
\end{lemma}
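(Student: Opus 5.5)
By \cref{thm:excepistauexcep}, $\Ecal=\Phi(T)$ for a basic tilting module $T=T_t\oplus\dots\oplus T_1$ with quasi-hereditary decomposition. By the definition of $J(E_i,\dots,E_1)$ via \cref{thm:MendozaTreff}, we then have
\[ J(E_i,\dots,E_1)=J(T_i\oplus\dots\oplus T_1). \]
So the plan is to identify $J(T_{\le i})$ for $T_{\le i}:=T_i\oplus\dots\oplus T_1$, a partial tilting module.

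First, since $\projdim T_{\le i}\le 1$, \cite[Cor. IV.4.7]{ARS1995} gives
\[ J(T_{\le i})=\{X\in\mods\A_t : \Hom(T_{\le i},X)=0=\Ext^1(T_{\le i},X)\}, \]
exactly as in \cref{prop:leftirregularprops}(4). Next, use tilting theory for $T$. By \cref{thm:tiltingendo}, $B:=\End(T)\simeq\A_t$, and the functors $\Hom(T,-)$ and $\Ext^1(T,-)$ give the Brenner--Butler equivalences between torsion pairs. An $X$ as above satisfies $\Ext^1(T_j,X)=0$ for $j\le i$. The idea is to pass to a setting where all of $\Ext^1(T,X)$ vanishes, i.e.\ $X\in\Gen T$, so that $X\simeq \Hom(T,X)\otimes_B T$ and $\Hom(T,X)$ is a $B$-module vanishing at the idempotents $e_1,\dots,e_i$ (under the identification $\Hom(T,T_j)\leftrightarrow P(j)$ from \cref{lem:tiltapprox}).

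\textbf{Main obstacle.} It is not automatic that $\Ext^1(T_j,X)=0$ for $j>i$. Two ways to get around this:
\begin{itemize}
\item Choose a different completion. Take the Bongartz-type completion of $T_{\le i}$, or use the quasi-hereditary structure: $J(T_{\le i})$ is the perpendicular category of a partial tilting module with $i$ summands. By Jasso's reduction \cite{Jasso2015}, $J(T_{\le i})\simeq \mods C$ with $C=\End_{\A_t}(U)/[T_{\le i}]$, where $U$ is the Bongartz completion.
\item Compute $C$ directly. Taking $U=T$ (which is valid as long as $T$ is the relevant completion; otherwise replace it), the ring $C=\End(T)/[\add T_{\le i}]\simeq\A_t/\langle e_1+\dots+e_i\rangle$ by \cref{thm:tiltingendo}. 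Killing vertices $1,\dots,i$ in the quiver of $\A_t$ leaves the vertices $i+1,\dots,t$ with relations $a_{i+1}b_{i+1}=b_ia_i=0$ and $a_{j+1}b_{j+1}=b_ja_j$. After reindexing $j\mapsto j-i$, this is precisely the presentation of $\A_{t-i}$.
\end{itemize}

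Concretely, the steps in order are:
\begin{enumerate}
\item Reduce $J(E_i,\dots,E_1)$ to $J(T_{\le i})$.
\item Invoke Jasso's $\tau$-tilting reduction \cite[Thm. 3.8]{Jasso2015}, which gives $J(T_{\le i})\simeq\mods \End(B_{T_{\le i}})/[T_{\le i}]$ for the Bongartz completion $B_{T_{\le i}}$.
\item Identify the Bongartz completion with $T$ itself. This follows from $\Gen T_{\le i}\subseteq\Gen T$ and ${}^\perp\tau T_{\le i}\supseteq\Gen T$, but I must check that $\Gen T={}^\perp\tau T_{\le i}$. Equivalently, each $T_j$ with $j>i$ must be Ext-projective in ${}^\perp\tau T_{\le i}$. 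This is the delicate step. I would try to deduce it from the filtration $T_1\subset\dots\subset T_t=P(t)$ and \cref{lem:tiltapprox}(2), together with the fact that $T_t$ is projective. If it fails, the quotient by $T_{\le i}$ is still an idempotent quotient of the endomorphism ring of some tilting completion, which is again $\simeq\A_t$ by \cref{thm:tiltingendo}. In that case I need only check that the killed idempotents correspond to $e_1,\dots,e_i$, using $\dim\Hom(P(t),T_j)=j$ from \cref{lem:tiltdimhom}.
\item Compute the quotient $\A_t/\langle e_1+\dots+e_i\rangle\simeq\A_{t-i}$ from the quiver with relations.
\end{enumerate}
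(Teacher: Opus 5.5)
Your plan works, but only along the fallback you give in step 3, so you should commit to that fallback. The identification you hope for, that the Bongartz completion of $T_{\le i}$ is $T$ itself, is false in general. Take $t=3$, $i=1$ and $T=P(3)\oplus\begin{smallmatrix}&&3\\&2&\\&&3\end{smallmatrix}\oplus\begin{smallmatrix}3\end{smallmatrix}$ from \cref{fig:stautiltgraph}. The tilting module $T''=P(3)\oplus\begin{smallmatrix}1&&3\\&2&\\&&3\end{smallmatrix}\oplus\begin{smallmatrix}3\end{smallmatrix}$ also contains $T_1=S(3)$. Since $\begin{smallmatrix}&&3\\&2&\\&&3\end{smallmatrix}\simeq P(3)/P(1)\in\Gen P(3)$, you get $\Gen T\subsetneq\Gen T''\subseteq{}^\perp\tau T_1$, where strictness comes from \cite[Thm. 2.7]{AIR2014}. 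Hence $\Prm({}^\perp\tau T_1)\neq T$.

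So set $T'=\Prm({}^\perp\tau T_{\le i})$ from the outset. The one point you leave unjustified is that $T'$ is tilting rather than merely $\tau$-tilting, and you need this before invoking \cref{thm:tiltingendo} and \cref{lem:tiltdimhom}. It holds because $T_{\le i}$ is partial tilting: then ${}^\perp\tau T_{\le i}=\{Y:\Ext^1(T_{\le i},Y)=0\}$ by \cite[Cor. IV.4.7]{ARS1995}, and $T'$ is the classical Bongartz completion, see \cite[Thm. A]{LiZhang2015}. After that, your dimension count finishes the identification. You have $\dim\Hom(P(t),T_j)=j$ for $T$ and $\dim\Hom(P(t),T'_k)=k$ for $T'$, which forces $T'_j=T_j$ for $j\le i$. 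So the idempotents killed are $e_1,\dots,e_i$, and your computation of $\A_t/\langle e_1+\dots+e_i\rangle\simeq\A_{t-i}$ from the quiver with relations is correct.

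With that fix, your route differs from the paper's only in organisation. The paper uses the same ingredients for $i=1$: the Bongartz completion of $T_1$, \cite[Thm. A]{LiZhang2015}, \cref{thm:tiltingendo}, \cite[Thm. 3.8]{Jasso2015}, and $\A_t/\langle e_1\rangle\simeq\A_{t-1}$. It then inducts via $J(E_{i+1},E_i,\dots,E_1)=J^{J(E_i,\dots,E_1)}(E_{i+1})$. That induction requires knowing that $(E_t,\dots,E_{i+1})$ is again a complete exceptional sequence in $J(E_i,\dots,E_1)\simeq\mods\A_{t-i}$. The paper appeals to fullness and extension-closure for this, which handles $\Hom$ and $\Ext^1$ but not directly $\Ext^2$. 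Your one-step version needs only the definitional identity $J(E_i,\dots,E_1)=J(T_{\le i})$, so it avoids that transport step entirely. Your use of \cref{lem:tiltdimhom} to locate $T_1,\dots,T_i$ among the summands of $T'$ is also more explicit than the paper's ``$e_1(T')=e_1T$ is clear''. The Brenner--Butler discussion at the start plays no role and can be dropped.
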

\begin{proof}
    It is sufficient to show this for $i=1$ because of the recursive definition of $J(E_i, \dots, E_1)$, see \cite[Thm. 1.4]{BuanMarsh2021w} and \cite[Thm. 6.4]{BuanHanson2023}, and the observation that $(E_t, \dots, E_2)$ is a complete exceptional sequence in $J(E_1)$ because the subcategory is full and extension-closed. Indeed, if $J(E_i, \dots, E_1) \simeq \mods \A_{t-i}$ and the case $i=1$ hold, then the proof follows inductively from 
    \[ J(E_{i+1}, E_i, \dots, E_1) = J^{J(E_i, \dots, E_1)}(E_{i+1}) \simeq J^{\mods \A_{t-i}}(E_{i+1}) = \mods \A_{t-i-1}\]
    as required. Let $T$ be the tilting $\A_t$-module such that $\Phi(T) = \Ecal$ via \cref{thm:excepistauexcep}, which is such that $E_1 = T_1$. Consider the $\A_t$-module $T' = \Prm(^\perp \tau T_1)$, which is tilting since $T_1$ is partial tilting by \cite[Thm. A]{LiZhang2015}. It is clear that $e_1(T') = e_1 T$, giving $(T')_1 = T_1$ in the quasi-hereditary decomposition of $T'$. 
    
    By \cref{thm:tiltingendo} the endomorphism algebra of $T'$ satisfies $\End(T') \simeq \A_t$ and by \cref{lem:tiltapprox}(1), the module $T_1$ has a monomorphism to every other direct summand of $T'$. This means that $\Hom(T',T_1)$ is the indecomposable projective $\End(T')$-module which is a submodule of every other indecomposable projective. In other words, $\Hom(T',T_1)$ is isomorphic to the projective at vertex 1 in the presentation of $\A_t$ given at the start of \cref{sec:prelim}. From this presentation it is clear that $\A_t/\langle e_1 \rangle \simeq \A_{t-1}$. It follows that
    \[ J(E_1) \simeq \mods (\End(T')/[\Hom(T', T_1)]) \simeq \mods \A_t / \langle e_1 \rangle \simeq \mods \A_{t-1}, \]
    where the first isomorphism is \cite[Thm. 3.8]{Jasso2015}. This concludes the proof.
\end{proof}

The preceding lemma enables the inductive step in the following proof.

\begin{theorem}\label{thm:mutthm}
    Let $\Ecal = (E_t, \dots, E_1)$ be an exceptional sequence in $\mods \A_t$ and let $i \in \{2, \dots, t\}$. If $\Ecal$ can be left $\psi$-mutated at position $i$, then $\psi_i(\Ecal) = \varphi_i(\Ecal)$.
\end{theorem}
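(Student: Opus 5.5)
The plan is to induct on the position $i$, reducing to the base case \cref{prop:leftphimut} via \cref{lem:reduction}. (I read the hypothesis as: $\Ecal$ is a complete exceptional sequence, as in the surrounding results.)

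\emph{Base case $i=2$.} This is exactly \cref{prop:leftphimut}, so nothing further is needed.

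\emph{Case $i\geq 3$.} Let $\Ccal \coloneqq J(E_{i-2},\dots,E_1)$. By \cref{lem:reduction} there is an exact equivalence $\Ccal\simeq \mods \A_{t-i+2}$. By \cref{def:completetauexcepmut}, the pair $(\widehat{M}_{i-1},\widehat{M}_i)$ in $\varphi_i(\Ecal)$ is the left $\varphi$-mutation of the $\tau$-exceptional pair $(E_i,E_{i-1})$ computed inside $\Ccal$. The strategy is to show three things:
\begin{enumerate}
\item $(E_t,\dots,E_{i-1})$ is a complete exceptional sequence in $\Ccal$;
\item left $\psi$-mutation of this sequence at its rightmost position, computed in $\Ccal$, agrees with the left $\psi$-mutation of $(E_i,E_{i-1})$ computed in $\mods\A_t$;
\item the hypothesis ``can be left $\psi$-mutated'' transfers to $\Ccal$.
\end{enumerate}
Once these hold, \cref{prop:leftphimut} applied to $\mods\A_{t-i+2}\simeq\Ccal$, transported back along the equivalence, gives
\[
\varphi^{\Ccal}(E_i,E_{i-1})=(\Lcal_{E_i}E_{i-1},E_i).
\]
The remaining terms of $\varphi_i(\Ecal)$ and $\psi_i(\Ecal)$ are unchanged, so the two mutations agree.

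\emph{Step (1).} Each $E_j$ with $j\geq i-1$ lies in $\Ccal$, because a $\tau$-exceptional sequence has its later terms in the iterated $J$-category. Since $\Ccal$ is full and extension-closed, $\Hom$ and $\Ext^1$ between objects of $\Ccal$ are computed the same way in $\Ccal$ as in $\mods\A_t$. Under an exact equivalence with a module category of global dimension $2$, exceptionality and the vanishing conditions of an exceptional sequence are then detected by $\Hom$ and $\Ext^1$ together with the transported $\Ext^2$. Alternatively, one can cite the observation already made in the proof of \cref{lem:reduction}: $(E_t,\dots,E_2)$ is a complete exceptional sequence in $J(E_1)$. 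Iterating that observation gives (1).

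\emph{Step (3).} Injectivity of a nonzero map $E_i\to E_{i-1}$ is a property in the ambient abelian category. Because $\Ccal$ is an exact abelian subcategory, kernels agree, so injectivity is preserved.

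\emph{Step (2).} This is the main obstacle, since $\Lcal_{E_i}E_{i-1}$ is defined via $\RHom$ in $\Dcal^b(\mods\A_t)$. I would avoid derived categories entirely and use \cref{lem:excepmutdesc}, which characterises $\Lcal_{E_i}E_{i-1}$ as the middle term of the unique nonsplit extension
\[
0\to\coker f\to \Lcal_{E_i}E_{i-1}\to E_i\to 0,
\]
with $\dim\Ext^1(E_i,\coker f)=1$, as in \cref{lem:completeexcepprop}(1),(5). The key inputs are:
\begin{itemize}
\item $\coker f\in\Ccal$, since $\Ccal$ is closed under cokernels as an exact abelian subcategory;
\item $\Ext^1$ in $\Ccal$ agrees with $\Ext^1$ in $\mods\A_t$, since $\Ccal$ is extension-closed.
\end{itemize}
Hence the extension computed in $\Ccal$ and the one computed in $\mods\A_t$ give the same object $\Lcal_{E_i}E_{i-1}$. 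Together with Steps (1) and (3), this completes the induction.
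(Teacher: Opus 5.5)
Your proposal is correct and follows the paper's proof: the base case is \cref{prop:leftphimut}, and for $i\geq 3$ you pass to $J(E_{i-2},\dots,E_1)\simeq\mods\A_{t-i+2}$ via \cref{lem:reduction} and use that this subcategory is wide, so the sequence \cref{eq:leftmutseq} lies in it; your uniqueness-of-extension argument in Step (2) is a slightly more explicit version of the paper's one-line justification. One small correction: $\dim\Ext^1(E_i,\coker f)=1$ does not come from \cref{lem:completeexcepprop}(5), which concerns $\Ext^1(\coker f,E_i)$; instead it follows from (1) by applying $\Hom(E_i,-)$ to $0\to E_i\xrightarrow{f}E_{i-1}\to\coker f\to 0$ and using $\Ext^1(E_i,E_i)=0=\Ext^2(E_i,E_i)$.
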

\begin{proof}
    This is well-defined since $\Ecal$ is also a complete $\tau$-exceptional sequence by \cref{thm:excepistauexcep}. If $i=2$, then the result is \cref{prop:leftphimut}. If $i \in \{3, \dots, t\}$, then $J(E_{i-2}, \dots, E_1) \simeq \mods \A_{t-i+2}$ by \cref{lem:reduction}. It follows that 
    \begin{align*} \varphi_i(\Ecal) &= (E_t, \dots, E_{i+1}, \varphi^{J(E_{i-2}, \dots, E_1)}(E_i, E_{i-1}), E_{i-2}, \dots, E_1) \\
    &= (E_t, \dots, E_{i+1}, \psi^{J(E_{i-2}, \dots, E_1)}(E_i, E_{i-1}), E_{i-2}, \dots, E_1) \\
    &= (E_t, \dots, E_{i+1}, \psi(E_i, E_{i-1}), E_{i-2}, \dots, E_1) \\
    & = \psi_i(\Ecal). \end{align*}
    where the first and final equalities hold by definition, the second equality is \cref{prop:leftphimut} in the subcategory $J(E_{i-2}, \dots, E_1) \simeq \mods \A_{t-i+2}$. The third equality follows from the fact that $J(E_{i-2}, \dots, E_1)$ is a wide subcategory of $\mods \A_t$ by \cite[Cor. 3.25]{BST2019}, that is, it is closed under kernels, cokernels and extensions. Therefore, the short exact sequence of \cref{eq:leftmutseq} lies in this subcategory. This concludes the proof.
\end{proof}

\subsection{Relationship with the mutation of ($\tau$-)tilting modules}

The final result of this article establishes a close relationship between the mutation of tilting $\A_t$-modules and the mutation of ($\tau$-)exceptional sequences in $\mods \A_t$. First, it is necessary to establish the following result, again setting up the inductive step of the proof. 

\begin{lemma}\label{lem:tiltreduction}
	Let $T$ be a basic tilting $\A_t$-module. Then $T_t/T_1 \oplus \dots \oplus T_2/T_1$ is a basic tilting object in $J(T_1)$. 
\end{lemma}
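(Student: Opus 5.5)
The plan is to identify the modules $T_i/T_1$ with the images $f_{T_1}(T_i)$ of the torsion-free functor. Then the statement follows from the Jasso-type reduction combined with the equivalence $J(T_1)\simeq \mods \A_{t-1}$ of \cref{lem:reduction}. In that abelian category "tilting" means partial tilting with $t-1$ summands, and there projective dimension and $\Ext^1$ can be read off from the ambient category.

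\textbf{Step 1.} First show $f_{T_1}(T_i)\simeq T_i/T_1$ for every $i\ge 2$. By \cref{lem:tiltapprox}(1) and \cref{thm:tiltingendo}, the minimal right $\add T_1$-approximation of $T_i$ is the composite of the monomorphisms $h_1,\dots,h_{i-1}$, since $\Hom(T,-)$ identifies it with the inclusion $P(1)\hookrightarrow P(i)$, which generates $\Hom(P(1),P(i))$. It is therefore injective, with image $T_1\subseteq T_i$. The argument of \cref{lem:torfdesc}, with $T_1$ in place of $\bigoplus_{j<i}T_j$ and this composite in place of $h_{i-1}$, then gives $t_{T_1}(T_i)\simeq T_1$ and $f_{T_1}(T_i)\simeq T_i/T_1$.

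\textbf{Step 2.} Next, $T_i\oplus T_1$ is $\tau$-rigid and $T_i\notin\Gen T_1$, by \cref{lem:tfordered} applied to the pair. So by \cref{rmk:notationjustification} each $T_i/T_1$ is indecomposable and $\tau$-rigid in $J(T_1)$. Moreover, by the Jasso reduction (\cite[Thm. 3.8]{Jasso2015}, in its form from \cite{BuanMarsh2018t}), $f_{T_1}$ carries $\tau$-tilting modules containing $T_1$ and lying outside $\Gen T_1$ to support $\tau$-tilting objects of $J(T_1)$. Applying this to $T=T_1\oplus(T_t\oplus\dots\oplus T_2)$ shows that $\bigoplus_{i\ge2}T_i/T_1$ is $\tau$-rigid in $J(T_1)$ with $t-1$ pairwise non-isomorphic summands. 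These summands are distinct because $f_{T_1}$ is a bijection. Under $J(T_1)\simeq\mods\A_{t-1}$ this is a $\tau$-tilting module.

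\textbf{Step 3.} It remains to upgrade "$\tau$-tilting" to "tilting", that is, to show $\projdim\le1$ inside $J(T_1)$; by \cref{lem:tiltistau} this suffices. Since $J(T_1)$ is a wide subcategory (\cite[Cor. 3.25]{BST2019}), Yoneda $\Ext^1$ computed in $J(T_1)$ agrees with $\Ext^1_{\A_t}$. I would then argue directly. Apply $\Hom(-,X)$ to $0\to T_1\to T_i\to T_i/T_1\to 0$ for $X\in J(T_1)$. Since $\projdim T_1,\projdim T_i\le1$, we have $\Ext^2(T_i,X)=0$, and hence $\Ext^2_{\A_t}(T_i/T_1,X)\cong\operatorname{coker}(\Ext^1(T_i,X)\to\Ext^1(T_1,X))$. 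But $\Ext^1(T_1,X)=0$ for $X\in J(T_1)$, by \cite[Cor. IV.4.7]{ARS1995} and $X\in{}^\perp\tau T_1$. Therefore $\Ext^2_{\A_t}(T_i/T_1,-)$ vanishes on $J(T_1)$.

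The main obstacle is to turn this vanishing into $\Ext^2_{J(T_1)}=0$, since higher Yoneda Ext in a wide subcategory need not agree with the ambient one. I would sidestep it with \cref{lem:reduction}. The argument in its proof gives $J(T_1)\simeq\mods\A_{t-1}$, realised as $\mods(\End T/[T_1])$ via Jasso's functor $\Hom(T,-)$. Under this functor $T_i/T_1$ corresponds to $\Hom(T,T_i)$ modulo the maps factoring through $T_1$, which is $P(i)/P(1)$ under the identification with $\A_t$. This should be the projective $e_i(\A_t/\langle e_1\rangle)$ of $\A_{t-1}$, and if so the images are projective. Then $\bigoplus_{i\ge 2}T_i/T_1$ corresponds to $\A_{t-1}$ itself, which is tilting, and in particular basic. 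If this identification of the image with $\A_{t-1}$ fails, the fallback is the $\Ext^2$ computation above, combined with checking that $\Ext^2$ in $\mods\A_{t-1}$ agrees with the ambient one for these objects.
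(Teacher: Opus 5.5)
Your Steps 1 and 2 are sound. Step 2 even makes explicit that the whole sum $\bigoplus_{i\geq 2}T_i/T_1$ is $\tau$-tilting in $J(T_1)$, which the paper uses only implicitly. So everything hinges on $\projdim_{J(T_1)}(T_i/T_1)\leq 1$, and here your main argument fails.

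\textbf{Why the main route fails.} Jasso's equivalence $J(T_1)\simeq\mods(\End B/[T_1])$ is given by $\Hom(B,-)$ for the Bongartz completion $B=\Prm({}^\perp\tau T_1)$. This is the module $T'$ in the proof of \cref{lem:reduction}, not the given $T$, and the equivalence uses that $B$ is Ext-projective in ${}^\perp\tau T_1\supseteq J(T_1)$. For another tilting module containing $T_1$, the functor $\Hom(T,-)$ is in general not an equivalence on $J(T_1)$. Your computation $\Hom(T,T_i/T_1)\cong e_i(\A_t/\langle e_1\rangle)$ is correct, but it therefore says nothing about projectivity in $J(T_1)$.

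In fact the conclusion is false. Take $t=3$ and $T=P(3)\oplus\begin{smallmatrix}1&&3\\&2&\\&&3\end{smallmatrix}\oplus P(1)$, the ideal $I_2$. Then:
\begin{itemize}
\item $T_1=P(1)$ and $J(T_1)=\mods\A_3/\langle e_1\rangle\simeq\mods\A_2$;
\item $T_2/T_1\simeq S(3)$, which is not projective there, since its projective cover is $P(3)/P(1)$;
\item nevertheless $\Hom(T,S(3))\cong e_2(\A_3/\langle e_1\rangle)$ is projective.
\end{itemize}
So $\bigoplus_{i\geq2}T_i/T_1$ corresponds to the tilting $\A_2$-module $P(2)\oplus S(2)$, not to $\A_2$.

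\textbf{The fallback is incomplete.} Your computation that $\Ext^2_{\A_t}(T_i/T_1,X)=0$ for all $X\in{}^\perp\tau T_1$ is correct, but you leave open exactly the step that matters. You do not need $\Ext^2$ to agree. You only need the map $\Ext^2_{\mathcal W}(A,B)\to\Ext^2_{\A_t}(A,B)$ to be injective for a wide subcategory $\mathcal W$, and this always holds:
\begin{enumerate}
\item Write a class as $\xi_1\xi_0$ with $\xi_1\colon 0\to B\to E\xrightarrow{\pi}M\to0$ in $\mathcal W$.
\item If it vanishes in $\mods\A_t$, exactness of $\Ext^1(A,E)\xrightarrow{\pi_*}\Ext^1(A,M)\to\Ext^2(A,B)$ gives $\xi_0=\pi_*\eta$. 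By extension-closure, $\eta$ is an extension in $\mathcal W$.
\item Then $\xi_1\xi_0=(\pi^*\xi_1)\eta=0$ already in $\mathcal W$, because $\pi^*\xi_1$ splits.
\end{enumerate}
Since $J(T_1)\simeq\mods\A_{t-1}$ has enough projectives, this yields $\projdim_{J(T_1)}(T_i/T_1)\leq1$. Together with your Step 2 and \cref{lem:tiltistau}, that proves the lemma.

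The paper avoids Ext comparisons altogether. It observes that $T_i/T_1$ is a subobject of $T_t/T_1=f_{T_1}(P(t))$. The latter is projective in $J(T_1)$ because $P(t)$ is Ext-projective in ${}^\perp\tau T_1$ \cite[Prop. 3.14]{Jasso2015}. Finally, submodules of projective $\A_{t-1}$-modules have projective dimension at most $1$.
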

\begin{proof}
	First of all $\dim \Hom(T_1, P(1)) = 1 = \dim \Hom(P(1), P(i))$ implies $\dim \Hom(T_1, P(i)) = 1$ and since any nonzero morphism in $\Hom(P(1),P(i))$ and $\Hom(T_1, P(1))$ is a monomorphism, it follows that any nonzero morphism from $T_1$ to $P(i)$ is a monomorphism for all $i \in \{1, \dots, t\}$. 
	
	It then follows from a proof entirely analogous to that of \cref{lem:torfdesc} that for every summand $T_i \in \add T$, the inclusion map $\alpha_i: T_1 \to T_i $ is a minimal right $\Gen(T_1)$-approximation. It follows that $f_{T_1} (T_i) \simeq T_i/T_1$ for all $i=\{2,\dots, t\}$. 
	
	By \cite[Lem. 7.1]{DlabRingel1992}, the class of modules filtered by the standard modules of (the quasi-hereditary algebra) $\A_t$ is closed under submodules and contains all projective $\A_t$-modules. By \cite[Lem. 1]{BHRR1999}, this class of modules is equivalent to the class of $\A_t$-modules of projective dimension at most 1. In particular, every submodule of a projective $\A_t$-module has projective dimension at most 1. 
	
	From the chain of inclusions $T_1 \subset T_i \subset T_t = P(t)$ it follows that $f_{T_1}(T_i) = T_i/T_1$ is a submodule of $f_{T_1}(T_t) = T_t/T_1$ for all $i \in \{2, \dots, t\}$. By \cite[Prop. 4.5]{BuanMarsh2018t}, the modules $T_i/T_1$ and $T_t/T_1$ are $\tau$-rigid in $J(T_1)$. Moreover, sine $T_t= P(t)$ is projective, it is projective in ${}^\perp \tau T_1$, whence $T_t/T_1$ is projective in $J(T_1)$ by \cite[Prop. 3.14]{Jasso2015}, see also \cite[Lem. 4.9]{BuanMarsh2018t}. By the above, $T_i/T_1$ is therefore a subobject of a projective object in $J(T_1)$. Since $J(T_1) \simeq \mods \A_{t-1}$ by \cref{lem:reduction}, it follows that $T_i/T_1$ has projective dimension at most 1 in $J(T_1)$ for all $i \in \{2, \dots, t\}$. Therefore, it is partial tilting in $J(T_1)$ and the result follows.
\end{proof}

The previous lemma enables the proof of the final result of this article, establishing compatibility between the different kinds of mutation.

\begin{proposition}\label{prop:mutcomp}
	Let $T$ be a basic tilting $\A_t$-module and let $i \in \{2, \dots, t\}$. Assume that the complete exceptional sequence $\Phi(T)$ is left $\psi$-mutable at position $i$. Then there exists a tilting $\A_t$-module $T' \in \Gen T$ which differs from $T$ only in the direct summand $T_{i-1}$ and which is such that 
	\[ \Phi(T') = \psi_{i}(\Phi(T)) = \varphi_i(\Phi(T)). \]
	Conversely, assume that there exists a tilting $\A_t$-module $T'$ which differs from $T$ only in the summand $T_{i-1}$ and which is such that $T' \in \Gen T$. Then $\Phi(T)$ is left $\psi$-mutable at position $i$. 
\end{proposition}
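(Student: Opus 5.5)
We argue by induction on $t$. As in the proof of \cref{thm:mutthm}, the base case is mutation at position $i=2$. The inductive step reduces to this case using \cref{lem:tiltreduction} and \cref{lem:reduction}.

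\emph{Position $i=2$.} Write $\Phi(T)=(E_t,\dots,E_1)$. By the proof of \cref{thm:excepistauexcep} we have $E_1=T_1$, $E_j=T_j/T_{j-1}$ for $j \geq 2$, and $T_2$ is the universal coextension $0\to E_1\to T_2\to E_2\to 0$.

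Assume $\Phi(T)$ is left $\psi$-mutable at position 2. By \cref{prop:leftirregularprops}(1), $T_2\oplus E_2$ is partial tilting, so we set
\[ T' = T_t\oplus\dots\oplus T_2\oplus E_2 .\]
The main claim is that $T'$ is tilting, which requires $\Ext^1(T_j,E_2)=0=\Ext^1(E_2,T_j)$ for $j\geq 3$.

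\begin{itemize}
\item \emph{First vanishing.} The surjection $T_2\to E_2$ and $\projdim T_j\le 1$ give that $\Ext^1(T_j,-)$ is right exact on $0\to E_1\to T_2\to E_2\to 0$, so $\Ext^1(T_j,T_2)=0$ forces $\Ext^1(T_j,E_2)=0$.
\item \emph{Second vanishing.} Apply $\Hom(-,T_j)$ to $0\to E_1\to T_2\to E_2\to 0$. This yields
\[ \Hom(T_2,T_j)\to\Hom(E_1,T_j)\to\Ext^1(E_2,T_j)\to\Ext^1(T_2,T_j)=0 .\]
By \cref{lem:tiltapprox} and the equivalence $\add T\simeq\add\A_t$, every map $T_1\to T_j$ factors through $h_1\colon T_1\to T_2$, because in $\A_t$ every map $P(1)\to P(j)$ factors through $P(2)$. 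So the first map is surjective and $\Ext^1(E_2,T_j)=0$.
\end{itemize}

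Then $|T'|=t$ and $\projdim E_2\le 1$ by \cref{lem:completeexcepprop}(4), so $T'$ is tilting. We have $T'\in\Gen T$ since $E_2$ is a quotient of $T_2$. In its quasi-hereditary decomposition $E_2$ sits in position $1$: by \cref{lem:tiltdimhom} it is the unique summand with $\dim\Hom(P(t),-)=1$. This step needs checking.

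Then $\Phi(T')=(\dots,T_3/T_2,\,T_2/E_2,\,E_2)$, where $T_3/T_2=E_3$ is unchanged. We need $T_2/E_2\simeq\Lcal_{E_2}E_1$. This is exactly \cref{prop:leftirregularprops}(2), which gives $f_{E_2}^{-1}(\Lcal_{E_2}E_1)\simeq T_2$, combined with \cref{lem:torfdesc}. For $j\ge3$ the terms $f_{\bigoplus}(T_j)=T_j/T_{j-1}$ are unchanged. Indeed, the approximation $T_{j-1}\to T_j$ of \cref{lem:tiltapprox} is determined by $\End(T')\simeq\A_t$. Alternatively, two complete $\tau$-exceptional sequences differing in at most one term agree \cite[Thm. 2.1(a)]{BuanMarsh2023}. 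Finally \cref{thm:mutthm} gives $\psi_2=\varphi_2$.

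\emph{Converse at $i=2$.} Suppose $T'\in\Gen T$ is tilting and differs from $T$ only in the summand at position $1$. Its new summand $T_1'$ is then the other complement of the almost complete tilting module $T_t\oplus\dots\oplus T_2$. So there is an exchange sequence $0\to T_1\to T_2^{m}\to T_1'\to0$ (Happel--Unger). Since $\dim\Hom(T_1,T_2)=1$, we get $m=1$ and $T_1'\simeq T_2/T_1=E_2$.

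We must show that a nonzero map $E_2\to E_1$ is injective. If it were surjective instead (\cref{lem:completeexcepprop}(3)), then $E_1\in\Gen E_2\subseteq\Gen T'$. Also $T_2\in\Gen T'$, so $\Gen T'\supseteq\Gen T$. Combined with $T'\in\Gen T$ this gives $\Gen T'=\Gen T$, hence $T'\simeq T$, a contradiction. I expect this step, and identifying the positions in the quasi-hereditary decomposition of $T'$, to be the main obstacle. A fallback is to use \cite[Lem. 4.7, Cor. 4.8]{HillePloog2019}, whose characterisation of mutability is phrased via worm diagrams.

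\emph{Induction.} For $i\ge3$, pass to $J(T_1)\simeq\mods\A_{t-1}$. There $\bar T=T_t/T_1\oplus\dots\oplus T_2/T_1$ is tilting by \cref{lem:tiltreduction}, and $\Phi(T)=(\Phi^{J(T_1)}(\bar T),E_1)$. Apply the inductive hypothesis at position $i-1$ to get $\bar T'$. Lift $\bar T'$ back via $f_{T_1}^{-1}$; by \cref{rmk:notationjustification} this yields $T'=f_{T_1}^{-1}(\bar T')\oplus T_1$. Check that $T'$ is tilting and lies in $\Gen T$, using $\Gen$-compatibility of the bijection $f_{T_1}$. The converse follows by the same reduction.
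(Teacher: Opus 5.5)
Your argument at position $i=2$ is correct, but it takes a different route from the paper.

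The paper's route is as follows.
\begin{itemize}
\item It first proves $T_1\notin\Gen(T/T_1)$: an epimorphism $T_2\to T_1$ would be a scalar multiple of $T_2\twoheadrightarrow f_{T_1}(T_2)\hookrightarrow T_1$, because $\dim\Hom(T_2,T_1)=1$.
\item It then takes $T'$ to be the AIR left mutation of $T$ at $T_1$, namely $T/T_1\oplus T_2/T_1$.
\item It gets tilting from $\projdim(T_2/T_1)\le 1$.
\item It identifies $\Phi(T')$ with $\psi_2(\Phi(T))$ by the uniqueness theorem for complete $\tau$-exceptional sequences differing in one term.
\end{itemize}

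You instead write $T'$ down and check the $\Ext^1$-vanishing by hand. Your factorisation of maps $T_1\to T_j$ through $h_1$ also follows directly by iterating \cref{lem:tiltapprox}(1). You fix the quasi-hereditary positions of $T'$ with \cref{lem:tiltdimhom}, a point the paper leaves implicit. You then read off the new term from \cref{prop:leftirregularprops}(2) and \cref{lem:torfdesc}. This avoids both AIR mutation and the uniqueness theorem.

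For $j\ge 3$, your reason why the terms $T_j/T_{j-1}$ are unchanged is vague. A clean one: for both $T$ and $T'$ the torsion part of $T_j$ is the trace of $T_{j-1}$ in $T_j$. Indeed, maps out of $T_1$ factor through $T_2$, and a map $E_2\to T_j$ has the same image as its composite with $T_2\twoheadrightarrow E_2$.

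Your converse is the paper's argument. The Happel--Unger identification of the new summand is not needed: $E_1\in\Gen E_2\subseteq\Gen T_2\subseteq\Gen T'$ already forces $\Gen T'=\Gen T$.

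The gap is in the inductive step for $i\ge 3$, where you leave ``check that $T'$ is tilting'' undone.

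First, \cref{rmk:notationjustification} is a bijection on single indecomposables. It does not tell you that $T_1\oplus f_{T_1}^{-1}(\bar T')$ is $\tau$-rigid. For this, and for $T'\in\Gen T$, you need Jasso's reduction theorem \cite{Jasso2015}: $M\mapsto f_{T_1}(M)$ is an order-preserving bijection from support $\tau$-tilting modules with $T_1\in\add M$ to support $\tau$-tilting objects of $J(T_1)$, and it sends $T$ to $\bar T$.

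Even then, nothing in your sketch gives $\projdim T'\le 1$. Projective dimension at most $1$ in $J(T_1)\simeq\mods\A_{t-1}$ says nothing about projective dimension in $\mods\A_t$. Moreover, the new summand of $T'$ is no longer $E_i$, so the $\Ext$-computation from $i=2$ does not transfer. For example, take $t=3$, $T=P(3)\oplus P(2)\oplus P(1)$ and $i=3$. The summand replacing $P(2)$ is $\begin{smallmatrix}1&&3\\&2&\\&&3\end{smallmatrix}$, not $E_3=S(3)$ (see \cref{fig:stautiltgraph}).

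The missing observation is short. $T_t=P(t)$ remains a summand of $T'$, and $P(t)$ is faithful since $\A_t=\bigoplus_j P(j)\hookrightarrow P(t)^t$. A faithful $\tau$-tilting module is tilting \cite{AIR2014}.

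Once $T'$ is tilting, \cref{lem:tiltdimhom} forces $T'_k\simeq T_k$ for $k\neq i-1$. Hence $\Phi(T')=(\Phi^{J(T_1)}(\bar T'),T_1)$. The induction hypothesis together with the wideness of $J(T_1)$, as in \cref{thm:mutthm}, then gives $\Phi(T')=\psi_i(\Phi(T))=\varphi_i(\Phi(T))$.

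You should also say why the quasi-hereditary decompositions of $\bar T$ and $\bar T'$ in $J(T_1)\simeq\mods\A_{t-1}$ are the orderings induced from $T$ and $T'$. You use this, and the paper's reduction likewise takes it for granted.
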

\begin{proof}	
	Assume that $\Phi(T)$ is left $\psi$-mutable at position $i$. Consider first the case $i=2$. Assume for a contradiction that $T_1 \in \Gen(T/T_1)$. Then \cref{lem:tiltapprox}(2) implies that there is an epimorphism $f: T_2 \twoheadrightarrow T_1$. 
	Recall the equivalence of categories $\add(T) \simeq \add(\A_t)$ induced by \cref{thm:tiltingendo}, from which it follows that
	\begin{equation}\label{eq:mutcompdim} \dim \Hom(T_2, T_1) = \dim \Hom(P(2), P(1)) = 1,\end{equation}
	
	Denote by $\ell$ the composition of morphisms
	\[ \begin{tikzcd} T_2 \arrow[r, twoheadrightarrow, "\ell_1"] & f_{T_1} (T_2)  \arrow[r, hookrightarrow, "\ell_2"] & T_1  \end{tikzcd}, \]
	where a nonzero $\ell_2$ exists because $\Phi(T)$ is left $\psi$-mutable at position $2$ and a nonzero $\ell_1$ exists since $T_1$ is a proper submodule of $T_2$ by \cref{lem:tiltapprox}(1). By \cref{eq:mutcompdim}, there exists some nonzero $\lambda \in K$ such that $f = \lambda \ell$. However, since $f$ is an epimorphism, the morphism $\ell_2$ must be an epimorphism as well, a contradiction. It follows that $T_1 \not \in \Gen(T/T_1)$, and thus ${}^\perp \tau(T/T_1) \subseteq {}^\perp \tau T_1$ by \cite[Prop. 2.22]{AIR2014}.
	
	Consider a short exact sequence
	\[ 0 \to T_1 \xrightarrow{g} T_2 \to T_2/T_1 \to 0 \]
	where $g$ is a minimal left $\add(T/T_1)$-approximation of $T_1$ by \cite[Lem. 4.3(2)]{IyamaZhang2020}. Since ${}^\perp \tau(T/T_1) \subseteq {}^\perp \tau T_1$, it is possible to apply \cite[Thm. 2.30]{AIR2014} to obtain that the module 
	\[ T' \coloneqq T_t \oplus \dots \oplus T_2 \oplus T_2/T_1 \]
	is a basic $\tau$-tilting $\A_t$-module, which is known as the left (AIR-)mutation of $T$ at $T_1$. Clearly $T' \in \Gen T$. Since $\projdim (T_2/T_1) \leq 1$ by \cref{lem:completeexcepprop}(4), it follows that $\projdim T' \leq 1$ and hence $T'$ is a tilting $\A_t$-module by \cref{lem:tiltistau}. Moreover, \cref{lem:torfdesc} yields 
	\[ \Phi(T') = (T_t/T_{t-1}, \dots T_3/T_2 , f_{T_2/T_1} (T_2) , T_2/T_1). \]
	By \cref{thm:excepistauexcep}, this is a ($\tau$-)exceptional sequence which differs from the ($\tau$-)exceptional sequence 
	\[ \psi_2(\Phi(T)) = (T_t/T_{t-1}, \dots, T_3/T_2, \mathcal{L}_{T_2/T_1} T_1, T_2/T_1) \]
	in at most one position. By \cite[Thm. 2.1(a)]{BuanMarsh2023}, see also \cite[Thm. 8]{HansonThomas2024}, they therefore coincide. In conclusion
	\[ \Phi(T') = \psi_2 (\Phi(T)) = \varphi_2 (\Phi(T)). \]
	
	The general case for $i \in \{3, \dots, t\}$ follows entirely analogously in the subcategory $J(T_{i-2} \oplus \dots \oplus T_1)$, which is isomorphic to $\mods \A_{t-i+2}$ by \cref{lem:reduction}. Applying $f_{T_{i-2}}$ to each summand of $T/(T_{i-2} \oplus \dots \oplus T_1)$ yields a tilting object in the subcategory by iteratively invoking \cref{lem:tiltreduction}, see \cite[Cor. 1.7]{BuanMarsh2021w} about the associativity of this process. This yields the setup of the $i=2$ case. In this case, the assumption that $T_{i-1} \in \Gen(T/T_{i-1})$ implies that there is an epimorphism $\epsilon: T_{i-2} \oplus T_i \to T_{i-1}$ by \cref{lem:tiltapprox}. Note that $f_{T_{i-2}}$ is a functor, which yields an epimorphism $f_{T_{i-2}}(\epsilon): f_{T_{i-2}} T_i \to f_{T_{i-2}} T_{i-1}$.  from where the proof carries over exactly as before in the subcategory $J(T_{i-2} \oplus \dots \oplus T_1)$.

	For the converse, assume that a tilting $\A_t$-module $T' \in \Gen T$ exists which differs from $T$ only in the summand $T_{i-1}$ . Consider first the case $i=2$. Assume that $\Phi(T)$ cannot be $\psi$-mutated at position $2$. By \cref{lem:completeexcepprop}(1) and (3) it follows that there is a nonzero surjection $T_2/T_1 \to T_1$. This means that $T_1 \in \Gen T_2 \subseteq \Gen (T/T_1)$. However, since $T' \in \Gen T$ also implies $\Gen T' \subseteq T$, this leads to the chain of inclusions
	\[ \Gen T' \subseteq \Gen T \subseteq \Gen(T/T_1) \subseteq \Gen(T').  \]
	Therefore $\Gen T' = \Gen T$, which implies $T= T'$ by \cite[Thm. 2.7]{AIR2014} as both $T$ and $T'$ are ($\tau$-)tilting modules. This is a contradiction. For the general case, iteratively applying \cref{lem:tiltreduction}, allows the use of the same argument in the subcategory $J(T_{i-2} \oplus \dots \oplus T_1)$, which is isomorphic to $\mods \A_{t-i+2}$ by \cref{lem:reduction}. This completes the proof.
\end{proof}

\section{Example}\label{sec:example}
This section illustrates the main theorems of this article for the case $t=3$. Recall from \cref{sec:prelim} that the algebra $\A_3$ is isomorphic to 
\[ K \left( \begin{tikzcd} 1 \arrow[r, "a_1", shift left] & 2 \arrow[r, "a_2", shift left] \arrow[l, "b_1", shift left] & 3 \arrow[l, "b_2", shift left] \end{tikzcd} \right) / \langle a_1 b_1, a_2b_2 - b_1a_1 \rangle . \]
In \cref{fig:stautiltgraph} and \cref{fig:A3seqs}, indecomposable $\A_3$-modules are written via their composition series. First of all, consider the tilting $\A_3$-modules highlighted in {\color{purple} \textbf{purple}} in \cref{fig:stautiltgraph}. By \cref{lem:tiltistau}, these are also $\tau$-tilting modules and they embed into the exchange quiver of $\tau$-tilting pairs, as illustrated in \cref{fig:stautiltgraph}. In \cref{fig:stautiltgraph}, an arrow is drawn from a $\tau$-tilting pair $T \oplus P$ to another $\tau$-tilting module pair $T' \oplus P'$ whenever $\Gen T'  \subset \Gen T$ holds.

\begin{figure}[ht!]
\[
\begin{tikzcd}[ampersand replacement =\&, column sep=15, row sep = 30]
    \& \& {\color{purple}\boldsymbol{\begin{smallmatrix} &&3\\&2&\\1&&3\\&2&\\&&3 \end{smallmatrix}  \oplus \begin{smallmatrix} &2&\\1&&3\\&2&\\&&3 \end{smallmatrix} \oplus \begin{smallmatrix} 1&&\\&2&\\&&3 \end{smallmatrix}}} \arrow[ld,purple, thick, "\mu_1"] \arrow[d, purple, thick, "\mu_2",swap] \arrow[lld] \\
    \begin{smallmatrix} &2\\1& \end{smallmatrix}  \oplus \begin{smallmatrix} &2&\\1&&3\\&2&\\&&3 \end{smallmatrix}  \oplus \begin{smallmatrix} 1&&\\&2&\\&&3 \end{smallmatrix} \arrow[d] \& {\color{purple} \boldsymbol{\begin{smallmatrix} &&3\\&2&\\1&&3\\&2&\\&&3 \end{smallmatrix} \oplus \begin{smallmatrix} &2&\\1&&3\\&2&\\&&3 \end{smallmatrix} \oplus \begin{smallmatrix} 2&\\&3 \end{smallmatrix}}} \arrow[d, purple, thick, "\mu_2",] \arrow[ld] \& {\color{purple} \boldsymbol{ \begin{smallmatrix} &&3\\&2&\\1&&3\\&2&\\&&3 \end{smallmatrix} \oplus \begin{smallmatrix} 1&&3\\&2&\\&&3 \end{smallmatrix} \oplus \begin{smallmatrix} 1&&\\&2&\\&&3 \end{smallmatrix}}} \arrow[d, purple, thick, "\mu_1",swap] \arrow[rd] \&\\
    \begin{smallmatrix} &2\\1& \end{smallmatrix} \oplus \begin{smallmatrix} &2&\\1&&3\\&2&\\&&3 \end{smallmatrix} \oplus \begin{smallmatrix} 2&\\&3 \end{smallmatrix}  \& {\color{purple} \boldsymbol{\begin{smallmatrix} &&3\\&2&\\1&&3\\&2&\\&&3 \end{smallmatrix} \oplus \begin{smallmatrix} &&3\\&2&\\&&3 \end{smallmatrix} \oplus \begin{smallmatrix} 2&\\&3 \end{smallmatrix}}} \arrow[d, purple, thick, "\mu_1"] \arrow[ld] \& {\color{purple} \boldsymbol{ \begin{smallmatrix} &&3\\&2&\\1&&3\\&2&\\&&3 \end{smallmatrix} \oplus \begin{smallmatrix} 1&&3\\&2&\\&&3 \end{smallmatrix} \oplus \begin{smallmatrix} 3 \end{smallmatrix}}} \arrow[ld, purple, thick, "\mu_2", swap] \arrow[d] \& \begin{smallmatrix} 1 \end{smallmatrix} \oplus \begin{smallmatrix} 1&&3\\&2&\\&&3 \end{smallmatrix} \oplus \begin{smallmatrix} 1&&\\&2&\\&&3 \end{smallmatrix} \arrow[ld] \arrow[d] \\
    \begin{smallmatrix} &&3\\&2&\\&&3 \end{smallmatrix} \oplus \begin{smallmatrix} 2&\\&3 \end{smallmatrix} \oplus P(1)[1] \arrow[rd] \arrow[d] \& {\color{purple}\boldsymbol{ \begin{smallmatrix} &&3\\&2&\\1&&3\\&2&\\&&3 \end{smallmatrix} \oplus \begin{smallmatrix} &&3\\&2&\\&&3 \end{smallmatrix} \oplus \begin{smallmatrix} 3 \end{smallmatrix}}} \arrow[d] \& \begin{smallmatrix} 1 \end{smallmatrix} \oplus \begin{smallmatrix} 1&&3\\&2&\\&&3 \end{smallmatrix} \oplus \begin{smallmatrix} 3 \end{smallmatrix} \arrow[d] \& \begin{smallmatrix} 1 \end{smallmatrix} \oplus \begin{smallmatrix} 1&\\&2 \end{smallmatrix} \oplus \begin{smallmatrix} 1&&\\&2&\\&&3 \end{smallmatrix} \arrow[d] \\
     \begin{smallmatrix} 3 \end{smallmatrix} \oplus \begin{smallmatrix} 2&\\&3 \end{smallmatrix}\oplus P(1)[1]  \& \begin{smallmatrix} &&3\\&2&\\&&3 \end{smallmatrix} \oplus \begin{smallmatrix} 3 \end{smallmatrix} \oplus P(1)[1] \& \begin{smallmatrix} 1 \end{smallmatrix} \oplus \begin{smallmatrix} 3 \end{smallmatrix} \oplus P(2)[1] \& \begin{smallmatrix} 1 \end{smallmatrix} \oplus \begin{smallmatrix} 1&\\&2 \end{smallmatrix} \oplus P(3)[1]
\end{tikzcd}
\]
    \caption{The embedding of tilting $\A_3$-modules (\color{purple}purple\color{black}) into part of the exchange quiver of $\tau$-tilting pairs in $\mods \A_3$. Tilting modules are written in their quasi-hereditary decomposition.}
    \label{fig:stautiltgraph}
\end{figure}
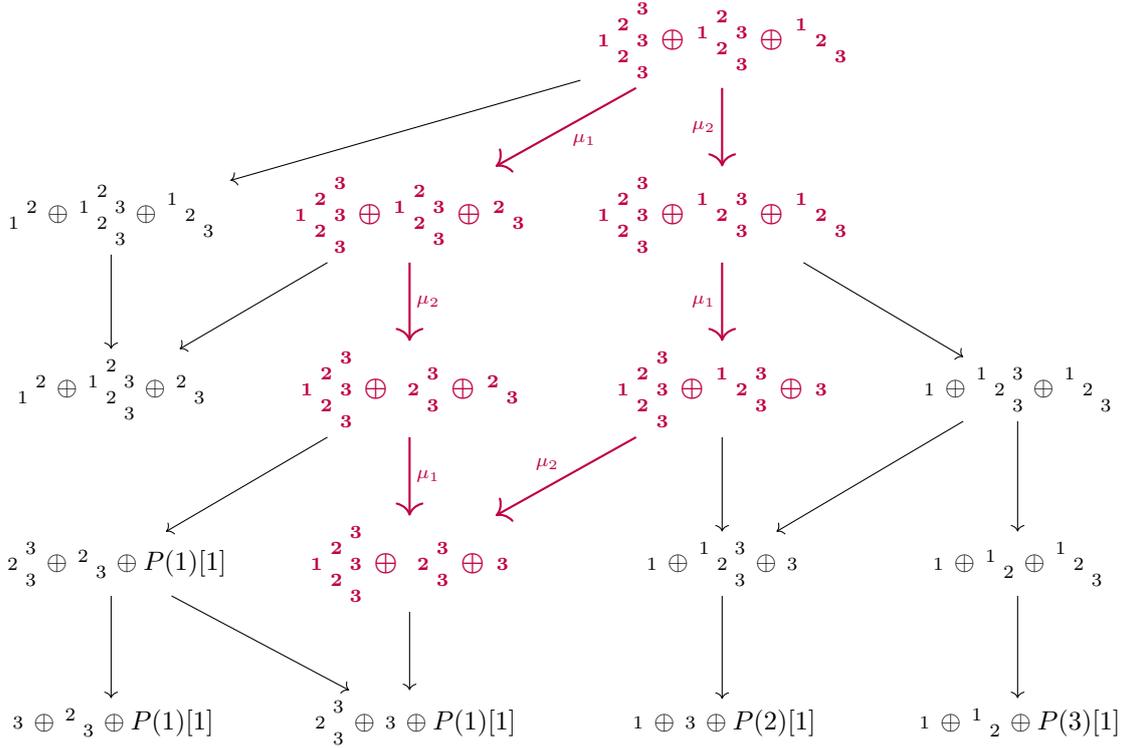

Moreover, \cref{thm:excepistauexcep} shows that from each basic tilting $\A_3$-module it is possible to construct a complete exceptional sequence in $\mods \A_3$ via the bijection $\Phi$ of \cref{thm:MendozaTreff}. In addition, every complete exceptional sequence in $\mods \A_3$ arises in this way and by the definition of $\Phi$, all of these complete exceptional sequences are $\tau$-exceptional sequences. It was shown in \cref{thm:mutthm} that the ($\psi$-)mutation of complete exceptional sequences in $\mods \A_t$ is a special case of the mutation of ($\varphi$-)exceptional sequences. This is illustrated in \cref{fig:A3seqs}, where exceptional sequences are highlighted {\color{purple} \textbf{purple}} and arrows are drawn to indicate left mutations between ($\tau$-)exceptional sequences.

Comparing the two figures, one sees that the subsets corresponding to tilting modules, respectively exceptional sequences, have the same shape. This illlustrates the behaviour established by \cref{prop:mutcomp}.

\begin{figure}[ht!]
\[ 
\begin{tikzcd}[ampersand replacement =\&, row sep=25]
    \left(\begin{smallmatrix} 3 \end{smallmatrix}, \begin{smallmatrix} 1 \end{smallmatrix}, \begin{smallmatrix} &2&\\1&&3\\&2&\\&&3 \end{smallmatrix}\right) \arrow[r, "\varphi_2",swap]  \& \left(\begin{smallmatrix} 3 \end{smallmatrix}, \begin{smallmatrix} 1&&\\&2&\\ &&3\end{smallmatrix}, \begin{smallmatrix} 1 \end{smallmatrix}\right) \arrow[r, "\varphi_2",swap] \& {\color{purple}\boldsymbol{ \left(\begin{smallmatrix} 3 \end{smallmatrix} , \begin{smallmatrix} 2&\\&3 \end{smallmatrix} , \begin{smallmatrix} 1&&\\&2&\\&&3 \end{smallmatrix}\right)}} \arrow[ld, thick, purple, "\varphi_2"', "\psi_2"] \arrow[d, thick, purple, "\varphi_3", "\psi_3"'] \& \left(\begin{smallmatrix} 2&\\&3 \end{smallmatrix}, \begin{smallmatrix} 2 \end{smallmatrix}, \begin{smallmatrix} 1&&\\&2&\\&&3 \end{smallmatrix}\right) \arrow[l, "\varphi_3"] \\
    \left(\begin{smallmatrix} &2&\\1&&3 \end{smallmatrix}, \begin{smallmatrix} &2\\1& \end{smallmatrix}, \begin{smallmatrix} 2&\\&3 \end{smallmatrix}\right) \arrow[r, "\varphi_3",swap] \& {\color{purple}\boldsymbol{\left(\begin{smallmatrix} 3 \end{smallmatrix} , \begin{smallmatrix} &2&\\1&&3 \end{smallmatrix}, \begin{smallmatrix} 2&\\&3 \end{smallmatrix}\right)}} \arrow[d, thick, purple, "\varphi_3"',"\psi_3"] \arrow[lu, "\varphi_2",swap] \& {\color{purple}\boldsymbol{\left(\begin{smallmatrix} &3\\2& \end{smallmatrix} , \begin{smallmatrix} 3 \end{smallmatrix}, \begin{smallmatrix} 1&&\\&2&\\&&3 \end{smallmatrix}\right)}}  \arrow[d, purple, thick, "\varphi_2", "\psi_2"'] \arrow[r, "\varphi_3"] \& \left(\begin{smallmatrix} 2 \end{smallmatrix}, \begin{smallmatrix} &3\\2&\\&3 \end{smallmatrix}, \begin{smallmatrix} 1&&\\&2&\\&&3 \end{smallmatrix}\right) \arrow[u, "\varphi_3"] \\
   \left(\begin{smallmatrix} &2\\1& \end{smallmatrix}, \begin{smallmatrix} &&3\\&2&\\1&&3 \end{smallmatrix}, \begin{smallmatrix} 2&\\&3 \end{smallmatrix}\right) \arrow[u, "\varphi_3",swap] \& {\color{purple}\boldsymbol{\left(\begin{smallmatrix} &&3\\&2&\\1&& \end{smallmatrix} , \begin{smallmatrix} 3 \end{smallmatrix}, \begin{smallmatrix} 2&\\&3 \end{smallmatrix}\right)}} \arrow[d, thick, purple, "\varphi_2"', "\psi_2"] \arrow[l, "\varphi_3",swap] \& {\color{purple}\boldsymbol{\left(\begin{smallmatrix} &3\\2& \end{smallmatrix} , \begin{smallmatrix} 1&&3\\&2& \end{smallmatrix}, \begin{smallmatrix} 3 \end{smallmatrix}\right)}} \arrow[ld, thick, purple, "\varphi_3", "\psi_3"'] \arrow[r, "\varphi_2"] \& \left(\begin{smallmatrix} &3\\2& \end{smallmatrix}, \begin{smallmatrix} 1&\\&2 \end{smallmatrix}, \begin{smallmatrix} 1&&3\\&2&\\&&3 \end{smallmatrix}\right) \arrow[lu, "\varphi_2"] \\
    \left(\begin{smallmatrix} &&3\\&2&\\1&& \end{smallmatrix}, \begin{smallmatrix} 2 \end{smallmatrix}, \begin{smallmatrix} &&3\\&2&\\&&3 \end{smallmatrix}\right) \arrow[ru, "\varphi_2",swap] \& {\color{purple}\boldsymbol{\left(\begin{smallmatrix} &&3\\&2&\\1&& \end{smallmatrix} , \begin{smallmatrix} &3\\2& \end{smallmatrix}, \begin{smallmatrix} 3 \end{smallmatrix}\right)}} \arrow[l, "\varphi_2",swap] \arrow[r, "\varphi_3"] \& \left(\begin{smallmatrix} 1 \end{smallmatrix}, \begin{smallmatrix} &&3\\&2&\\1&&3\\&2& \end{smallmatrix}, \begin{smallmatrix} 3 \end{smallmatrix}\right) \arrow[r, "\varphi_3"] \& \left(\begin{smallmatrix} 1&&3\\&2& \end{smallmatrix}, \begin{smallmatrix} 1 \end{smallmatrix}, \begin{smallmatrix} 3 \end{smallmatrix}\right) \arrow[lu, "\varphi_3"]
\end{tikzcd}\]
    \caption{A subset of the complete $\tau$-exceptional sequences of $\mods \A_3$ with arrows corresponding to left $\varphi$-mutations. The set of complete exceptional sequences and the corresponding $\psi$-mutation in $\mods \A_3$ form a subset which is highlighted in {\color{purple}\textbf{purple}}.}
    \label{fig:A3seqs}
\end{figure}
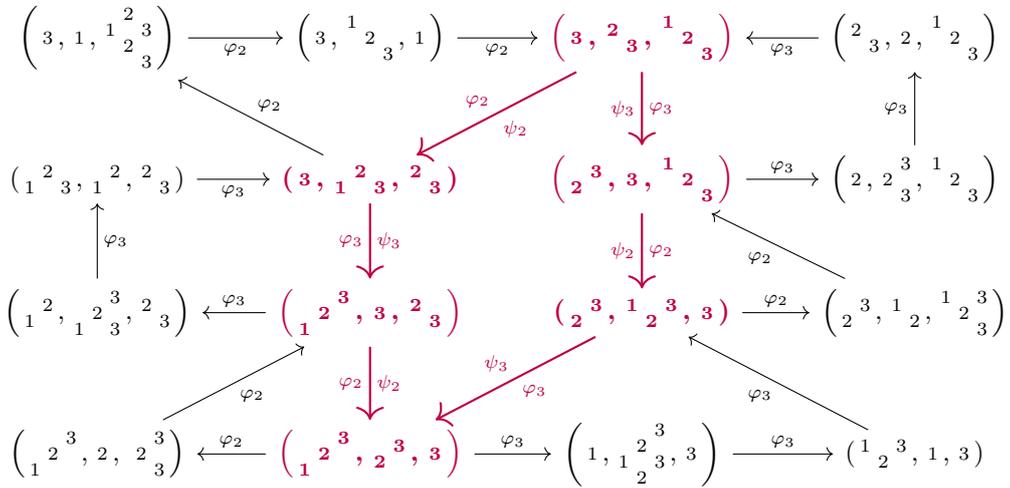

\bibliographystyle{ourIEEEstyle.bst}
\bibliography{main}

@article{HillePloog2019,
 author = {Hille, Lutz and Ploog, David},
 title = {Exceptional sequences and spherical modules for the {Auslander} algebra of $k[x]/(x^t)$},
 fjournal = {Pacific Journal of Mathematics},
 journal = {Pac. J. Math.},
 volume = {302},
 number = {2},
 pages = {599--625},
 year = {2019},
}

@misc{Gorodentsev1990,
 author = {Gorodentsev, A. L.},
 title = {Exceptional objects and mutations in derived categories},
 year = {1990},
 language = {English},
 howpublished = {Helices and vector bundles, {Proc}. {Semin}. {Rudakov}, {Lond}. {Math}. {Soc}. {Lect}. {Note} {Ser}. 148, 57-73},
}

@article{MendozaTreffinger,
 author = {Mendoza, Octavio and Treffinger, Hipolito},
 title = {Stratifying systems through $\tau$-tilting theory},
 fjournal = {Documenta Mathematica},
 journal = {Doc. Math.},
 volume = {25},
 pages = {701--720},
 year = {2020},
}

@misc{BHM2024,
      title={Mutation of $\tau$-exceptional pairs and sequences}, 
      author={Aslak B. Buan and Eric J. Hanson and Bethany R. Marsh},
      year={2024},
      eprint={2402.10301},
      archivePrefix={arXiv},
      primaryClass={math.RT},
      url={https://arxiv.org/abs/2402.10301}, 
}

@article{BuanMarsh2018t,
 author = {Buan, Aslak Bakke and Marsh, Bethany Rose},
 title = {$\tau$-exceptional sequences},
 fjournal = {Journal of Algebra},
 journal = {J. Algebra},
 volume = {585},
 pages = {36--68},
 year = {2021},
}

@article{IyamaZhang2020,
 author = {Iyama, Osamu and Zhang, Xiaojin},
 title = {Classifying $\tau$-tilting modules over the {Auslander} algebra of $K[x]/(x^n)$},
 fjournal = {Journal of the Mathematical Society of Japan},
 journal = {J. Math. Soc. Japan},
 volume = {72},
 number = {3},
 pages = {731--764},
 year = {2020},
}

@book{ARS1995,
 author = {Auslander, Maurice and Reiten, Idun and Smal{\o}, Sverre O.},
 title = {Representation theory of {Artin} algebras},
 fseries = {Cambridge Studies in Advanced Mathematics},
 series = {Camb. Stud. Adv. Math.},
 volume = {36},
 year = {1995},
 publisher = {Cambridge: Cambridge University Press},
}

@article{BHRR1999,
 author = {Br{\"u}stle, Thomas and Hille, Lutz and Ringel, Claus M. and R{\"o}hrle, Gerhard},
 title = {The {$\Delta$}-filtered modules without self-extensions for the {Auslander} algebra of $k[T]/\langle T^n \rangle$},
 fjournal = {Algebras and Representation Theory},
 journal = {Algebr. Represent. Theory},
 volume = {2},
 number = {3},
 pages = {295--312},
 year = {1999},
}

@misc{BKT2025,
      title={Mutating ordered $\tau$-rigid modules with applications to {N}akayama algebras}, 
      author={Aslak B. Buan and Maximilian Kaipel and Håvard U. Terland},
      year={2025},
      eprint={2501.13694},
      archivePrefix={arXiv},
      primaryClass={math.RT},
      url={https://arxiv.org/abs/2501.13694}, 
}

@article{BuanMarsh2023,
 author = {Buan, Aslak B. and Marsh, Bethany R.},
 title = {Mutating signed $\tau$-exceptional sequences},
 fjournal = {Glasgow Mathematical Journal},
 journal = {Glasg. Math. J.},
 volume = {65},
 number = {3},
 pages = {716--729},
 year = {2023},
}

@article{HansonThomas2024,
 author = {Hanson, Eric J. and Thomas, Hugh},
 title = {A uniqueness property of $\tau$-exceptional sequences},
 fjournal = {Algebras and Representation Theory},
 journal = {Algebr. Represent. Theory},
 volume = {27},
 number = {1},
 pages = {461--468},
 year = {2024},
}

@article{HillePerling2014,
 author = {Hille, Lutz and Perling, Markus},
 title = {Tilting bundles on rational surfaces and quasi-hereditary algebras},
 fjournal = {Annales de l'Institut Fourier},
 journal = {Ann. Inst. Fourier},
 volume = {64},
 number = {2},
 pages = {625--644},
 year = {2014},
}

@article{BST2019,
 author = {Br{\"u}stle, Thomas and Smith, David and Treffinger, Hipolito},
 title = {Wall and chamber structure for finite-dimensional algebras},
 fjournal = {Advances in Mathematics},
 journal = {Adv. Math.},
 volume = {354},
 pages = {31},
 note = {Id/No 106746},
 year = {2019},
}

@misc{LiZhang2015,
      title={Some applications of $\tau$-tilting theory}, 
      author={Shen Li and Shunhua Zhang},
      year={2015},
      eprint={1512.03613},
      archivePrefix={arXiv},
      primaryClass={math.RT},
      url={https://arxiv.org/abs/1512.03613}, 
}

@article{AIR2014,
 author = {Adachi, Takahide and Iyama, Osamu and Reiten, Idun},
 title = {$\tau$-tilting theory.},
 fjournal = {Compositio Mathematica},
 journal = {Compos. Math.},
 volume = {150},
 number = {3},
 pages = {415--452},
 year = {2014},
}

@article{Auslander1974b,
 author = {Auslander, Maurice},
 title = {Representation theory of {Artin} algebras. {II}},
 fjournal = {Communications in Algebra},
 journal = {Commun. Algebra},
 issn = {0092-7872},
 volume = {1},
 pages = {269--310},
 year = {1974},
}

@article{Geuenich2022,
 author = {Geuenich, Jan},
 title = {Tilting modules for the {Auslander} algebra of $K[x]/(x^n)$},
 fjournal = {Communications in Algebra},
 journal = {Commun. Algebra},
 issn = {0092-7872},
 volume = {50},
 number = {1},
 pages = {82--95},
 year = {2022},
}

@article{Dickson66,
 author = {Dickson, Spencer E.},
 title = {A torsion theory for {Abelian} categories},
 fjournal = {Transactions of the American Mathematical Society},
 journal = {Trans. Am. Math. Soc.},
 issn = {0002-9947},
 volume = {121},
 pages = {223--235},
 year = {1966},
}

@article{BuanMarsh2021w,
 author = {Buan, Aslak Bakke and Marsh, Bethany Rose},
 title = {A category of wide subcategories},
 fjournal = {IMRN. International Mathematics Research Notices},
 journal = {Int. Math. Res. Not.},
 issn = {1073-7928},
 volume = {2021},
 number = {13},
 pages = {10278--10338},
 year = {2021},
}

@article{BuanHanson2023,
 author = {Buan, Aslak Bakke and Hanson, Eric J.},
 title = {$\tau$-perpendicular wide subcategories},
 fjournal = {Nagoya Mathematical Journal},
 journal = {Nagoya Math. J.},
 issn = {0027-7630},
 volume = {252},
 pages = {959--984},
 year = {2023},
}

@article{Jasso2015,
    author = {Jasso, Gustavo},
    title = {Reduction of $\tau$-tilting modules and torsion pairs},
    journal = {Int. Math. Res. Not.},
    volume = {2015},
    number = {16},
    pages = {7190-7237},
    year = {2015}
}

@article{Kapranov1988,
 author = {Kapranov, M. M.},
 title = {On the derived categories of coherent sheaves on some homogeneous spaces},
 fjournal = {Inventiones Mathematicae},
 journal = {Invent. Math.},
 issn = {0020-9910},
 volume = {92},
 number = {3},
 pages = {479--508},
 year = {1988},
}

@article{Beilinson1979,
 author = {Beilinson, A. A.},
 title = {Coherent sheaves on $\mathbb{P}^n$ and problems of linear algebra},
 fjournal = {Functional Analysis and its Applications},
 journal = {Funct. Anal. Appl.},
 issn = {0016-2663},
 volume = {12},
 pages = {214--216},
 year = {1979},
}

@InCollection{cbw92,
 Author = {Crawley-Boevey, William},
 Title = {Exceptional sequences of representations of quivers},
 BookTitle = {Representations of algebras. (Ottawa, ON, 1992)}, 
 Pages = {117--124},
series ={CMS Conf. Proc., vol. 14},
 Year = {1993},
 Publisher = {Amer. Math. Soc., Providence, RI},
}

@article{Macri2007,
 author = {Macrì, Emanuele},
 title = {Stability conditions on curves},
 fjournal = {Mathematical Research Letters},
 journal = {Math. Res. Lett.},
 issn = {1073-2780},
 volume = {14},
 number = {4},
 pages = {657--672},
 year = {2007},
}

@article{PrabhuNaik2017,
 author = {Prabhu-Naik, Nathan},
 title = {Tilting bundles on toric {Fano} fourfolds},
 fjournal = {Journal of Algebra},
 journal = {J. Algebra},
 issn = {0021-8693},
 volume = {471},
 pages = {348--398},
 year = {2017},
}

@article{CHS2025,
 author = {Chang, Wen and Haiden, Fabian and Schroll, Sibylle},
 title = {Braid group actions on branched coverings and full exceptional sequences},
 fjournal = {Advances in Mathematics},
 journal = {Adv. Math.},
 issn = {0001-8708},
 volume = {472},
 pages = {24},
 note = {Id/No 110284},
 year = {2025},
}

@article{Kawamata2006,
 author = {Kawamata, Yujiro},
 title = {Derived categories of toric varieties},
 fjournal = {Michigan Mathematical Journal},
 journal = {Mich. Math. J.},
 issn = {0026-2285},
 volume = {54},
 number = {3},
 pages = {517--535},
 year = {2006},
}

@article{Bondal1990,
 author = {Bondal, A. I.},
 title = {Representation of associative algebras and coherent sheaves},
 fjournal = {Mathematics of the USSR. Izvestiya},
 journal = {Math. USSR, Izv.},
 issn = {0025-5726},
 volume = {34},
 number = {1},
 pages = {23--42},
 year = {1990},
}

@article{Krause2017,
 author = {Krause, Henning},
 title = {Highest weight categories and recollements},
 fjournal = {Annales de l'Institut Fourier},
 journal = {Ann. Inst. Fourier},
 issn = {0373-0956},
 volume = {67},
 number = {6},
 pages = {2679--2701},
 year = {2017},
}

@article{HilleRoehrle1999,
 author = {Hille, L. and Röhrle, G.},
 title = {A classification of parabolic subgroups of classical groups with a finite number of orbits on the unipotent radical},
 fjournal = {Transformation Groups},
 journal = {Transform. Groups},
 issn = {1083-4362},
 volume = {4},
 number = {1},
 pages = {35--52},
 year = {1999},
}

@article{HillePloog2019b,
 author = {Hille, Lutz and Ploog, David},
 title = {Tilting chains of negative curves on rational surfaces},
 fjournal = {Nagoya Mathematical Journal},
 journal = {Nagoya Math. J.},
 issn = {0027-7630},
 volume = {235},
 pages = {26--41},
 year = {2019},
}

@misc{Nonis2025,
      title={$\tau$-exceptional sequences for representations of quivers over local algebras}, 
      author={Iacopo Nonis},
      year={2025},
      eprint={2502.15417},
      archivePrefix={arXiv},
      primaryClass={math.RT},
      url={https://arxiv.org/abs/2502.15417}, 
}

@misc{Nonis2025b,
      title={Mutation of $\tau$-exceptional sequences for acyclic quivers over local algebras}, 
      author={Iacopo Nonis},
      year={2025},
      eprint={2505.22770},
      archivePrefix={arXiv},
      primaryClass={math.RT},
      url={https://arxiv.org/abs/2505.22770}, 
}

@misc{BHM2025,
      title={Transitivity of mutation of $\tau$-exceptional sequences in the $\tau$-tilting finite case}, 
      author={Aslak B. Buan and Eric J. Hanson and Bethany R. Marsh},
      year={2025},
      eprint={2506.21372},
      archivePrefix={arXiv},
      primaryClass={math.RT},
      url={https://arxiv.org/abs/2506.21372}, 
}

@misc{KaipelTerland2025,
      title={Classifying {N}akayama algebras with a braid group action on $\tau$-exceptional sequences}, 
      author={Maximilian Kaipel and Håvard Utne Terland},
      year={2025},
      eprint={2507.07608},
      archivePrefix={arXiv},
      primaryClass={math.RT},
      url={https://arxiv.org/abs/2507.07608}, 
}

@Book{Rudakov90,
 Editor = {Rudakov, Alexei N.},
 Title = {Helices and vector bundles: seminaire {Rudakov}. {Transl}. by {A}. {D}. {King}, {P}. {Kobak} and {A}. {Maciocia}},
 FSeries = {London Mathematical Society Lecture Note Series},
 Series = {Lond. Math. Soc. Lect. Note Ser.},
 ISSN = {0076-0552},
 Volume = {148},
 Year = {1990},
 Publisher = {Cambridge University Press, Cambridge. London Mathematical Society, London},
 Language = {English},
}

@article {GorodentsevRudakov,
    AUTHOR = {Gorodentsev, Alexey L. and Rudakov, Alexei N.},
     TITLE = {Exceptional vector bundles on projective spaces},
   JOURNAL = {Duke Math. J.},
    VOLUME = {54},
      YEAR = {1987},
    NUMBER = {1},
     PAGES = {115--130},
}

@article {Gorodentsev1988,
    AUTHOR = {Gorodentsev, Alexey L.},
     TITLE = {Exceptional bundles on surfaces with a moving anticanonical class},
   JOURNAL = {Izv. Akad. Nauk SSSR Ser. Mat.},
    VOLUME = {52},
      YEAR = {1988},
    NUMBER = {4},
     PAGES = {740--757, 895},
}

@incollection{DlabRingel1992,
 author = {Dlab, Vlastimil and Ringel, Claus Michael},
 title = {The module theoretical approach to quasi-hereditary algebras},
 booktitle = {Representations of algebras and related topics. Proceedings of the Tsukuba international conference, held in Kyoto, Japan, 1990},
 pages = {200--224},
 year = {1992},
 publisher = {Cambridge: Cambridge University Press},
}

@article{FZ2002,
 author = {Fomin, Sergey and Zelevinsky, Andrei},
 title = {Cluster algebras. {I}: {Foundations}},
 fjournal = {Journal of the American Mathematical Society},
 journal = {J. Am. Math. Soc.},
 issn = {0894-0347},
 volume = {15},
 number = {2},
 pages = {497--529},
 year = {2002},
}

@article{RingelZhang2014,
 author = {Ringel, Claus Michael and Zhang, Pu},
 title = {From submodule categories to preprojective algebras.},
 fjournal = {Mathematische Zeitschrift},
 journal = {Math. Z.},
 issn = {0025-5874},
 volume = {278},
 number = {1-2},
 pages = {55--73},
 year = {2014},
}

@misc{Sauter2019,
      title={On the tilting complexes for the Auslander algebra of the truncated polynomial ring}, 
      author={Julia Sauter},
      year={2019},
      eprint={1907.04949},
      archivePrefix={arXiv},
      primaryClass={math.RA},
      url={https://arxiv.org/abs/1907.04949}, 
}

@book{Mitchell,
 author = {Mitchell, Barry},
 title = {Theory of categories},
 fseries = {Pure and Applied Mathematics (Academic Press)},
 series = {Pure Appl. Math., Academic Press},
 volume = {17},
 year = {1965},
 publisher = {New York {and} London: Academic Press},
 language = {English},
}

\end{document}